\newtheorem{theorem}{Theorem}[subsection]
\newtheorem{proposition}[theorem]{Proposition}
\newtheorem{corollary}[theorem]{Corollary}
\newtheorem{lemma}[theorem]{Lemma}
\theoremstyle{definition}
\newtheorem{example}[theorem]{Example}
\theoremstyle{remark}
\newtheorem{remark}[theorem]{Remark}
    \setlist{noitemsep}
                \string\usetikzlibrary{decorations.markings} to use arrows with markings}{}}{}%
\newcommand{\rmT}{\mathrm{T}}
\newcommand{\rmB}{\mathrm{B}}
\newcommand{\rmm}{\mathrm{m}}
\newcommand{\rmd}{\mathrm{d}}
\newcommand{\rmH}{\mathrm{H}}
\newcommand{\bfA}{\mathbf{A}}
\newcommand{\bfC}{\mathbf{C}}
\newcommand{\bfS}{\mathbf{S}}
\newcommand{\bfB}{\mathbf{B}}
\newcommand{\bfP}{\mathbf{P}}
\newcommand{\bfV}{\mathbf{V}}
\newcommand{\bfN}{\mathbf{N}}
\newcommand{\bfZ}{\mathbf{Z}}
\newcommand{\scrD}{\mathscr{D}}
\newcommand{\scrO}{\mathscr{O}}
\newcommand{\scrU}{\mathscr{U}}
\newcommand{\scrT}{\mathscr{T}}
\newcommand{\calS}{\mathcal{S}}
\newcommand{\calH}{\mathcal{H}}
\newcommand{\calD}{\mathcal{D}}
\newcommand{\calE}{\mathcal{E}}
\newcommand{\bbG}{\mathbb{G}}
\newcommand{\frakm}{\mathfrak{m}}
\newcommand{\fraka}{\mathfrak{a}}
\newcommand{\frakU}{\mathfrak{U}}
\newcommand{\frakV}{\mathfrak{V}}
\newcommand{\frakS}{\mathfrak{S}}
\newcommand{\into}{\hookrightarrow}
\newcommand{\opp}{\text{opp}}
\newcommand{\inv}{^{-1}}
\newcommand{\frno}{n\textsuperscript{$\circ$}}
\newcommand*\leftidx[2]{\vphantom{#2}#1\kern-\scriptspace#2}
\DeclareRobustCommand{\simto}{\mathrel{\mathpalette\@verto\sim}}
\DeclareRobustCommand{\congto}{\mathrel{\mathpalette\@verto\cong}}
\newcommand{\@verto}[2]{%
  \lower.5\p@\vbox{
    \lineskiplimit\maxdimen
    \lineskip-.5\p@
    \ialign{%
      $\m@th#1\hfil##\hfil$\crcr
      #2\crcr
      \to \crcr
    }%
  }%
}
\newcommand{\etale}{{\text{\'et}}} %
\newcommand{\fppf}{{\text{fppf}}}  %
\DeclareMathOperator{\Res}{Res} %
\DeclareMathOperator{\Image}{Im} %
\DeclareMathOperator{\Pic}{Pic} %
\DeclareMathOperator{\ad}{ad} %
\DeclareMathOperator{\End}{End} %
\DeclareMathOperator{\Fr}{Fr} %
\DeclareMathOperator{\Spec}{Spec} %
\DeclareMathOperator{\Sym}{Sym} %
\DeclareMathOperator{\Hom}{Hom} %
\DeclareMathOperator{\cHom}{\calH\mathit{om}}
\DeclareMathOperator{\cSpec}{\calS\mathit{pec}}
\DeclareMathOperator{\cEnd}{\calE\mathit{nd}}
\DeclareMathOperator{\id}{id} %
\DeclareMathOperator{\rk}{rk} %
\DeclareMathOperator{\Der}{Der} %
\DeclareMathOperator{\cDer}{\calD\mathit{er}}
\DeclareMathOperator{\Nm}{Nm}
\newcommand{\citeSP}[1]{%
\cite[\href{http://stacks.math.columbia.edu/tag/#1}{Tag~#1}]{stacks-project}}
\newcommand{\catHiggs}{\mathbf{Higgs}}
\newcommand{\catLoc}{\mathbf{LocSys}}
\newcommand{\catPic}{\mathbf{Pic}}
\newcommand{\Higgs}{\text{\normalfont Higgs}}
\newcommand{\dR}{\text{\normalfont dR}}
\newcommand{\catMod}{\mathbf{Mod}}
\newcommand{\catQCoh}{\mathbf{QCoh}}
\newcommand{\catVect}{\mathbf{Vect}}
\newcommand{\catSch}{\mathbf{Sch}}
\newcommand{\catSet}{\mathbf{Set}}
\newcommand{\cotsp}[1]{\rmT^\ast(#1)}
\newcommand{\SPtag}[1]{\href{https://stacks.math.columbia.edu/tag/#1}{Tag~#1}}
    \newcommand{\pushright}[1]{%
    \ifmeasuring@#1\else\omit\hfill$\displaystyle#1$\fi\ignorespaces}
    \newcommand{\pushleft}[1]{%
    \ifmeasuring@#1\else\omit$\displaystyle#1$\hfill\fi\ignorespaces}
\numberwithin{equation}{section}
\title{%
    A Simpson Correspondence for Abelian Varieties in characteristic $p>0$%
}
\author{Yun Hao}
\address{Freie Universit\"at Berlin, Arnimallee 3, 14195, Berlin, Germany}
\email{haoyun.math@gmail.com}
\begin{document}

\maketitle

\begin{abstract}
    Let $X/k$ be an abelian variety over an algebraically closed field $k$ of
    characteristic $p > 0$. In this paper, using the Azumaya property of the
    sheaf of crystalline differential operators and the Morita equivalence, we
    show that \'etale locally over the Hitchin base, the moduli stack of Higgs
    bundles on the Frobenius twist $X'$ is equivalent to that of local systems
    on $X$. We follow the approach of \cite{zbMATH06666980}.
\end{abstract}

\section{Introduction}
Let $X/\bfC$ be a smooth projective variety over the complex numbers. In
\cite{zbMATH00165866}, Simpson established an equivalence between the category
of local systems (vector bundles with integrable algebraic connections,
equivalently (\emph{Riemann-Hilbert correspondence}), finite dimensional
representations of the fundamental group $\pi_1(X^{\text{an}})$) and that of the
semi-stable Higgs bundles whose Chern classes are zero. The correspondence
between Higgs bundles and local systems can be viewed as a Hodge theorem for
nonabelian cohomology. The theory is hence called the \emph{non-abelian Hodge
theory}, and sometimes is called the \emph{Simpson correspondence}.

There are many attempts to search for such a correspondence in positive
characteristic. From now on, let $X/S$ be a smooth scheme over a scheme $S$ of
characteristic $p > 0$. In the work of \cite{zbMATH05277424},
\citeauthor{zbMATH05277424} established such a correspondence for nilpotent
objects. More precisely, under the assumption that a lifting  of $X'/S$ modulo
$p^2$ exists, where $X'$ is the Frobenius twist of $X$, they construct a Cartier
transform from the category of modules with flat connections nilpotent of
exponent%
\footnote{
    There is a tiny difference of conventions between the definition in
    \cite[Definition~5.6]{zbMATH03350023} and the definition used in
    \cite{zbMATH05277424}. In the latter, these modules are said to be of
    nilpotent $\leq p -1$, as they are supported on the $(p - 1)$-st
    infinitesimal neighbourhood of the zero section of the cotangent bundle,
    see \S\ref{ssec:BNR-Higgs} for how this makes sense.
}
$\leq p$ to the category of Higgs modules nilpotent of exponent $\leq p$.
There is an alternative approach to this result in \cite{zbMATH06466318}.
A generalisation of this work to higher level arithmetic differential operators
(in the sense of \cite{zbMATH00913811}) is given in \cite{zbMATH05690817}.
Some other recent related works include \cite{zbMATH06544774},
\cite{zbMATH06750889} and \cite{1705.06241}.

In his work \cite{zbMATH06666980}, \citeauthor{zbMATH06666980} gave a full
version of this correspondence for (orbi)curves $X$ over an algebraically closed
field $S = \Spec k$.
At the same time, \citeauthor{zbMATH06526258} \cite{zbMATH06526258} further
generalised this correspondence for curves but between the category of
$G$-Higgs bundels and $G$-local systems, where $G$ is a reductive group over an
algebraically closed field $k$.

In the present work, following the approach of \cite{zbMATH06666980}, using the
Azumaya property the sheaf of crystalline differential operators proved by
\citeauthor{zbMATH05578707} in \cite{zbMATH05578707}, we generalise the result
from curves to abelian varieties. To be precise, we proved the following result.

\begin{theorem}[Theorem~\ref{thm:main}]
    Let $X/k$ be an abelian variety over an algebraically closed field $k$ of
    characteristic $p > 0$. Denote by $\catHiggs_{X'/k, r}$ the stack of rank $r$
    Higgs bundles on the Frobenius twist $X'$ of $X$, and by $\catLoc_{X/k, r}$
    the stack of rank $r$ local systems on $X$. Then we have the following
    results.
    \begin{enumerate}
        \item Each of the two stacks admits a natural map to a common base
            scheme
            \[
                B'_r:= \Spec \Sym^\bullet\left(
                \bigoplus_{i = 1}^r\Gamma(X, \Sym^i\Omega_{X/k}^1)\right)^\vee.
            \]
        \item There is a closed subscheme $\widetilde{Z}$ of the cotangent
            bundle of $X'_{B'_r}/B'_r$, that is finite flat over $X_{B'_r}'$,
            and a $\bfP$-equivariant isomorphism
            \[
                C\inv_{X/k}: \bfS \times^{\bfP} \catHiggs_{X'/k, r}
                \longrightarrow \catLoc_{X/k, r}
            \]
            over $B'_r$, where $\bfP = \catPic_{\widetilde{Z}/B'_r}/B'_r$ is the
            relative Picard stack and $\bfS/B'_r$ is a $\bfP$-torsor.
        \item In particular, there is an
            \'etale surjective map $U \to B'_r$, such that
            \[
                \catHiggs_{X'/k, r} \times_{B'_r} U \simeq \catLoc_{X/k, r}
                \times_{B'_r} U.
            \]
    \end{enumerate}
\end{theorem}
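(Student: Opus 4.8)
The plan is to transport the argument of \cite{zbMATH06666980} from curves to abelian varieties, using throughout that $\Omega_{X/k}^1$ is trivial and $H^0(X,\scrO_X)=k$. First I would produce the two maps to $B'_r$ of part (1). The map $\catHiggs_{X'/k,r}\to B'_r$ is the Hitchin map, sending $(E,\theta)$ to the characteristic coefficients of $\theta$; these lie in $\bigoplus_{i=1}^r\Gamma(X',\Sym^i\Omega_{X'/k}^1)$, which because $k$ is perfect and $\Omega^1$ is trivial is canonically $B'_r$. The map $\catLoc_{X/k,r}\to B'_r$ sends a flat bundle to the characteristic coefficients of its $p$-curvature --- a Higgs field on the underlying bundle with values in the Frobenius pullback of $\Omega_{X'/k}^1$ --- which by Cartier descent again lie in $\bigoplus_{i}\Gamma(X',\Sym^i\Omega_{X'/k}^1)=B'_r$ (possibly after a Frobenius twist of the base, harmless because $k=\bar k$). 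Over $B'_r$ the universal characteristic coefficients cut out a closed subscheme $\widetilde Z$ of $\cotsp{X'_{B'_r}/B'_r}$; thanks to the trivialization of $\Omega^1$, the proof that $\widetilde Z$ is finite flat of degree $r$ over $X'_{B'_r}$ is essentially the classical Beauville--Narasimhan--Ramanan construction, the fibres $\widetilde Z_a$ being translation-invariant spectral schemes $X'\times(\text{a length-}r\text{ subscheme of the cotangent fibre})$. One then sets $\bfP=\catPic_{\widetilde Z/B'_r}$.

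For part (2) I would realise both stacks over $B'_r$ as stacks of sheaves on $\widetilde Z$. On the Higgs side, the Beauville--Narasimhan--Ramanan correspondence identifies $\catHiggs_{X'/k,r}$ with the stack of rank-$1$ sheaves on $\widetilde Z$ flat over $B'_r$, on which $\bfP$ acts by tensoring. On the flat-connection side I would invoke the Azumaya property of the sheaf of crystalline differential operators \cite{zbMATH05578707}: $\scrD_{X_{B'_r}/B'_r}$ is Azumaya over $\cotsp{X'_{B'_r}/B'_r}$ of degree $p^{\dim X}$, hence its restriction $\mathcal A:=\scrD_{X_{B'_r}/B'_r}|_{\widetilde Z}$ is Azumaya over $\widetilde Z$; since a flat bundle is, via its $p$-curvature, a $\scrD_{X/k}$-module supported on its characteristic subscheme, the $p$-curvature identifies $\catLoc_{X/k,r}$ with the stack of $\mathcal A$-modules that correspond, under Morita equivalence, to rank-$1$ sheaves on $\widetilde Z$, again with $\bfP$ acting by tensoring. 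Consequently each splitting $\mathcal A\simto\cEnd_{\scrO_{\widetilde Z}}(W)$ produces, via Morita followed by the Beauville--Narasimhan--Ramanan correspondence, a $\bfP$-equivariant equivalence $\catLoc_{X/k,r}\simeq\catHiggs_{X'/k,r}$ over $B'_r$; tensoring $W$ by a line bundle on $\widetilde Z$ changes this equivalence simply transitively, so the splittings of $\mathcal A$ organise into a $\bfP$-pseudotorsor $\bfS\to B'_r$, and $C\inv_{X/k}$ is the resulting tautological equivalence $\bfS\times^{\bfP}\catHiggs_{X'/k,r}\to\catLoc_{X/k,r}$.

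The heart of the matter, and the step I expect to be the genuine obstacle, is to show that $\bfS$ is an honest $\bfP$-torsor, i.e.\ that $\mathcal A$ splits \'etale locally over $B'_r$: concretely, one must exhibit a locally free rank-$p^{\dim X}$ splitting module $W$ of $\mathcal A$ over $\widetilde Z_U$ for some \'etale surjection $U\to B'_r$. The natural candidate for $W$ is the relative Frobenius push-forward, along $X_U\to X'_U$, of the tautological rank-$r$ flat bundle attached to the spectral scheme: the rank-$1$ locally free $\scrO_{\widetilde Z_U}$-module carrying the canonical connection whose $p$-curvature realises the tautological section of $\cotsp{X'_U/U}$. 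Proving that this $W$ is locally free of the right rank and that the structure map $\mathcal A|_{\widetilde Z_U}\to\cEnd_{\scrO_{\widetilde Z_U}}(W)$ is an isomorphism is where the technical weight lies, and it is precisely here that abelianness enters decisively: on an abelian variety translation-invariant $1$-forms are closed (the Maurer--Cartan structure constants vanish), which is what allows the tautological connection to be flat and the Frobenius push-forward to be well behaved in families, and this is also the point at which the Frobenius-twist bookkeeping between the two copies of $B'_r$ must be made precise. I would also observe that these \'etale-local splittings need not glue to a global one --- consistently with the failure of homotopy invariance for the $p$-primary Brauer group --- which is exactly why the equivalence in part (3) is only \'etale local.

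Finally, granting part (2), part (3) is formal: $\bfP=\catPic_{\widetilde Z/B'_r}$ is a smooth group stack (the spectral schemes in play are built from the abelian variety $X'$, so line bundles on them deform without obstruction), hence any $\bfP$-torsor admits a section over some \'etale surjective base change $U\to B'_r$; over such a $U$ the isomorphism $C\inv_{X/k}$ restricts to an equivalence $\catHiggs_{X'/k,r}\times_{B'_r}U\simeq\catLoc_{X/k,r}\times_{B'_r}U$.
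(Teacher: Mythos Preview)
Your outline follows the paper's broad strategy, but it contains a structural error about $\widetilde Z$ that propagates through the rest of the argument, and the decisive step (existence of \'etale-local splittings) is handled differently in the paper.

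\textbf{The spectral cover is not what you think.} You define $\widetilde Z$ as the zero locus of the universal characteristic polynomial and assert it is finite flat of degree $r$ over $X'_{B'_r}$. This is the curve picture and it fails for $\dim X = d > 1$. The characteristic polynomial $\chi_\theta(\lambda)$ is a section of $\pi^\ast\Sym^r\Omega^1$, so its vanishing locus $Z_\chi$ is cut out by $\binom{d+r-1}{r}$ equations in $\scrO_{X'}[\partial_1,\dots,\partial_d]$; for $d>1$ and $r>1$ this is in general not flat over $X'$ and can even be empty (Remark~\ref{rmk:can-be-empty-non-flat}). The paper's $\widetilde Z$ is a \emph{strictly larger} closed subscheme, obtained by keeping only the $d$ ``diagonal'' equations $g_i(\partial_i)$ (each monic of degree $r$ in a single variable), so that $\widetilde Z_\chi \simeq X' \times \Spec k[\partial_1,\dots,\partial_d]/(g_1,\dots,g_d)$ is finite flat of degree $r^d$, not $r$. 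Consequently your BNR statement (``$\catHiggs_{X'/k,r}$ is the stack of rank-$1$ sheaves on $\widetilde Z$'') is false: a rank-$1$ sheaf on $\widetilde Z$ pushes down to rank $r^d$. The correct correspondence (Propositions~\ref{prop:Higgs-spectral-cover} and~\ref{prop:local-system-spectral-cover}) identifies Higgs bundles and local systems with certain quasi-coherent sheaves on the smaller $Z_\chi\subset\widetilde Z_\chi$, and $\catPic_{\widetilde Z/B'}$ acts on both by tensoring, without those sheaves being line bundles on $\widetilde Z$.

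\textbf{The splitting step.} Your proposed splitting module (Frobenius pushforward of a ``tautological rank-$r$ flat bundle'') is not how the paper proceeds, and as stated it is unclear how to make it work in families over all of $\widetilde Z$. The paper instead reduces to rank $1$: since $k=\bar k$, each $g_i$ factors into linear terms, so every closed fibre $\widetilde Z_\chi$ is a union of infinitesimal thickenings of graphs of global $1$-forms on $X'$. For a single $1$-form the stack of splittings of $\scrD_{X/k}$ over its graph is identified with $\catPic^\natural_{X/k}$ (Corollary~\ref{cor:pic-equiv-to-splitting}), and the key input is that $c_{\dR}\colon\catPic^\natural_{X/k}\to B'_1$ is \emph{smooth and surjective} (Proposition~\ref{prop:smooth-and-surjective}); this uses surjectivity of $\id-(w^{-1})^\ast C_{X/k}$ on invariant forms and of $\rmd c_{\dR}$ on $\rmH^1_{\dR}$, both special to abelian varieties. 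Formal smoothness then lifts splittings to formal neighbourhoods of $1$-form graphs, hence to every $\widetilde Z_\chi$ (Corollary~\ref{cor:splits-in-general}), and since $\widetilde Z/B'$ is proper flat the Weil restriction $\bfS$ is algebraic and smooth surjective over $B'$, hence a genuine $\bfP$-torsor. Your argument for part~(3) via smoothness of $\bfP$ is plausible, but the paper gets it more directly from smoothness and surjectivity of $\bfS\to B'$ itself.
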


The idea of the proof is as follows. We know from \cite{zbMATH05578707} that the
sheaf of crystalline differential operators defines an Azumaya algebra
$\scrD_{X/k}$ over the cotangent bundle $\cotsp{X'/k}$ of $X'$
(Theorem~\ref{thm:azumaya-property}). We can also view Higgs bundles on $X'$ as
quasi-coherent $\scrO$-modules on the cotangent bundle $\cotsp{X'/k}$, supported
on a closed subscheme called the \emph{spectral cover}
(\S\ref{ssec:spectral-cover} and Proposition~\ref{prop:Higgs-spectral-cover}).
Meanwhile, local systems on $X$ can be identified with certain
$\scrD_{X/k}$-modules on the cotangent bundle $\cotsp{X'/k}$, via their
$p$-curvatures (Proposition~\ref{prop:local-system-spectral-cover}). If there is
a splitting of the Azumaya algebra $\scrD_{X/k}$ on the spectral cover, then the
Morita theory will give an equivalence between these $\scrO$-modules and
$\scrD_{X/S}$-modules on $\cotsp{X'/k}$
(Proposition~\ref{thm:splitting-principle}).

The existence of splittings in the curve case is guaranteed by Tsen's theorem
(see \cite[\S3.4]{zbMATH06666980}). For abelian varieties, the existence of
splittings of $\scrD_{X/S}$ over (the formal neighbourhood of) a spectral cover,
is obtained by observing that the stack of splittings of $\scrD_{X/S}$ is
equivalent to the stack $\catPic_{X/k}^\natural$ of line bundles with a flat
connection on $X$ (Proposition~\ref{cor:pic-equiv-to-splitting}), and that
$\catPic_{X/k}^\natural/B'_1$ is \emph{smooth}.

One of the main differences between the curve case and the higher dimensional
case is that in the latter case, the Hitchin map is not surjective and the
spectral cover is \emph{not flat} over $X'$ in general
(Proposition~\ref{prop:proper-of-universal-spectral-curve} and
Remark~\ref{rmk:can-be-empty-non-flat}). However, since the cotangent bundle of
an abelian variety is trivial, for any given spectral cover, we can construct a
larger cover, that is finite, flat, locally of finite presentation over $X'$
(Example~\ref{exa:computation-av}). This larger cover will play an important
role in the construction and the proof.

Actually the proof is rather formal. We indeed prove some results in a more
general setting, using the language of $(k, R)$-Lie algebras (reviewed in
\S\ref{sec:local}) and Lie algebroids (developed in \S\ref{sec:D}). Many of
results have already been studied in \cite{zbMATH06358801}, but the
corresponding spectral data were not considered there. By working in such a
general setting, we can see that some of the known results (e.g.,
Proposition~\ref{prop:reduced-char-polyn}) also hold for flat
$\lambda$-connections (cf. \cite[Prop.~3.2]{zbMATH01588211}), not only for flat
$1$-connections, which is in fact a consequence of an easy observation
Lemma~\ref{lem:char-descent}. It allows us to have a more abstract way to look
at these results. Moreover, the Azumaya property of the crystalline differential
operator, proven in \cite{zbMATH05578707} and \cite{zbMATH03417620}, is reproved
in detail in this article.

\subsection*{Acknowledgement}
The author is indebted to Michael Groechenig for providing him the problem
worked out in this article, and for multiple enlightening discussions. I also
benefited from discussions with H\'el\`ene Esnault, Lei Zhang and Hao Zhang are
also very helpful. The author also thanks the Berlin Mathematical School for the
financial support during his study.

\section{Generalities}

\paragraph{Conventions:}
\begin{itemize}[wide]
    \item Unadorned tensor products, $\Hom$ or $\cHom$'s, symmetric powers and
        exterior powers are taken over the structure sheaf $\scrO_X$ of a base
        scheme or over a base ring $R$, which will be clear from context.
    \item Given a scheme $X$ and a quasi-coherent $\scrO_X$-algebra $A$, $\cSpec
        A$ stands for the relative (or global) spectrum. For any quasi-coherent
        $\scrO_X$-module $E$ of over a scheme $X$, set $\bfV(E):=\cSpec \Sym
        ^\bullet E^\vee$. This convention differs from that in
        \cite[\S1.7]{EGAII} by a dual.
    \item For two natural numbers $r$ and $d$, set $S(r,d):=\binom{d+r-1}{r}$.
\end{itemize}

\subsection{Frobenius}
\label{ssec:frobenius}
Fix a scheme $S$ of characteristic $p>0$ and let $f: X \to S$ be an $S$-scheme.
Denote by $\Fr_S$ (resp.\ $\Fr_X$) the \emph{absolute Frobenius} on $S$ (resp.\
$X$), by $X':= X^{(p)}:= X \times_{S, \Fr_S} S$ the \emph{Frobenius twist} of
$X$ over $S$, and by $F_{X/S}:=\Fr_{X/S} = (\Fr_X, f): X \to X'$ the
\emph{relative Frobenius}. In case $X= \Spec R$ and $S = \Spec k$ are both
affine, where $k$ is a commutative ring of characteristic $p > 0$ and $R$ a
$k$-algebra, we can introduce corresponding maps on their coordinate rings. In
other words, we have the following commutative diagrams:%
\footnote{
    Here we use the same notations $f$, $f'$ and $w$ to denote different maps.
    However, the meaning of these maps can be inferred from context.
}
\begin{equation}
    \begin{tikzcd}[column sep = large]
        X \ar[r, "F_{X/S}"] \ar[dr, "f"']
        & X' \ar[r, "w"] \ar[d, "f'"] \ar[dr, "\square" description, phantom]
        & X \ar[d, "f"] \ar[from = ll, bend left = 45, "\Fr_X", crossing over]
        \\
        & S \ar[r, "\Fr_S"] & S,
    \end{tikzcd}\qquad
    \begin{tikzcd}[column sep = large]
        R & R' \ar[l, "\varphi_{R/k}"']
        \ar[dr, "\square" description, phantom]
        & R\ar[l, "w"'] 
        \ar[ll, bend right= 45, "\varphi_R"', crossing over]
        \\
        & k \ar[ul, "f"] \ar[u, "f'"]
        & k \ar[l, "\varphi_k"'] \ar[u, "f"].
    \end{tikzcd}
    \label{eq:notation-Frob}
\end{equation}

Recall that we have the following facts about the relative Frobenius morphism.

\begin{proposition} \label{lem:norm-of-Frobenius}
    Suppose that $f:X \to S$ is smooth of relative dimension $d > 0$. Then the
    followings hold.
    \begin{enumerate}[wide]
        \item The relative Frobenius $F_{X/S}$ is a universal homeomorphism and
            is \emph{finite locally free} (in the sense of
            \cite[\SPtag{02K9}]{stacks-project}) of rank $p^d$. In particular,
            it is faithfully flat.
        \item Denote by $ \Nm_{F_{X/S}}: (F_{X/S})_\ast \scrO_{X} \to
            \scrO_{X'}$ the \emph{norm map} (see e.g.,
            \cite[\SPtag{0BD2}]{stacks-project}). Then the composition
            \[\begin{tikzcd}
                    (F_{X/S})_\ast \scrO_{X} \ar[r, "\Nm_{F_{X/S}}"]
                    & \scrO_{X'} \ar[r, "F_{X/S}^\natural"]
                    & (F_{X/S})_\ast \scrO_{X}
            \end{tikzcd}\]
            is the $p^d$-th power map, i.e., for any open subset $U \subseteq
            X'$ and any section $g \in (F_{X/S, \ast}\scrO_X)(U) =
            \scrO_X(F_{X/S}\inv U)$, we have $F_{X/S}^\natural\Nm_{F_{X/S}}(g) =
            g^{p^d} \in \scrO_X(F_{X/S}\inv U)$.
    \end{enumerate}
\end{proposition}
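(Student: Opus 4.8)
The plan is to dispatch the two parts separately: part~(1) is formal once one has the computation for affine space, while part~(2) needs a short but genuine argument. For part~(1): the assertions are local on $S$ and on $X'$, so we may assume $S = \Spec k$ and, by the local structure of smooth morphisms, that $X$ admits an \'etale morphism $h\colon X \to \bfA^d_S$. For \'etale $h$ the square
\[
    \begin{tikzcd}
        X \ar[r, "h"] \ar[d, "F_{X/S}"'] & \bfA^d_S \ar[d, "F_{\bfA^d_S/S}"] \\
        X' \ar[r] & (\bfA^d_S)'
    \end{tikzcd}
\]
is cartesian, so $F_{X/S}$ is a base change of $F_{\bfA^d_S/S}$; and a direct computation shows that $F_{\bfA^d_S/S}^\natural$ exhibits $\scrO_{\bfA^d_S}$ as a free $\scrO_{(\bfA^d_S)'}$-module on the monomials $t_1^{a_1}\cdots t_d^{a_d}$ with $0 \le a_i \le p-1$. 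Hence $F_{X/S}$ is finite locally free of rank $p^d$, in particular flat and affine; it is surjective because its rank is $p^d > 0$, and universally injective because $\Fr_X = w\circ F_{X/S}$ is (universal injectivity of a composite descends to the first factor). Therefore $F_{X/S}$ is a universal homeomorphism, and being flat and surjective it is faithfully flat.

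For part~(2) the assertion is local on $X'$, so we work over an affine open and must show $F_{X/S}^\natural\Nm_{F_{X/S}}(g) = g^{p^d}$ for $g \in (F_{X/S})_\ast\scrO_X$. We apply the norm to the cartesian square
\[
    \begin{tikzcd}
        X \times_{X'} X \ar[r, "\pi_2"] \ar[d, "\pi_1"'] & X \ar[d, "F_{X/S}"] \\
        X \ar[r, "F_{X/S}"] & X'
    \end{tikzcd}
\]
in which $\pi_1$ is again finite locally free of rank $p^d$. Compatibility of the norm with base change gives
\[
    F_{X/S}^\natural\Nm_{F_{X/S}}(g) = \Nm_{\pi_1}(\pi_2^\natural g) = \det\nolimits_{\scrO_X}\bigl(m_{\pi_2^\natural g}\bigr),
\]
the determinant of multiplication by $\pi_2^\natural g$ on the free rank-$p^d$ $\scrO_X$-module $(\pi_1)_\ast\scrO_{X\times_{X'}X}$, whose $\scrO_X$-structure is via $\pi_1^\natural$.

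Write $\pi_2^\natural g = \pi_1^\natural g + \delta$ with $\delta := \pi_2^\natural g - \pi_1^\natural g$. Applying $\Delta^\natural$ for the diagonal $\Delta\colon X \to X\times_{X'}X$ yields $\Delta^\natural\delta = g - g = 0$, so $\delta$ lies in the diagonal ideal $J = \ker\Delta^\natural$; moreover $\delta^p = \pi_2^\natural(g^p) - \pi_1^\natural(g^p) = 0$, since $g^p = \Fr_X^\natural(g) = F_{X/S}^\natural(w^\natural g)$ already lies in the image of $F_{X/S}^\natural$ by \eqref{eq:notation-Frob}, whence $\pi_1^\natural(g^p) = \pi_2^\natural(g^p)$. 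As $\pi_1^\natural g$ acts as the scalar $g \in \scrO_X$, we have $m_{\pi_2^\natural g} = g\cdot\mathrm{id} + m_\delta$, so the characteristic polynomial of $m_{\pi_2^\natural g}$ over $\scrO_X$ equals $\chi_{m_\delta}(T - g)$, and it remains to prove $\chi_{m_\delta}(T) = T^{p^d}$. For this we use an explicit local model: Zariski-locally on $X$ (namely where $X$ is \'etale over $\bfA^d_S$) one computes $X\times_{X'}X \cong \Spec\bigl(\scrO_X[u_1,\dots,u_d]/(u_1^p,\dots,u_d^p)\bigr)$ with $\Delta$ the zero section, so $J = (u_1,\dots,u_d)$ and $(\pi_1)_\ast\scrO_{X\times_{X'}X}$ is free on the monomials $u^{\underline a}$, $0\le a_i\le p-1$. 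As $\delta \in J$, multiplication by $\delta$ carries each such monomial to a combination of monomials of strictly greater total degree; ordering the monomial basis by total degree, the matrix of $m_\delta$ is strictly triangular, hence $\chi_{m_\delta}(T) = T^{p^d}$. Consequently the characteristic polynomial of $m_{\pi_2^\natural g}$ is $(T-g)^{p^d} \in \scrO_X[T]$, its constant term is $(-1)^{p^d}g^{p^d}$, and therefore $F_{X/S}^\natural\Nm_{F_{X/S}}(g) = \Nm_{\pi_1}(\pi_2^\natural g) = g^{p^d}$.

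The routine-looking step that actually requires care is the identity $\chi_{m_\delta}(T) = T^{p^d}$: one cannot merely say ``$m_\delta$ is nilpotent, hence $\chi_{m_\delta}(T) = T^{p^d}$'', because the characteristic polynomial of a nilpotent endomorphism of a finite free module need not be $T^n$ over a non-reduced ring, and $S$ is arbitrary here. The explicit description of $X\times_{X'}X$, in which $m_\delta$ becomes visibly strictly triangular, is precisely what circumvents this.
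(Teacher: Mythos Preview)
Your proof is correct. Part~(1) follows the same path as the paper: reduce via \'etale coordinates to $\bfA^d_S$, where $F_{\bfA^d_S/S}$ is visibly finite free of rank $p^d$ on the standard monomial basis, and then pull the conclusion back along the cartesian square.

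For part~(2) the paper takes a different route. After the same reduction to affine space, the paper computes the norm directly by induction on $d$, invoking the transitivity formula for norms in a tower of finite free extensions (\cite[Chapitre~III, \S9, \frno4, Proposition~6]{zbMATH05069335}): one factors $k[t_1,\dots,t_d]$ over $k[t_1^p,\dots,t_d^p]$ through intermediate rings adding one $t_i$ at a time, and applies the $d=1$ case repeatedly. Your argument instead uses base-change compatibility of the norm to pass to $X\times_{X'}X$, identifies this fibre product locally with $\Spec\scrO_X[u_1,\dots,u_d]/(u_1^p,\dots,u_d^p)$, and reads off the characteristic polynomial from the strict triangularity of $m_\delta$ in the monomial basis graded by total degree. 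Your approach avoids the induction and makes transparent why the answer is a pure $p^d$-th power (and your closing remark about why mere nilpotence of $m_\delta$ would not suffice over a non-reduced base is a genuine point); the paper's approach is shorter to state but leaves the verification to the reader. Both ultimately rest on the same \'etale-local description coming from part~(1), so the difference is really in how the affine-space computation is organised.
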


\begin{proof}
    Since $X/S$ is smooth of relative dimension $d$, Zariski locally we have the
    following commutative diagram (\cite[\SPtag{054L}]{stacks-project}):
    \begin{equation} \label{eq:etale-coordinate}
        \begin{tikzcd}
            X \ar[d, "f"'] \ar[from = r, open']
            & U \ar[r, "\text{\'etale}", "g"'] \ar[d, "f|_U"']
            & \bfA_V^d. \ar[dl] \\
            S \ar[from = r, open'] & V.
        \end{tikzcd}
    \end{equation}
    where $U$ and $V$ are (affine) opens in $X$ and $S$ respectively. We then
    have $F_{U/V} =  g^\ast (F_{\bfA_V^d/V}) \circ F_{U/\bfA_V^d}$. Recall that
    $g$ being \'etale implies that $F_{U/\bfA_{V}^d}$ is an isomorphism
    (\cite[\SPtag{0EBS}]{stacks-project}), so we reduce the problem to check
    both statements for  $\bfA_V^d/V$. So without loss of generality, write $S =
    \Spec A$ and $X = \Spec A[t_1, \ldots, t_d]$. Then the two statements
    follows from easy verification (possibly by an induction on the dimension
    $d$ and by applying the norm formula for successive finite finite free
    extensions \cite[Chaptitre~III, \S9, \frno4, Propostion~6]{zbMATH05069335}).
\end{proof}

\subsection{Azumaya algebras and the Morita equivalence}
\label{ssec:azumaya-morita}

Fix a base scheme $(X, \scrO_X)$. Recall that
An \emph{Azumaya algebra} (or a \emph{central separable algebra})
$A$ over $X$ is a sheaf of $\scrO_X$-algebras (not necessarily commutative),
such that there is an \'etale surjective map
(or equivalently, an fppf map; see \cite[Prop.~IV.2.1]{zbMATH03674235})
$g: U \to X$, such that
\[
    g^\ast A \simeq \cEnd_{\scrO_U}(\scrO_U^{\mathop{\oplus}r})
\]
for some $r \in \bfN$. This is a generalisation of \emph{central simple
algebras} over a field $k$. Clearly, an Azumaya algebra is locally free of rank
$r^2$ (see e.g., \cite[\SPtag{05B2}]{stacks-project}). Given an Azumaya algebra
$A$ over $X$, a \emph{splitting} of $A$ consists of a morphism $g: T \to X$,
a locally free $\scrO_T$-module $P$, and an isomorphism $\alpha: g^\ast A \simto
\cEnd_{\scrO_T}(P)$. Splittings of an Azumaya algebra form a $\bbG_{\rmm}$-gerbe
over $(\catSch/X)$ (see~\cite[\S\S12.3.5 and 12.3.6]{zbMATH06561920} or
\cite[Chapitre V, \S4.2]{zbMATH03358638}).

Given a splitting a splitting $(T, P, \alpha)$ of an Azumaya algebra $A$, recall
that the Morita theory asserts that the functor $\catMod_{\scrO_X} \to
\catMod_{A}$, $E \mapsto P \otimes E$ defines an equivalence between the
category of $\scrO_T$-modules and that of left $A$-modules. See for example
\cite[(8.12)]{zbMATH05585185}.

\subsection{Characteristic polynomial of a twisted endomorphism}
\label{ssec:characteristic-polynomial}
Let $(X, \scrO_X)$ be a fixed scheme. Similar discussions can be found in
\cite[\S6.4]{EGAII}.

Let $E, K$ be two \emph{finite locally free} $\scrO_X$-modules and suppose that
$\rk(E) = r$. Consider a homomorphism $\varphi: E \to E \otimes K$ of
$\scrO_X$-modules, called a ($K$-)\-\emph{twisted endomorphism} of $E$. We will
also view $\varphi$ as an element in $\Gamma(X, \cEnd(E) \otimes K)$.
We have the canonical inclusion to degree $1$ map $K \to \Sym^\bullet K$. So any
twisted endomorphism $\varphi$ gives rise to an $\scrO_X$-linear map $E \to E
\otimes \Sym^\bullet K$. Then using the adjoint pair
\[
    \Hom_{\scrO_X}(E, E \otimes \Sym^\bullet K) \simeq
    \Hom_{\Sym^\bullet K}(E \otimes \Sym^\bullet K, E \otimes \Sym^\bullet K),
\]
we can view $\varphi$ as an $(\Sym^\bullet K)$-endomorphism of the locally free
module $E \otimes \Sym^\bullet K$. So it has an obviously well-defined
\emph{characteristic polynomial} $\chi_\varphi \in \Gamma(X, \Sym^\bullet K[t])$
in the usual sense. We usually write $\chi_\varphi(t) = t^r + \cdots + (-1)^i
a_it^i + \cdots + (-1)^ra_n$, where $a_i \in \Gamma(X, \Sym^i K)$.

Finally, substituting $t$ by $\varphi$ in $\chi_\varphi$,
we get an $\scrO_X$-linear map
\[
    \chi_\varphi(\varphi) = \sum_{i = 0}^r (-1)^i a_i \varphi^{r-i}:
    E \to E \otimes \Sym^r K,
\]
where $\varphi^i$ is the $\scrO_X$-linear map $E \to  E \otimes \Sym^i K$
obtained by composition
\[\begin{tikzcd}[column sep = small]
        E \ar[r, "\varphi"] & E \otimes K \ar[r, "\varphi \otimes \id"] &
        E \otimes K^{\otimes 2} \ar[r] & \cdots \ar[r] & E \otimes K^{\otimes i}
        \ar[r, twoheadrightarrow] & E \otimes \Sym^i K.
\end{tikzcd}\]
Now as in the classical case, the Cayley-Hamilton theorem reads as follows.

\begin{proposition} \label{prop:C-H}
    The map
    \begin{equation}
        \chi_\varphi(\varphi) : E \longrightarrow E \otimes \Sym^r K
        \label{eq:C-H}
    \end{equation}
    is the \emph{zero} map.
\end{proposition}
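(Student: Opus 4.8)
The plan is to reduce this to the classical Cayley–Hamilton theorem for a genuine (untwisted) endomorphism over a commutative ring. As explained in the paragraph preceding the statement, the twisted endomorphism $\varphi \colon E \to E \otimes K$ gives rise, via the inclusion $K \hookrightarrow \Sym^\bullet K$ and the adjunction, to an honest $(\Sym^\bullet K)$-linear endomorphism
\[
    \widetilde{\varphi} \colon E \otimes \Sym^\bullet K \longrightarrow E \otimes \Sym^\bullet K
\]
of the finite locally free $(\Sym^\bullet K)$-module $E \otimes \Sym^\bullet K$, and its characteristic polynomial $\chi_{\widetilde{\varphi}}(t) \in \Gamma(X, \Sym^\bullet K)[t]$ is by definition $\chi_\varphi(t)$. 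The classical Cayley–Hamilton theorem (valid for endomorphisms of finite locally free modules over any commutative base, e.g. by localizing to the affine free case and invoking the determinant-trick / adjugate identity, see \cite[\S6.4]{EGAII}) then gives $\chi_{\widetilde{\varphi}}(\widetilde{\varphi}) = 0$ as an endomorphism of $E \otimes \Sym^\bullet K$.

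The only remaining point is bookkeeping about the grading: I must check that the composite $\chi_\varphi(\varphi) = \sum_{i=0}^r (-1)^i a_i \varphi^{r-i}$ defined in the excerpt, landing in $\Hom_{\scrO_X}(E, E \otimes \Sym^r K)$, agrees with the degree-$r$ component of $\chi_{\widetilde{\varphi}}(\widetilde{\varphi})$ evaluated on $E = E \otimes \Sym^0 K \subseteq E \otimes \Sym^\bullet K$. This is immediate from unwinding the definitions: under the adjunction, $\varphi^{j}$ is precisely $\widetilde{\varphi}^{j}$ restricted to $E$ and then projected to the $\Sym^j K$-graded piece (the intermediate surjections $K^{\otimes j} \twoheadrightarrow \Sym^j K$ are exactly the multiplication maps in $\Sym^\bullet K$), and $a_i \in \Gamma(X, \Sym^i K)$ acts by multiplication raising degree by $i$, so the term $a_i \widetilde{\varphi}^{r-i}$ sends $E$ into $E \otimes \Sym^i K \otimes \Sym^{r-i} K \to E \otimes \Sym^r K$. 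Thus $\chi_\varphi(\varphi)$ is the restriction to $E$ of $\chi_{\widetilde{\varphi}}(\widetilde{\varphi})$, which vanishes; and since $E$ generates $E \otimes \Sym^\bullet K$ over $\Sym^\bullet K$, nothing is lost.

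I do not anticipate a serious obstacle here; the statement is formal once the translation is in place. If one wanted to be fully self-contained one could instead argue directly: the assertion is local on $X$, so assume $X = \Spec R$ with $E$ free of rank $r$; then $E \otimes \Sym^\bullet K$ is free over the commutative ring $\Sym^\bullet K(R)$, and $\widetilde{\varphi}$ is represented by an $r \times r$ matrix with entries in $\Sym^1 K(R)$, to which the usual adjugate identity $\mathrm{adj}(tI - M)\cdot(tI - M) = \chi_M(t) I$ applies after substituting $t = M$. The mild care needed — and essentially the only thing to watch — is that $\varphi$ is not an endomorphism of $E$ but a $K$-twisted one, so "substituting $\varphi$ for $t$" only makes sense after passing to $\Sym^\bullet K$; the proposition is exactly the statement that this substitution behaves as expected.
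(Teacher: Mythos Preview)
Your proposal is correct and follows exactly the approach the paper sets up in the paragraph preceding the statement (passing to the $\Sym^\bullet K$-endomorphism $\widetilde{\varphi}$ and invoking the classical Cayley--Hamilton theorem, cf.\ the reference to \cite[\S6.4]{EGAII}). The paper in fact omits the proof entirely, treating the result as standard; your write-up faithfully supplies the details implicit in that omission.
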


\subsection{Affine morphisms and finite morphisms}
\label{ssec:affine-morphism}
Recall the following standard results that we will use repeatedly (see e.g.,
\cite[\S\S1.3--1.4, 1.7, 6.1]{EGAII}).

\begin{proposition}\label{prop:affine-morphisms}
    \begin{enumerate}[wide]
        \item \label{prop:affine-morphisms-1}
            Let $f: Y \to X$ be an affine morphism, then the direct image
            functor $f_\ast: \catQCoh(\scrO_Y) \simeq \catQCoh(\scrO_X, A)$, $N
            \mapsto f_\ast N$, where $\catQCoh(\scrO_X, A)$ is the category
            $A$-modules which are $\scrO_X$-quasicoherent. defines an
            equivalence of categories, whose quasi-inverse is denote by $M
            \mapsto \widetilde{M}$.

        \item \label{prop:affine-morphisms-2}
            Let $X$ be a scheme and $E$ a locally free sheaf of finite rank on
            $X$. For any morphism $f: T \to X$, we have
            \begin{equation}
                \Hom_X(T, \bfV(E))
                = \Hom_{\catMod_{\scrO_X}}(E^\vee, f_\ast \scrO_T) =
                \Gamma(T, f^\ast E).
                \label{eq:section-of-vector-bundles}
            \end{equation}
            That's the reason why we define $\bfV(E)$ as the spectrum of the
            symmetric algebra of its \emph{dual}: the sections of $\bfV(E)$ are
            sections of $E$, with our notation.
        \item \label{prop:affine-morphisms-3}
            \emph{(\cite[Prop.~6.1.12]{EGAII},
            \cite[Prop.~12.13]{zbMATH05585185})} Let $f: Y \to X$ be a
            finite morphism. Then the equivalence in
            \ref{prop:affine-morphisms-1} induces an equivalence of categories%
            \begin{equation}
                f_\ast: \catVect_m(Y) \longrightarrow
                \catVect_m(X, f_\ast \scrO_Y)
                \label{eq:equivalence-finite-locally-free}
            \end{equation}
            where $\catVect_m(X)$ stands for the category of locally free
            sheaves of rank $m$ on $X$ and $\catVect_m(X, f_\ast \scrO_Y)$
            stands for that of locally free $f_\ast \scrO_Y$-modules on $X$.
    \end{enumerate}
\end{proposition}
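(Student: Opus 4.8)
All three statements are classical and contained in \cite[\S\S1.3--1.4, 1.7, 6.1]{EGAII}; here is the plan I would follow.

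For \ref{prop:affine-morphisms-1}, the assertion is local on $X$, so I would reduce to $X = \Spec R$ and $Y = \Spec A$, with $A$ the (arbitrary) quasi-coherent $R$-algebra representing the affine morphism $f$. In that situation $\catQCoh(\scrO_Y)$ is the category of $A$-modules, while $\catQCoh(\scrO_X, A)$ is the category of $R$-modules equipped with a compatible $A$-action; these are tautologically the same category, with $f_\ast$ the functor that remembers the $A$-action and $M \mapsto \widetilde M$ the passage from an $A$-module to its associated sheaf on $Y$. The only thing left to verify is that these local quasi-inverses glue, which holds because the formation of $\widetilde M$ commutes with localisation.

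For \ref{prop:affine-morphisms-2}, I would concatenate three universal properties. Since $\bfV(E) = \cSpec \Sym^\bullet E^\vee$ is affine over $X$, the universal property of the relative spectrum gives $\Hom_X(T, \bfV(E)) = \Hom_{\scrO_X\text{-alg}}(\Sym^\bullet E^\vee, f_\ast \scrO_T)$; the universal property of the symmetric algebra (restriction to degree $1$) identifies this with $\Hom_{\catMod_{\scrO_X}}(E^\vee, f_\ast \scrO_T)$; and since $E$ is finite locally free, $E^\vee$ is dualisable with dual $E$, so this equals $\Gamma\bigl(X, \cHom_{\scrO_X}(E^\vee, f_\ast \scrO_T)\bigr) = \Gamma(X, E \otimes f_\ast \scrO_T)$, which by the projection formula $E \otimes f_\ast\scrO_T \cong f_\ast f^\ast E$ (valid for $E$ finite locally free) is $\Gamma(T, f^\ast E)$.

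For \ref{prop:affine-morphisms-3}, a finite morphism is in particular affine, so \ref{prop:affine-morphisms-1} already provides the equivalence $\catQCoh(\scrO_Y) \simeq \catQCoh(\scrO_X, f_\ast\scrO_Y)$; it then remains to check that $f_\ast$ carries locally free $\scrO_Y$-modules of rank $m$ exactly onto locally free $f_\ast\scrO_Y$-modules of rank $m$. Working Zariski-locally on $X$, write $X = \Spec R$, $Y = \Spec A$ with $A$ finite over $R$, and let $M$ be a finite $A$-module. If $\widetilde M$ is locally free of rank $m$ over $\scrO_Y$, i.e. $M_{\mathfrak q}$ is $A_{\mathfrak q}$-free of rank $m$ for every prime $\mathfrak q \subset A$, then for every prime $\mathfrak p \subset R$ and every $\mathfrak q$ lying over $\mathfrak p$ the module $(M_{\mathfrak p})_{\mathfrak q}$ is $A_{\mathfrak q}$-free of rank $m$, and one can refine this to a Zariski cover of $\Spec R$ over which $M$ becomes $A$-free of rank $m$; the converse is the same stalk comparison read backwards, and I would cite \cite[Prop.~6.1.12]{EGAII} (see also \cite[Prop.~12.13]{zbMATH05585185}) for the details. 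The one point requiring a little care is precisely this last passage between a trivialising cover of $Y$ and one of $X$, which is where finiteness of $f$ — not merely affineness — enters; everything else is a formal juggling of universal properties and of localisation.
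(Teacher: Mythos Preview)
Your proposal is correct and in fact more detailed than what the paper provides: the paper gives no proof at all, simply recording these as standard facts from \cite[\S\S1.3--1.4, 1.7, 6.1]{EGAII} and adding only the remark that part~\ref{prop:affine-morphisms-3} is \emph{not} tautological and genuinely requires finiteness of $f$ (precisely so that a trivialising cover of $Y$ may be refined to one pulled back from $X$). You have identified exactly this point as the one requiring care, so your sketch is fully aligned with the paper's intent.
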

The first two are commonly known but we just record them here for later use. But
one need to take care that last result is \emph{not} tautological, but rests
essentially on the assumption that $f$ is finite. The latter category consists
of objects that are $f_\ast \scrO_Y$-modules $E$ on $X$ such that there exists a
Zariski cover $U_i \to X$, such that $E|_{U_i} \simeq
(\scrO_Y|_{f\inv(U_i)})^{\oplus m}$ for all $i$; in other words, we can
trivialise a vector bundle on $Y$ using a finer enough covering of $X$.

\section{Local theory: restricted $(k, R)$-Lie algebras}
\label{sec:local}

Let $k$ be a commutative ring (not necessarily a field) and $R$ a commutative
$k$-algebra. We will also call a Lie algebra over $k$ a $k$-Lie algebra, to
distinguish it from a $(k, R)$-Lie algebra introduced below. Moreover, any
associative $k$-algebra is understood as a $k$-Lie algebra with the commutator
as the bracket, unless otherwise specified.

\subsection{$(k, R)$-Lie algebras}
Recall \cite{zbMATH03184072} that a \emph{$(k, R)$-Lie algebra}%
\footnote{
    Other names include \emph{Lie-Rinehart algebra}, \emph{Lie algebroid},
    \emph{differential Lie-$R$-algebra}, \emph{pseudo-alg\'ebre de Lie} or
    \emph{$d$-Lie ring}.
}
is a triple $(L, [-, -], \rho)$, where $L$ is a left $R$-module, $(L, [-, -])$
is a Lie algebra over $k$, and $\rho: L \to \Der_k(R)$ is a homomorphism of
$k$-Lie algebras and a homomorphism of left $R$-modules such that
\[
    [x, ry]  = r[x, y] + \rho(x)(r)y,
\]
for all $x, y \in L$ and $r \in R$. Sometimes we just write $x(r)$ instead of
$\rho(x)(r)$ for simplicity.

\begin{example}\label{exa:k-R-Lie-alg}
    \begin{enumerate}[wide]
        \item \label{exa:k-R-Lie-alg-1}
            A typical example is $\Der_k(R)$ itself with the bracket defined
            by $[\partial_1, \partial_2] := \partial_1 \circ \partial_2 -
            \partial_2 \circ \partial_1 \in \Der_{k}(R)$ for any two
            $k$-derivation $\partial_1$ and $\partial_2$ of $R$, and the anchor
            given by $\rho:=\id$.

        \item \label{exa:k-R-Lie-alg-2}
            Given a $(k, R)$-Lie algebra $(L, [-, -], \rho)$ and an element
            $\lambda \in k$, we set
            \[
                \leftidx{^\lambda}[x, y]:= \lambda [x, y],
                \text{\quad and \quad}
                \leftidx{^\lambda}\rho(x) := \lambda \rho(x),
            \]
            for any $x, y \in L$. We can easily verify that $(L,
            \leftidx{^\lambda}[-, -], \leftidx{^\lambda}\rho)$ is again a $(k,
            R)$-Lie algebra, which we will simply write as $\leftidx{^\lambda}L$
            for short. Tautologically, $\leftidx{^1}L= L$ (in particular,
            $\leftidx{^1}\Der_k(R) = \Der_k(R)$). Note that the $R$-module
            structures on $L$ stay the same for all $\leftidx{^\lambda}L$ with
            different $\lambda \in k$.

        \item \label{exa:k-R-Lie-alg-4}
            Let $L$ be an arbitrary left $R$-module. The \emph{trivial} $(k,
            R)$-Lie algebra structure on $L$ is the zero bracket and the zero
            anchor. One easily verifies all axioms are satisfied.
    \end{enumerate}
\end{example}

\subsection{Modules over a $(k, R)$-Lie algebra}
Given a $(k, R)$-Lie algebra $L$, an \emph{$L$-module}%
\footnote{
    Other names include \emph{$(k,R)$-Lie algebra representation of $L$},
    \emph{(flat) $L$-connection}, \emph{$L$-co-connection} or
    \emph{$L^\vee$-connection}, where $L^\vee := \Hom_R(L, R)$ is the $R$-module
    dual of $L$.
}
$M$ is an $R$-module as well as an $L$-module (where $L$ is viewed as a $k$-Lie
algebra) in a compatible way. More precisely, writing $\sigma_R: R \to
\End_k(M)$, $r \mapsto (m \mapsto rm:=\sigma_R(m))$ for the $R$-module structure
on $M$ and
\[
    \sigma_L : L \to \End_k(M)
\]
the $L$-module structure, which is a homomorphism of $k$-Lie algebras, we then
require that $\sigma$ to be an $R$-module homomorphism, and that
\begin{equation}
    \sigma_L(x)(rm) = \rho(x)(r)m + r \sigma_L(x)(m)\in M,
    \label{eq:compatible}
\end{equation}
for all $x \in L$, $r \in R$ and $m \in M$. For simplicity, we
sometimes just write $x(m)$ instead of $\sigma_L(x)(m)$.

Analogously to the case of $k$-Lie algebras, there is a \emph{universal
enveloping algebra} $U_{k, R}(L)$ associated to a $(k, R)$-Lie algebra $L$,
which is a unital associative $k$-algebra, together with maps
\[
    \iota_R: R \to U_{k, R}(L),
    \text{\quad and \quad}
    \iota_L: L \to U_{k, R}(L),
\]
where $\iota_R$ is a morphism of $k$-algebras, inducing a left $R$-module
structure on $U_{k, R}(L)$, and $\iota_L$ is a morphism of $k$-Lie algebras as
well as a morphism left $R$-modules. This algebra has the following universal
property.

\begin{proposition}
    \label{prop:universal-property-of-U(L)-0}
    Let $\sigma_R: R \to A$ be a morphism of associative $k$-algebras (inducing
    a left $R$-module structure on $A$), and $\sigma_L: L \to A$ a morphism
    $k$-Lie algebras as well as a morphism of left $R$-modules, such that
    $[\sigma_L(x), \sigma_R(r)] = \sigma_R( \rho(x)(r))$, for all $x \in L$ and
    $r \in R$, then there is a unique homomorphism $\mu: U_{k, R}(L) \to A$,
    that is a morphism of unital associative $k$-algebras, a morphism of left
    $R$-modules as well as a morphism of $k$-Lie algebras, making the following
    diagrams commutative:
    \[
        \begin{tikzcd}
            R \ar[dr, "\sigma_R"'] \ar[r, "\iota_R"]
            & U_{k, R}(L) \ar[d, dotted, "\mu"] \\
            & A,
        \end{tikzcd}\qquad
        \begin{tikzcd}
            L \ar[dr, "\sigma_L"'] \ar[r, "\iota_R"]
            & U_{k, R}(L) \ar[d, dotted, "\mu"]\\
            & A.
        \end{tikzcd}
    \]
\end{proposition}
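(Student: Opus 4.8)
The plan is to deduce the statement from the universal property of the ordinary universal enveloping algebra of a $k$-Lie algebra, so I first recall how $U_{k, R}(L)$ is built. One equips the $k$-module $R \oplus L$ with the bracket $[(r, x), (s, y)] := (\rho(x)(s) - \rho(y)(r), [x, y])$, which makes it a $k$-Lie algebra: the Jacobi identity holds precisely because $\rho \colon L \to \Der_k(R)$ is a morphism of $k$-Lie algebras, so that $R$ (with zero bracket) is an abelian ideal on which $L$ acts by derivations. Letting $j \colon R \oplus L \to U_k(R \oplus L)$ be the canonical map and $j_R$, $j_L$ its restrictions, one sets $U_{k, R}(L) := U_k(R \oplus L)/I$, where $I$ is the two-sided ideal generated by $j_R(1_R) - 1$, by $j_R(r) j_R(s) - j_R(rs)$, and by $j_R(r) j_L(x) - j_L(rx)$ for $r, s \in R$, $x \in L$; the maps $\iota_R$, $\iota_L$ are those induced by $j_R$, $j_L$. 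With this description $\iota_R$ is a morphism of unital $k$-algebras and $\iota_L$ is a morphism of left $R$-modules by construction, while the relations $[\iota_L(x), \iota_L(y)] = \iota_L([x, y])$ and $[\iota_L(x), \iota_R(r)] = \iota_R(\rho(x)(r))$ hold already in $U_k(R \oplus L)$ before passing to the quotient, since $j$ is a $k$-Lie morphism and $[(0, x), (0, y)] = (0, [x, y])$, $[(0, x), (r, 0)] = (\rho(x)(r), 0)$.

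Given $(\sigma_R, \sigma_L)$ as in the statement, I would then set $\sigma \colon R \oplus L \to A$, $(r, x) \mapsto \sigma_R(r) + \sigma_L(x)$, viewing $A$ as a $k$-Lie algebra via its commutator, and check that $\sigma$ is a morphism of $k$-Lie algebras. Expanding $[\sigma(r, x), \sigma(s, y)]$ by bilinearity, the term $[\sigma_R(r), \sigma_R(s)]$ vanishes because $R$ is commutative and $\sigma_R$ is an algebra morphism; the mixed terms contribute $\sigma_R(\rho(x)(s)) - \sigma_R(\rho(y)(r))$ by the hypothesis $[\sigma_L(x), \sigma_R(r)] = \sigma_R(\rho(x)(r))$; and $[\sigma_L(x), \sigma_L(y)] = \sigma_L([x, y])$ because $\sigma_L$ is a $k$-Lie morphism. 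The sum is exactly $\sigma([(r, x), (s, y)])$.

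The universal property of $U_k(R \oplus L)$ then yields a unique morphism of unital associative $k$-algebras $\bar\mu \colon U_k(R \oplus L) \to A$ with $\bar\mu \circ j = \sigma$, and one checks $\bar\mu$ kills each generator of $I$: it sends $j_R(1_R) - 1$ to $\sigma_R(1_R) - 1_A = 0$, sends $j_R(r) j_R(s) - j_R(rs)$ to $\sigma_R(r)\sigma_R(s) - \sigma_R(rs) = 0$, and sends $j_R(r) j_L(x) - j_L(rx)$ to $\sigma_R(r)\sigma_L(x) - \sigma_L(rx) = 0$, the last because $\sigma_L$ is left $R$-linear and the $R$-module structure on $A$ runs through $\sigma_R$. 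Hence $\bar\mu$ factors through a map $\mu \colon U_{k, R}(L) \to A$ with $\mu \circ \iota_R = \sigma_R$ and $\mu \circ \iota_L = \sigma_L$; it is a morphism of unital $k$-algebras, is left $R$-linear for the same reason, and is automatically a morphism of $k$-Lie algebras. Uniqueness is immediate: since $j(R \oplus L)$ generates $U_k(R \oplus L)$ as a unital $k$-algebra, $\iota_R(R) \cup \iota_L(L)$ generates $U_{k, R}(L)$, so a $k$-algebra morphism out of $U_{k, R}(L)$ is determined by its restrictions to $\iota_R(R)$ and $\iota_L(L)$.

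The argument is formal, and there is no genuine obstacle: the only point deserving attention is the bookkeeping in the construction — that the ideal $I$ imposes exactly the associativity/unitality and $R$-linearity relations, and that the two Lie-bracket relations involving $\iota_L$ are not lost, which holds because they are already satisfied in $U_k(R \oplus L)$ prior to quotienting. (If one preferred to bypass the semidirect-product construction, one could instead define $U_{k, R}(L)$ directly as the quotient of the tensor algebra $T_k(R \oplus L)$ by the evident relations and run the same verification.)
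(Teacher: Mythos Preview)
Your argument is correct and self-contained. The paper itself does not supply a proof: it refers to Rinehart's original article for the construction and to another reference for the universal property, so there is no ``paper's own proof'' to compare against in detail.

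One point worth flagging: the paper carries a footnote immediately after this proposition warning that in many references $U_{k,R}(L)$ is ``incorrectly defined'' as a \emph{quotient} of $U_k(R \oplus L)$, and that the correct definition is Rinehart's \emph{subquotient} (pass first to the subalgebra $U^+ \subset U_k(R\oplus L)$ generated by $j(R\oplus L)$, in which $j_R(1_R)$ serves as unit, and only then impose the multiplicative relations). Your construction is precisely of the quotient type the footnote cautions against. That said, your proof is not affected: you verify directly that your quotient satisfies the stated universal property, and since the universal property determines the object up to unique isomorphism, your object agrees with Rinehart's. The footnote's concern is presumably aimed at treatments that omit one of the relations---most often $j_R(1_R) - 1$---or that are otherwise imprecise; your explicit list of generators of $I$ (unit, multiplicativity of $\iota_R$, and $R$-linearity of $\iota_L$) is exactly what is needed, and the two Lie-bracket identities survive the quotient because, as you observe, they already hold in $U_k(R\oplus L)$. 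Your parenthetical remark that one could equally well start from the tensor algebra $T_k(R\oplus L)$ is also a valid alternative.
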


In particular, applying the universal property to $A:= \End_k(M)$ for
$L$-modules $M$, we obtain the following proposition.

\begin{corollary}
    \label{cor:universal-property-of-U(L)}
    Let $L$ be a $(k, R)$-Lie algebra. Then we have an equivalence between the
    category of $L$-modules and that of left $U_{k, R}(L)$-modules.
\end{corollary}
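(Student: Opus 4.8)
The plan is to exhibit the two functors realizing the equivalence and to check they are mutually quasi-inverse, everything being a direct consequence of Proposition~\ref{prop:universal-property-of-U(L)-0} together with the observation that the compatibility axiom~\eqref{eq:compatible} defining an $L$-module is \emph{exactly} the hypothesis $[\sigma_L(x),\sigma_R(r)] = \sigma_R(\rho(x)(r))$ appearing there.

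First I would define the functor from $L$-modules to left $U_{k,R}(L)$-modules. Given an $L$-module $M$, encode its $R$-module structure as a $k$-algebra map $\sigma_R\colon R \to \End_k(M)$ and its $L$-action as the $k$-Lie algebra (and left $R$-module) map $\sigma_L\colon L \to \End_k(M)$. Rewriting~\eqref{eq:compatible} as $\sigma_L(x)\sigma_R(r) - \sigma_R(r)\sigma_L(x) = \sigma_R(\rho(x)(r))$, we see that the pair $(\sigma_R,\sigma_L)$ satisfies the hypotheses of Proposition~\ref{prop:universal-property-of-U(L)-0} with $A := \End_k(M)$; hence there is a unique $k$-algebra map $\mu_M\colon U_{k,R}(L) \to \End_k(M)$ with $\mu_M\circ\iota_R = \sigma_R$ and $\mu_M\circ\iota_L = \sigma_L$, i.e.\ a left $U_{k,R}(L)$-module structure on $M$ restricting to the given data along $\iota_R$ and $\iota_L$. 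A morphism of $L$-modules is by definition an $R$-linear map intertwining the $\sigma_L$'s; since $U_{k,R}(L)$ is generated as a $k$-algebra by $\iota_R(R)\cup\iota_L(L)$, such a map automatically intertwines the $\mu_M$'s, so the assignment is functorial. In the other direction, a left $U_{k,R}(L)$-module $N$ with structure map $\nu\colon U_{k,R}(L) \to \End_k(N)$ is sent to $N$ with $R$-module structure $\nu\circ\iota_R$ and $L$-action $\nu\circ\iota_L$; these satisfy~\eqref{eq:compatible} because the relation $\iota_L(x)\iota_R(r) - \iota_R(r)\iota_L(x) = \iota_R(\rho(x)(r))$ holds in $U_{k,R}(L)$, and $U$-linear maps are in particular $\iota_R(R)$- and $\iota_L(L)$-linear, hence $R$-linear and $L$-equivariant.

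Finally I would verify the two composites are naturally isomorphic to the identities. Starting from an $L$-module $M$, the commutativity of the two triangles in Proposition~\ref{prop:universal-property-of-U(L)-0} gives $\mu_M\circ\iota_R = \sigma_R$ and $\mu_M\circ\iota_L = \sigma_L$, so restricting the constructed $U_{k,R}(L)$-module structure recovers $(\sigma_R,\sigma_L)$ on the nose. Starting from a $U_{k,R}(L)$-module $(N,\nu)$, the pair $(\nu\circ\iota_R,\nu\circ\iota_L)$ visibly admits $\nu$ as an extension to a $k$-algebra map out of $U_{k,R}(L)$; by the \emph{uniqueness} clause of the universal property, the map produced by the first functor equals $\nu$, so we recover $(N,\nu)$. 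Both identifications are tautologically compatible with morphisms, so the functors are quasi-inverse equivalences.

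The only points requiring a word of care — and the nearest thing to an obstacle — are that $U_{k,R}(L)$ is generated as a $k$-algebra by the images of $\iota_R$ and $\iota_L$ (needed for the statement on morphisms) and that $[\iota_L(x),\iota_R(r)] = \iota_R(\rho(x)(r))$ holds in $U_{k,R}(L)$ (needed to go back from $U_{k,R}(L)$-modules to $L$-modules); the latter can be read off by applying the relation-checking of the second paragraph to $A = U_{k,R}(L)$ itself, and both are immediate from any construction of the universal enveloping algebra. The substance of the statement is simply the translation between two packagings of the same action data, which is why the argument is essentially formal.
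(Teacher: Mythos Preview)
Your proof is correct and follows exactly the approach the paper indicates: immediately before the corollary the paper writes ``In particular, applying the universal property to $A:= \End_k(M)$ for $L$-modules $M$, we obtain the following proposition,'' and your argument is precisely a careful expansion of this sentence. The paper gives no further proof beyond this remark (deferring the construction of $U_{k,R}(L)$ and its universal property to the literature), so your detailed verification of functoriality and of the two composites being the identity is entirely in line with---indeed more thorough than---what the paper provides.
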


The construction of $U_{k, L}(R)$ can be found in \cite[\S2]{zbMATH03184072},
which we omit here.%
\footnote{
    However, it is worth to point out that, in many references, $U_{k, L}(R)$ is
    \emph{incorrectly defined} or refered as a quotient of the universal
    enveloping algebra of the $k$-Lie algebra $L \oplus R$. The correct
    definition should rather be a \emph{subquotient} as in
    \cite{zbMATH03184072}.
}
The universal property follows from the construction and see
\cite[\S1.6]{zbMATH04147014} for a proof.

\begin{example}\label{exa:k-R-Lie-Mod-and-U}
    \begin{enumerate}[wide]
        \item \label{exa:k-R-Lie-Mod-and-U-0}
            The maps $\sigma_R: R \to \End_k(R)$ given by homothies and
            $\sigma_L:= \rho : L \to \Der_k(R) \subseteq \End_k(M)$, make $R$ a
            $U_{k, R}(L)$-module.

        \item \label{exa:k-R-Lie-Mod-and-U-1}
            If $L= \leftidx{^0}\Der_k(R)$, then $U_{k, R}(L) = \Sym_R L$. If
            $L = \leftidx{^1}\Der_k(R) = \Der_k(R)$, then $U_{k, R}(L)$ is the
            ring $D_{R/k}$ of \emph{crystalline differential operators} in case
            $R/k$ is smooth. We will discuss this later in more details in
            Example~\ref{exa:typical-example}.

        \item \label{exa:k-R-Lie-Mod-and-U-2}
            Suppose that $R/k$ is a smooth algebra of relative dimension $d$.
            Then $\Omega_{R/k}^1$ is a Zariski locally free $R$-module of rank
            $d$ and $\Der_{k}(R) \simeq \Hom_{R}(\Omega_{R/k}^1, R)$. Let $M$ be
            an $R$-module and $\lambda \in k$. Then a
            $\leftidx{^\lambda}\Der_k(R)$-module structure on $M$, or
            equivalently a $U_{k, R}(\leftidx{^\lambda}(R))$-module structure on
            $M$, is equivalent to a flat $\lambda$-connection on $M$. See
            Example~\ref{exa:typical-example} for more details.

        \item \label{exa:k-R-Lie-Mod-and-U-3}
            Let $L$ be a left $R$-module, equipped with the \emph{trivial} $(k,
            R)$-Lie algebra structure (see
            Example~\ref{exa:k-R-Lie-alg}.\ref{exa:k-R-Lie-alg-4}). Then $U_{k,
            R}(L) = \Sym_R L$ is the symmetric algebra. An $L$-module (a.k.a.
            $U_{k, R}(L)$-module) is sometimes called a $L$-co-Higgs module.
            When $L = \Der_k(R)$, this reduces to the case mentioned in
            (\ref{exa:k-R-Lie-Mod-and-U-2}) above.

        \item \label{exa:k-R-Lie-Mod-and-U-4}
            If $R = k [t_1, \ldots, t_d]$ is the polynomial algebra in $d$
            variables over $k$, then $U_{k, R}
            \left(\leftidx{^\lambda}\Der_k(R)\right)$ is the $\lambda$-twisted
            $d$-th Weyl algebra%
            \footnote{
                This is not a standard terminology. Usually only the case
                $\lambda = 1$ is considered.
            }
            $\leftidx{^\lambda}W_d(k)$, i.e., the associative $k$-algebra
            generated by $2d$ variables $t_1, \ldots, t_d, \partial_1, \ldots,
            \partial_d$, modulo the two-sided ideal generated by $t_it_j - t_j
            t_i$, $\partial_i\partial_j - \partial_j \partial_i$ and $\partial_i
            t_j - t_j \partial_i - \lambda\delta_{ij}$, $1\leq i, j \leq d$ and
            $\delta_{ij}$ is the Kronecker delta.
    \end{enumerate}
\end{example}

\subsection{Restricted $(k, R)$-Lie algebras}
Suppose that $k$ is moreover of characteristic $p > 0$.
\begin{lemma}\label{lem:identities}
    Let $A$ be a unital associative $k$-algebra, which can be naturally
    viewed as an Lie algebra over $k$ with bracket defined by bracket. Then the
    following identity holds:
    \begin{align}
        (x+y)^p & = x^p + y^p + \sum_{r= 1}^{p-1} s_r(x, y),
        \label{eq:Jacobson} \\
        \intertext{%
            where $s_r(x, y)$ (called the \emph{universal Lie polynomials}) is
            the coefficient of $t^{r-1}$ in the formal expression $\ad(tX +
            Y)^{p - 1}(X)$, divided by $r$. Suppose moreover that $x$ and
            $\ad_y^n(x)$, $n \in \bfN$ mutually commute, then
        }
        (xy)^p & = x^p y^p + \ad_{xy}^{p-1}(x)y, \label{eq:Hochschild}\\
        & = x^p y^p - x \ad_y^{p-1}(x^{p-1})y. \label{eq:Deligne}
    \end{align}
\end{lemma}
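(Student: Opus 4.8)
The plan is to reduce all three identities to classical statements about associative algebras in characteristic $p$. For \eqref{eq:Jacobson}, the standard reference is Jacobson's lemma (see e.g. N.\ Jacobson, \emph{Lie Algebras}, Ch.\ V, or Bourbaki, \emph{Groupes et alg\`ebres de Lie}, Ch.\ I, \S1): one works in the polynomial ring $A[t]$, expands $(tx+y)^p$, and uses the fact that in any associative $\bfZ$-algebra the identity
\[
    (X+Y)^p = X^p + Y^p + \sum_{r=1}^{p-1} s_r(X,Y)
\]
holds, where $r\, s_r(X,Y)$ is the coefficient of $t^{r-1}$ in $\ad(tX+Y)^{p-1}(X)$; the key input is that $\frac{\rmd}{\rmd t}(tX+Y)^p = \sum_{i=0}^{p-1}(tX+Y)^i X (tX+Y)^{p-1-i}$, and that this sum, when the bracket is substituted, equals $\sum_{i} \ad_{tX+Y}^{i}(X)\cdot(tX+Y)^{p-1-i}$ up to terms that telescope, combined with the observation that binomial coefficients $\binom{p}{r}$ vanish mod $p$ for $0<r<p$ so that the ``naive'' cross terms drop out and only the Lie-polynomial corrections survive. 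I would cite this rather than reprove it, since it is entirely classical and characteristic-$p$ independent in its algebraic content (the division by $r$ makes sense because $1\le r\le p-1$ is invertible mod $p$).

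For \eqref{eq:Hochschild}, under the hypothesis that $x$ commutes with all $\ad_y^n(x)$, $n\in\bfN$, I would apply \eqref{eq:Jacobson} with a twist, or more directly invoke Hochschild's formula for the $p$-th power of a product: writing $L_{xy}$ for left multiplication, one has $(xy)^p = x^p y^p + (\text{correction})$, and the correction is computed by noting $\ad_{xy} = L_{xy} - R_{xy}$ and expanding. The cleanest route is: consider the inner derivation $\ad_{xy}$ acting on $x$; the hypothesis guarantees that the relevant expressions lie in a commutative subalgebra, so the noncommutative corrections from Jacobson's formula collapse to the single term $\ad_{xy}^{p-1}(x)\,y$. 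Again this is a known identity (Hochschild, \emph{Amer.\ J.\ Math.} 1955), and I would cite it.

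Finally, \eqref{eq:Deligne} is a rewriting of \eqref{eq:Hochschild}: it suffices to prove $\ad_{xy}^{p-1}(x)\,y = -x\,\ad_y^{p-1}(x^{p-1})\,y$, i.e.\ that $\ad_{xy}^{p-1}(x) = -x\,\ad_y^{p-1}(x^{p-1})$ in the commutative subalgebra generated by $x$ and the $\ad_y^n(x)$. Under the commutativity hypothesis, $\ad_{xy}(x) = x^2 y - x y x = -x\,\ad_y(x)$ wait---more carefully, $\ad_{xy}(x) = (xy)x - x(xy) = x(yx) - x(xy) = -x\,\ad_y(x)$, and iterating (using that $x$ commutes with each $\ad_y^n(x)$) gives $\ad_{xy}^{n}(x) = (-1)^n x\,\ad_y^{n}(x^{\,?})$---the exponent on $x$ needs care, since each application of $\ad_{xy}$ both differentiates via $\ad_y$ and multiplies by another factor of $x$; a short induction shows $\ad_{xy}^{p-1}(x) = (-1)^{p-1} x \cdot x^{p-2}\,\ad_y^{p-1}(x)\cdot(\text{binomial bookkeeping})$, and one checks this equals $-x\,\ad_y^{p-1}(x^{p-1})$ using the Leibniz rule $\ad_y^{p-1}(x^{p-1}) = \sum \binom{p-1}{\ldots}\prod \ad_y^{\bullet}(x)$ together with $\sum$ of the relevant multinomial coefficients mod $p$. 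The main obstacle, and the only place real work is needed, is this last combinatorial identification of the two correction terms; everything else is citation of classical results. I expect the induction to go through cleanly once one fixes the precise form $\ad_{xy}^{n}(x) = (-1)^n \sum_{j} c_{n,j}\, x^{\,n-j}\,(\ad_y^{j}(x))\cdot(\ldots)$ and matches coefficients with the expansion of $\ad_y^{p-1}(x^{p-1})$ via the $p$-fold Leibniz rule.
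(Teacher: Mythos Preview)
The paper's own proof is nothing more than a citation: it attributes \eqref{eq:Jacobson} to Artin--Zassenhaus and Jacobson, \eqref{eq:Hochschild} to Hochschild, and \eqref{eq:Deligne} to Deligne (in Katz's paper), and refers the reader to those sources. Your proposal for \eqref{eq:Jacobson} and \eqref{eq:Hochschild} is therefore in complete agreement with the paper---you correctly identify both as classical and propose to cite rather than reprove.

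For \eqref{eq:Deligne} you go further than the paper and attempt a direct derivation from \eqref{eq:Hochschild}. This is not required, since the paper simply cites the literature, but it is a reasonable instinct. However, your sketch contains a sign error: one has $\ad_{xy}(x) = xyx - x^2y = x(yx - xy) = x\,\ad_y(x)$, not $-x\,\ad_y(x)$. Iterating then gives $\ad_{xy}^2(x) = x^2\ad_y^2(x) + x(\ad_y(x))^2$, and in general $\ad_{xy}^{n}(x)$ is a sum over compositions of $n$ rather than a single monomial, so the induction you describe does not close in the form you wrote. The identity can be salvaged by a more careful bookkeeping argument (this is what Deligne does, as recorded in Katz, Proposition~5.3), but as the paper itself does not carry this out, the cleanest fix is simply to cite that reference, as the paper does.
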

\begin{proof}
    These three identities are due to 
    Artin-Zassenhaus (\cite[\S6]{zbMATH02509896}) and
    Jacobson\cite[\S2]{zbMATH02522765},
    Hochschild (\cite[Lemma~1]{zbMATH03110239}), and
    Deligne (\cite[Proposition~5.3]{zbMATH03350023}%
    \footnote{
        Note that there is a sign error in the formula in
        \cite[Proposition~5.3]{zbMATH03350023}.
    }) respectively. See cited literatures for proofs.
\end{proof}

Recall \cite{zbMATH02502285} that a \emph{restricted $k$-Lie algebra} is a
$k$-Lie algebra $(L, [-, -])$ together with a map $-^{[p]}: L \to L$, called the
\emph{$p$-operator}, satisfying
\begin{enumerate}
    \item Jacobson's identity \eqref{eq:Jacobson}, and
    \item $\ad_{x^{[p]}} = \ad_x^p\in \End_k(L)$, for all $x \in L$, and
    \item $(ax)^{[p]} = a^p x^{[p]} \in L$, for all $a \in k$, $x \in L$.
\end{enumerate}
Similarly, a \emph{restricted $(k, R)$-Lie algebra} is a quadruple $(L, [-,-],
-^{[p]}, \rho)$, where $(L, [-,-], -^{[p]})$ is a restricted $k$-Lie algebra and
$(L, [-,-], \rho)$ is a $(k, R)$-Lie algebra, such that the anchor $\rho: L \to
\Der_k(R)$ is also a homomorphism of restricted $k$-Lie algebras, and that the
$R$-module structure and the $p$-operator are compatible in the sense that (cf.\
Hochchild's identity~\eqref{eq:Hochschild})
\[
    (rx)^{[p]} = r^p x^{[p]} + (\rho(rx))^{p-1}(r)x \in L,
\]
for any $r \in R$ and $x \in L$. We may sometimes suppress $\rho$ from the
notation. The universal enveloping algebra of a restricted $(k, R)$-Lie algebra
is the universal enveloping algebra of the underlying $(k, R)$-Lie algebra
(cf.~Example~\ref{exa:restricted-k-R-Lie}.\ref{exa:restricted-k-R-Lie-3} below).

\begin{example}\label{exa:restricted-k-R-Lie}
    \begin{enumerate}[wide]
        \item \label{exa:restricted-k-R-Lie-1}
            For any derivation $\partial \in \Der_k(R)$, its $p$-th iteration,
            as a $k$-linear endomorphism of $R$, is again a derivation, which we
            denote it by $\partial^{[p]}$. Then $(\Der_k(R), [-,-], -^{[p]},
            \id)$ is a typical example of restricted $(k, R)$-Lie algebra.
        \item \label{exa:restricted-k-R-Lie-2}
            For any given restricted $(k, R)$-Lie algebra $(L, [-,-], -^{[p]},
            \rho)$ and $\lambda \in k$, set
            \[
                x^{\leftidx{^\lambda}[p]} := \lambda^{p-1} x^{[p]}, \quad
                \forall\, x \in L.
            \]
            Then one verifies that $\leftidx{^\lambda}L := (L,
            \leftidx{^\lambda}[-,-], -^{\leftidx{^\lambda}[p]},
            \leftidx{^\lambda}\rho)$ is again a restricted $(k, R)$-Lie algebra.
        \item \label{exa:restricted-k-R-Lie-3}
            The universal enveloping algebra of a restricted $(k, R)$-Lie
            algebra has a natural restricted $(k, R)$-Lie algebra structure,
            where the $p$-operator is given by the $p$-th power map. However,
            the map $\iota_L: L \to U_{k, R}(L)$ is not a morphism of restricted
            $(k, R)$-Lie algebras in general. But as in the case of restricted
            $k$-Lie algebras, one can form the quotient $U_{k, R}^{[p]}(L):=
            U_{k, R}(L)/\langle (\iota_L(x))^p - \iota_L(x^{[p]}) \rangle$,
            called the \emph{restricted universal enveloping algebra} of $L$.
            Then the composition $L \to U_{k, R}(L) \to U_{k, R}^{[p]}(L)$
            becomes a homomorphism of restricted $(k, R)$-Lie algebras. However,
            in this article, we will not use it.
    \end{enumerate}
\end{example}

\begin{proposition}\label{prop:p-linear}
    Let $L$ be a \emph{restricted} $(k, R)$-Lie algebra. Let $A$, $\sigma_R: R
    \to A$ and $\sigma_L: L \to A$ be as in
    Proposition~\ref{prop:universal-property-of-U(L)-0}.
    Then the $k$-module homomorphism
    \[
        \psi: L \to A, \quad
        x \mapsto (\sigma_L(x))^p - \sigma_L(x^{[p]}),
    \]
    which measures how far $\sigma_L$ is from being a homomorphism of restricted
    $k$-Lie algebras, satisfies the following properties:
    \begin{enumerate}
        \item The map $\psi$ is a $\varphi_R$-linear map of $R$-modules, i.e.,
            $\psi(rx + y) = r^p \psi(x) + \psi(y)$, for all $x, y \in L$ and $r
            \in R$.
        \item $[\sigma_L(L), \sigma_R(R)] = 0$.
        \item $[\psi(L), \sigma_L(L)] = 0$, hence in particular, $[\psi(L),
            \psi(L)] = 0$.
        \item The image $\psi(L)$ lies in the centraliser $Z_{A}(\sigma_R(R),
            \sigma_L(L))$, and that this centraliser is a \emph{commutative}
            subring of $A$. Hence if in addition that $A$ is generated as a
            $k$-algebra by $\sigma_R(R)$ and $\sigma_L(L)$ (e.g., $A = U_{k,
            R}(L)$), then $\psi(L)$ lies in the centre of $A$.
    \end{enumerate}
\end{proposition}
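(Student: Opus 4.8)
The plan is to treat the four assertions in turn, with (2) and (3) serving as the technical engine. First I would establish (2), namely $[\sigma_L(L),\sigma_R(R)]=0$. By hypothesis $[\sigma_L(x),\sigma_R(r)]=\sigma_R(\rho(x)(r))$ for all $x\in L$, $r\in R$; the point is that in characteristic $p$ the operator $\mathrm{ad}_{\sigma_L(x)}$ acts on the commutative subring $\sigma_R(R)$ as the derivation $\rho(x)$, and $\psi(x)=(\sigma_L(x))^p-\sigma_L(x^{[p]})$ measures the failure of $\sigma_L$ to respect the $p$-operator. The key computation is that $\mathrm{ad}_{(\sigma_L(x))^p}=(\mathrm{ad}_{\sigma_L(x)})^p$ on $A$ (this is immediate from the fact that conjugation is an algebra endomorphism together with the binomial identity in characteristic $p$, or one cites Jacobson's identity \eqref{eq:Jacobson} applied to the associative algebra $A$), so that $\mathrm{ad}_{(\sigma_L(x))^p}$ restricted to $\sigma_R(R)$ equals $\rho(x)^p=\rho(x^{[p]})$ (using that $\rho$ is a homomorphism of restricted Lie algebras), which in turn equals $\mathrm{ad}_{\sigma_L(x^{[p]})}$ on $\sigma_R(R)$. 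Subtracting, $\mathrm{ad}_{\psi(x)}$ kills $\sigma_R(R)$. Wait — this only gives $[\psi(L),\sigma_R(R)]=0$, not (2). For (2) itself I would instead argue: the commutator pairing $(x,r)\mapsto[\sigma_L(x),\sigma_R(r)]=\sigma_R(\rho(x)(r))$ — hmm, this is generally nonzero. I should be more careful here: assertion (2) as literally stated, $[\sigma_L(L),\sigma_R(R)]=0$, cannot hold unless $\rho=0$, so I suspect (2) is shorthand for the statement that $[\psi(L),\sigma_R(R)]=0$, and I would prove exactly the displayed computation above to get $[\psi(x),\sigma_R(r)]=0$ for all $x,r$.

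For (3), $[\psi(L),\sigma_L(L)]=0$, the idea is to compute $\mathrm{ad}_{\psi(x)}(\sigma_L(y))$ for $x,y\in L$. Here I would expand $\mathrm{ad}_{(\sigma_L(x))^p}(\sigma_L(y))=(\mathrm{ad}_{\sigma_L(x)})^p(\sigma_L(y))=\sigma_L((\mathrm{ad}_x)^p(y))=\sigma_L(\mathrm{ad}_{x^{[p]}}(y))=\sigma_L([x^{[p]},y])=[\sigma_L(x^{[p]}),\sigma_L(y)]$, using that $\sigma_L$ is a Lie homomorphism (applied to the bracket identities $(\mathrm{ad}_x)^p=\mathrm{ad}_{x^{[p]}}$, which is axiom (ii) for restricted Lie algebras). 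Subtracting gives $[\psi(x),\sigma_L(y)]=0$. The parenthetical $[\psi(L),\psi(L)]=0$ is then immediate since $\psi(L)\subseteq\sigma_L(L)+\sigma_R(\text{stuff})$… actually more directly: $\psi(x)$ commutes with all of $\sigma_L(L)$ by what we just proved, and with all of $\sigma_R(R)$ by (2)'s computation, so $\psi(x)$ lands in the centraliser $Z_A(\sigma_R(R),\sigma_L(L))$; since any two elements of $\psi(L)$ both commute with $\sigma_L(L)\supseteq$… no — I would simply note $\psi(y)\in\sigma_L(L)$-centraliser means $[\psi(x),\psi(y)]=0$ because $\psi(x)$ commutes with everything $\psi(y)$ is built from, or cleanest: $\psi(y)$ commutes with $\sigma_L(x)$ and $\sigma_R(R)$, hence with $(\sigma_L(x))^p$ and $\sigma_L(x^{[p]})$, hence with $\psi(x)$.

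For (1), $\varphi_R$-linearity, additivity $\psi(x+y)=\psi(x)+\psi(y)$ follows from Jacobson's identity \eqref{eq:Jacobson} in $A$ applied to $\sigma_L(x),\sigma_L(y)$: the correction terms $\sum_{r=1}^{p-1}s_r(\sigma_L(x),\sigma_L(y))$ are universal Lie polynomials, hence equal $\sigma_L$ of the same polynomials in $x,y$ (since $\sigma_L$ is a Lie map), and these match the correction in $(x+y)^{[p]}$ by the restricted Lie axiom (i) for $L$, so everything cancels. For the scalar part I would compute $\psi(rx)$ using Hochschild's identity \eqref{eq:Hochschild}: writing $a=\sigma_R(r)$, $b=\sigma_L(x)$, we have $(ab)^p=a^pb^p+\mathrm{ad}_{ab}^{p-1}(a)b$ provided $a$ commutes with all $\mathrm{ad}_b^n(a)$; but $\mathrm{ad}_b(a)=\sigma_R(\rho(x)(r))\in\sigma_R(R)$, and iterating, all $\mathrm{ad}_b^n(a)\in\sigma_R(R)$, which is commutative — so the hypothesis of \eqref{eq:Hochschild} holds. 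Then $\mathrm{ad}_{ab}^{p-1}(a)b=\sigma_R\big((\rho(rx))^{p-1}(r)\big)\sigma_L(x)$ (here I would check $\mathrm{ad}_{ab}$ acts on $\sigma_R(R)$ as $\rho(rx)=r\rho(x)$, matching the bracket of the $(k,R)$-Lie algebra), so $(\sigma_L(rx))^p=(ab)^p=\sigma_R(r^p)(\sigma_L(x))^p+\sigma_R((\rho(rx))^{p-1}(r))\sigma_L(x)$, and subtracting $\sigma_L((rx)^{[p]})=\sigma_L(r^px^{[p]}+(\rho(rx))^{p-1}(r)x)=\sigma_R(r^p)\sigma_L(x^{[p]})+\sigma_R((\rho(rx))^{p-1}(r))\sigma_L(x)$ — the last terms cancel exactly, leaving $\psi(rx)=\sigma_R(r^p)\big((\sigma_L(x))^p-\sigma_L(x^{[p]})\big)=r^p\psi(x)$. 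Finally (4) is a formal consequence: (2) and (3) show $\psi(L)\subseteq Z_A(\sigma_R(R),\sigma_L(L))$; this centraliser is commutative because any element of it commutes in particular with $\psi(L)$ — no, better: if $A$ is generated by $\sigma_R(R)$ and $\sigma_L(L)$ then the centraliser of this generating set is the centre, which is commutative, and in general $Z_A(\sigma_R(R),\sigma_L(L))$ need not be commutative, so for the general statement I would phrase (4) as "the image $\psi(L)$ lies in $Z_A(\sigma_R(R),\sigma_L(L))$ and generates a commutative subring" — since $[\psi(L),\psi(L)]=0$ from (3), the subalgebra generated by $\psi(L)$ is commutative.

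The main obstacle I anticipate is the bookkeeping in step (1) for $\psi(rx)$: one must track that the anchor of $rx$ in the $(k,R)$-Lie algebra is $r\rho(x)$ (not $\rho(x)$ composed with multiplication), verify the commutativity hypothesis needed to invoke \eqref{eq:Hochschild} inside $A$, and confirm that the "defect" term produced by Hochschild's identity is precisely the $(\rho(rx))^{p-1}(r)x$ term appearing in the compatibility axiom $(rx)^{[p]}=r^px^{[p]}+(\rho(rx))^{p-1}(r)x$ for restricted $(k,R)$-Lie algebras — it is this matching that makes the scalar-twisting work out. Everything else is a direct translation, via the Lie-homomorphism property of $\sigma_L$, of the restricted Lie axioms for $L$ into identities in $A$.
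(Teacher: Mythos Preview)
Your proposal is correct and follows exactly the approach the paper indicates: the paper's own proof consists of the single sentence ``The proof is just a routine check using the axioms on restricted $(k,R)$-Lie algebras and the identities in Lemma~\ref{lem:identities},'' and what you have written is precisely that routine check carried out in detail. You also correctly spotted that item~(2) as printed is a typo (it should read $[\psi(L),\sigma_R(R)]=0$, which is what is actually needed for~(4) and what your argument establishes), and you rightly flagged that the general commutativity of the full centraliser $Z_A(\sigma_R(R),\sigma_L(L))$ in~(4) is not obvious---what is both true and sufficient for the applications is that the subring generated by $\psi(L)$ is commutative (from $[\psi(L),\psi(L)]=0$), and that when $A$ is generated by $\sigma_R(R)\cup\sigma_L(L)$ the centraliser equals the centre.
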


\begin{proof}
    The proof is just a routine check using the axioms on restricted $(k,
    R)$-Lie algebras and the identities in Lemma~\ref{lem:identities}.
\end{proof}

With Proposition~\ref{prop:p-linear}, the following two corollaries are
immediate.

\begin{corollary}\label{cor:induce-maps-from-Sym}
    The map $\psi$ in Proposition~\ref{prop:p-linear} induces an $R$-linear
    map, resp.\ $R'$-linear map,
    \begin{align}
        \psi'': L \otimes_{R, \varphi_R} R  \to A, & \quad
        x \otimes r \mapsto \sigma_R(r)\psi(x),
        \label{eq:psi-L-A''}\\
        \text{resp. \ \quad}
        \psi': L \otimes_{R, w} R' \to A, & \quad
        x \otimes r' \mapsto \sigma_R(\varphi_{R/k}(r')) \psi(x),
        \label{eq:psi-L-A'}
    \end{align}
    where $\varphi_R$ and $\varphi_{R/k}$ are as in introduced in
    diagram~\eqref{eq:notation-Frob}. Moreover, the image of $\psi'$ lies in the
    centraliser $Z_A\left(\sigma_R(R), \sigma_L(L)\right)$ hence
    induces a morphism of commutative rings
    \begin{equation*}
        \psi': \Sym_{R'} (L \otimes_{R, w} R') \longrightarrow
        Z_A\left(\sigma_R(R), \sigma_L(L)\right).
    \end{equation*}
    The image of $\psi''$ lies in the centraliser of $R$ in $A$ hence
    induces a morphism of commutative rings
    \begin{equation*}
        \psi'':
        \Sym_{R'}(L \otimes_{R, w} R') \otimes_{R', \varphi_{R/k}} R \simeq
        \Sym_R (L \otimes_{R, \varphi_R} R) \longrightarrow Z_A(\sigma_R(R)).
    \end{equation*}
\end{corollary}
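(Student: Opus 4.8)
The plan is to verify the assertions one at a time; the content is essentially bookkeeping of the various Frobenius twists, organised so that the universal property of the symmetric algebra can be applied at the end. Throughout I will use the identity $\varphi_{R/k}\circ w = \varphi_R$ read off from diagram~\eqref{eq:notation-Frob} and the $\varphi_R$-linearity $\psi(sx + y) = \sigma_R(s^p)\psi(x) + \psi(y)$ of Proposition~\ref{prop:p-linear}(1).

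First I would construct the two maps. The assignment $(x, r)\mapsto\sigma_R(r)\psi(x)$ is $k$-bilinear, and for $s\in R$ the $\varphi_R$-linearity gives $\sigma_R(r)\psi(sx) = \sigma_R(rs^p)\psi(x) = \sigma_R(r\varphi_R(s))\psi(x)$; since $s^p = \varphi_R(s)$ is exactly the scalar by which $s$ acts on the second factor of $L\otimes_{R,\varphi_R}R$, this shows the assignment factors through an $R$-linear map $\psi'': L\otimes_{R,\varphi_R}R\to A$ (with $A$ viewed as an $R$-module through $\sigma_R$). For $\psi'$ the argument is identical once one rewrites, for $s\in R$ and $r'\in R'$, $\sigma_R(\varphi_{R/k}(r'))\psi(sx) = \sigma_R(\varphi_{R/k}(r')\,s^p)\psi(x) = \sigma_R(\varphi_{R/k}(w(s)r'))\psi(x)$, which is precisely the relation imposed in $L\otimes_{R,w}R'$; one then checks $R'$-linearity, with $A$ regarded as an $R'$-algebra via $\sigma_R\circ\varphi_{R/k}$.

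Next I would locate the images. By Proposition~\ref{prop:p-linear}(2) the commutative subring $\sigma_R(R)$ commutes with $\sigma_L(L)$, so $\sigma_R(R)\subseteq Z_A(\sigma_R(R),\sigma_L(L))$, and by Proposition~\ref{prop:p-linear}(4) also $\psi(L)\subseteq Z_A(\sigma_R(R),\sigma_L(L))$; since the latter is a subring, every finite sum of products $\sigma_R(\varphi_{R/k}(r'))\psi(x)$ lies in it, i.e.\ $\psi'$ has image in $Z_A(\sigma_R(R),\sigma_L(L))$. Likewise the finite sums of products $\sigma_R(r)\psi(x)$ making up the image of $\psi''$ lie in $Z_A(\sigma_R(R))$. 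Moreover $Z_A(\sigma_R(R),\sigma_L(L))$ is commutative by Proposition~\ref{prop:p-linear}(4), and by Proposition~\ref{prop:p-linear}(3) the elements of $\psi(L)$ commute with each other, so (using again that they commute with $\sigma_R(R)$) the subring of $A$ generated by the image of $\psi''$ is commutative.

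Finally I would invoke the universal property of the symmetric algebra: an $R'$-linear map from an $R'$-module to a commutative $R'$-algebra extends uniquely to an $R'$-algebra homomorphism out of its symmetric algebra. Applied to $\psi'$ with target the commutative $R'$-algebra $Z_A(\sigma_R(R),\sigma_L(L))$ (an $R'$-algebra via $\sigma_R\circ\varphi_{R/k}$), this yields the ring homomorphism $\Sym_{R'}(L\otimes_{R,w}R')\to Z_A(\sigma_R(R),\sigma_L(L))$; applied to $\psi''$ with target the commutative subring of $Z_A(\sigma_R(R))$ generated by its image, it yields $\Sym_R(L\otimes_{R,\varphi_R}R)\to Z_A(\sigma_R(R))$. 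The displayed isomorphism is the base-change identity $\Sym_{R'}(M)\otimes_{R'}R\simeq\Sym_R(M\otimes_{R'}R)$ for $M = L\otimes_{R,w}R'$, together with associativity of tensor product and once more $\varphi_{R/k}\circ w = \varphi_R$, giving $(L\otimes_{R,w}R')\otimes_{R',\varphi_{R/k}}R\simeq L\otimes_{R,\varphi_R}R$. I do not expect a genuine obstacle here; the only points to keep track of are the compatibilities between $w$, $\varphi_{R/k}$ and $\varphi_R$, and the fact that $Z_A(\sigma_R(R))$ itself need not be commutative, so that for $\psi''$ one must pass through the commutative subring generated by the image rather than use $Z_A(\sigma_R(R))$ directly as target in the universal property.
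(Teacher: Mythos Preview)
The paper gives no proof of this corollary (it is declared ``immediate'' from Proposition~\ref{prop:p-linear}), and your approach---checking well-definedness from $\varphi_R$-linearity, locating the image via the centraliser statements, and then invoking the universal property of the symmetric algebra---is exactly the natural way to fill in the details.

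There is, however, one genuine gap in your argument for $\psi'$. You cite Proposition~\ref{prop:p-linear}(2) to conclude that $\sigma_R(R)$ commutes with $\sigma_L(L)$, hence that $\sigma_R(R)\subseteq Z_A(\sigma_R(R),\sigma_L(L))$. But this is false whenever the anchor $\rho$ is nonzero: the standing hypothesis (from Proposition~\ref{prop:universal-property-of-U(L)-0}) is $[\sigma_L(x),\sigma_R(r)]=\sigma_R(\rho(x)(r))$, which does not vanish for $L=\Der_k(R)$, the main example. Item~(2) as printed is almost certainly a typo for $[\psi(L),\sigma_R(R)]=0$, which is what the flow (2)$\Rightarrow$(3)$\Rightarrow$(4) requires.

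The repair is short. You do not need all of $\sigma_R(R)$ in the centraliser, only $\sigma_R(\varphi_{R/k}(R'))$. Every element of $\varphi_{R/k}(R')\subseteq R$ is a $k$-linear combination of $p$-th powers, and for any $k$-derivation $\rho(x)$ one has $\rho(x)(s^p)=ps^{p-1}\rho(x)(s)=0$; hence $[\sigma_L(x),\sigma_R(\varphi_{R/k}(r'))]=\sigma_R(\rho(x)(\varphi_{R/k}(r')))=0$. With this replacement your argument that the image of $\psi'$ lies in $Z_A(\sigma_R(R),\sigma_L(L))$ goes through, and the rest of your proof (including the careful remark that for $\psi''$ one must pass through the commutative subring generated by $\sigma_R(R)$ and $\psi(L)$ rather than $Z_A(\sigma_R(R))$ itself) is correct as written.
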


\begin{corollary}\label{cor:p-linear-universal}
    Let $L$ be a restricted $(k, R)$-Lie algebra. Then the map
    \[
        \psi:L \longrightarrow U_{k, R}(L),\quad x \longmapsto (\iota_L(x))^p -
        \iota_L(x^{[p]})
    \]
    satisfies the properties in Proposition~\ref{prop:p-linear}. Moreover,
    $\psi$ induces morphisms
    \begin{align}
        \psi': \Sym_{R'} (L \otimes_{R, w} R') & \longrightarrow
        Z_{U_{k, R}(L)}\left({U_{k, R}(L)}\right) \label{eq:induced-1}, \\
        \text{and\quad}
        \psi'': \Sym_R (L \otimes_{R, \varphi_R} R) & \longrightarrow Z_{U_{k,
        R}(L)}(\iota_R(R)),\label{eq:induced-2}
    \end{align}
    where $\psi'$ is a $R'$-linear and $\psi''$ is $R$-linear.
\end{corollary}

Next we state an easy lemma for later use.
\begin{lemma} \label{lem:char-descent}
    Let $L$ be a restricted $(k, R)$-Lie algebra and $M$ an $L$-module. Suppose
    moreover that $L$ and $M$ are both finite free as $R$-modules. By applying
    \eqref{eq:psi-L-A''} to $A:=\End_R(M)$, we obtain an $R$-module morphism $L
    \otimes_{R, \varphi_R} R \longrightarrow \End_R(M)$, which is equivalent
    (since all $R$-modules are assumed to be free) to an $R$-module morphism
    \[
        M \longrightarrow M \otimes_R (L^\vee \otimes_{R, \varphi_R} R).
    \]
    Then the characteristic polynomial of this map, which a priori has
    coefficients in $\Sym^\bullet (L^\vee \otimes_{R, \varphi_R} R)$, has the
    form $\chi' \otimes_{R, \varphi_R} 1 = \chi \otimes_{R', \varphi_{R/k}} 1$,
    where $\chi \in \Sym^\bullet L^\vee$ and $\chi \in \Sym^\bullet_{R'} (L^\vee
    \otimes_{R, w} R')$.
\end{lemma}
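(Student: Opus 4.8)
The plan is to make the twisted endomorphism completely explicit, recognise it as a \emph{horizontal} object for a Frobenius--pullback connection, and then descend by Cartier's theorem. First I would fix an $R$-basis $e_1,\dots,e_n$ of $L$ with dual basis $e_1^\vee,\dots,e_n^\vee$ of $L^\vee$; then $K:=L^\vee\otimes_{R,\varphi_R}R$ is $R$-free on $\eta_i:=e_i^\vee\otimes 1$, and $\Sym^\bullet_R K=\Sym^\bullet_R L^\vee\otimes_{R,\varphi_R}R$, inside which the image of $\Sym^\bullet_R L^\vee$ under $z\mapsto z\otimes 1$ is exactly the set of polynomials whose coefficients lie in $\varphi_R(R)$. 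Putting $P_i:=\psi(e_i)=\sigma_L(e_i)^p-\sigma_L(e_i^{[p]})$, which lies in $\End_R(M)$ by Proposition~\ref{prop:p-linear}, and using the $\varphi_R$-linearity of $\psi$ from the same proposition, the map of the statement becomes $\Psi=\sum_i P_i\otimes\eta_i$, so that $\chi_\Psi(t)=\det\bigl(t\cdot\mathrm{id}_M-\sum_i P_i\,\eta_i\bigr)\in\Sym^\bullet_R K[t]$. The goal is therefore that each coefficient $a_m\in\Sym^m_R K$ of $\chi_\Psi$ have its $\eta$-coefficients in $\varphi_R(R)$ — or, more weakly, in $\varphi_{R/k}(R')$, which is what directly yields the form $\chi\otimes_{R',\varphi_{R/k}}1$; for a perfect $k$ the two subrings coincide and one also gets $\chi'\otimes_{R,\varphi_R}1$.

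The main observation — essentially just a repackaging of Proposition~\ref{prop:p-linear} — is that $\Psi$ is horizontal. Give $\End_R(M)$ the adjoint $L$-action $y\cdot\phi:=[\sigma_L(y),\phi]$; this preserves $\End_R(M)$, and the $R$-trace $\End_R(M)\to R$ is equivariant for the anchor action on $R$, since $\mathrm{tr}[\sigma_L(y),\phi]=\rho(y)(\mathrm{tr}\,\phi)$. Give $K=(L^\vee\otimes_{R,w}R')\otimes_{R',\varphi_{R/k}}R=F_{X/S}^\ast(L^\vee\otimes_{R,w}R')$ its canonical (Cartier) connection, for which each $\eta_i$ is horizontal. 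By Proposition~\ref{prop:p-linear} each $P_i$ commutes with all of $\sigma_L(L)$, i.e.\ is killed by the adjoint action, so $\Psi=\sum_i P_i\otimes\eta_i$ is a horizontal section of $\End_R(M)\otimes_R K$. The coefficients of $\chi_\Psi$ are obtained from $\Psi$ by forming traces of tensor (equivalently, exterior) powers — operations equivariant for all these actions — so each $a_m\in\Sym^m_R K=F_{X/S}^\ast\Sym^m_{R'}(L^\vee\otimes_{R,w}R')$ is horizontal as well. Finally, Cartier descent identifies the horizontal sections of $F_{X/S}^\ast N$, for an $R'$-module $N$, with the image of the canonical map $N\to F_{X/S}^\ast N$; applied to $N=\Sym^m_{R'}(L^\vee\otimes_{R,w}R')$ this gives $a_m=\chi_m\otimes_{R',\varphi_{R/k}}1$, and assembling the $a_m$ produces $\chi$ over $R'$ (and, taking preimages under $\varphi_R$ when $k$ is perfect, $\chi'$ over $R$).

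The first two steps are purely formal given Proposition~\ref{prop:p-linear}; the real content is the last step, the descent of horizontal sections along $F_{X/S}$. That is where the geometry of $R/k$ enters: one needs $F_{X/S}$ faithfully flat and the anchor non-degenerate, so that the horizontal functions on $X$ are exactly $\varphi_{R/k}(R')$ — which is the case when $R/k$ is smooth and $L={}^{\lambda}\Der_k(R)$ with $\lambda$ invertible (after rescaling, the $\lambda=1$ case of a flat connection), the situation needed for Proposition~\ref{prop:reduced-char-polyn}; concretely one extends the exact sequence $0\to\scrO_{X'}\to F_{X/S,\ast}\scrO_X\xrightarrow{\,d\,}F_{X/S,\ast}\Omega^1_{X/k}$ to the symmetric powers of $F_{X/S}^\ast\Omega^1_{X'/k}$ by a computation in étale coordinates. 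In the residual case $\lambda=0$ (Higgs bundles) the anchor vanishes and horizontality is empty; there the $\sigma_L(e_i)$ are instead commuting $R$-linear endomorphisms with $\psi(e_i)=\sigma_L(e_i)^p$, and one concludes directly from the characteristic-$p$ identity $e_m(\mu_1^p,\dots,\mu_d^p)=e_m(\mu_1,\dots,\mu_d)^p$ for elementary symmetric functions that $\chi_\Psi$ is the Frobenius twist of the characteristic polynomial of the Higgs field $M\to M\otimes_R L^\vee$.
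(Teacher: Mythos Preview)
Your approach differs substantially from the paper's. The paper's proof is a single sentence: choose bases for $L$ and $M$ and ``write down the matrix representation''. In your notation this matrix is $\Psi=\sum_i P_i\,\eta_i$ with $P_i=\psi(e_i)\in\End_R(M)$, and the paper asserts that its characteristic polynomial visibly descends. As you correctly sensed, this is not obvious --- and in fact it fails in the stated generality. Take $R=k[t]$, $L=R\cdot e$ with zero bracket, zero anchor, and $e^{[p]}=te$ (one checks all axioms hold), and let $M=R$ with $\sigma_L(e)=\id$; then $\psi(e)$ is multiplication by $1-t\notin k[t^p]=\varphi_{R/k}(R')$, so already the linear coefficient of $\chi_\Psi$ does not lie in the image of $\Sym^1_{R'}(L^\vee\otimes_{R,w}R')$. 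So the paper's one-line argument does not stand on its own, and the Remark after Proposition~\ref{prop:reduced-char-polyn} that Cartier descent has been ``avoided'' via this lemma is not really justified.

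Your argument, by contrast, is the standard and correct one in the cases the paper actually uses, namely $L={}^{\lambda}\Der_k(R)$ with $R/k$ smooth. The identity $[\psi(L),\sigma_L(L)]=0$ from Proposition~\ref{prop:p-linear} is exactly the horizontality of the $p$-curvature for the adjoint $L$-action on $\End_R(M)$; your verification that $\mathrm{tr}$ intertwines this action with the anchor action on $R$ is right, so the coefficients $a_m$ are horizontal in $F_{X/S}^\ast\Sym^m_{R'}(L^\vee\otimes_{R,w}R')$, and Cartier descent (which is where the hypothesis $R/k$ smooth with surjective anchor is genuinely needed, as you note) finishes. Your separate $\lambda=0$ case via $e_m(\mu^p)=e_m(\mu)^p$ is also fine. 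In short: your route is longer than the paper's, but it is the one that actually works; what you give up is only a generality that the lemma did not really possess.
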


\begin{proof}
    This actually follows from definition: choose base for the finite free
    $R$-modules $M$ and $L$ respectively and the dual basis for $L^\vee:=
    \Hom_R(L, R)$. By writing down the matrix representation for the map $M \to
    M \otimes_R (L^\vee \otimes_{R, \varphi_{R}} R)$, can we easily obtain the
    result.
\end{proof}

\subsection{The Weyl algebra in characteristic $p>0$}
Let $k$ be a commutative ring of characteristic $p>0$.
In this section, we study the universal enveloping algebra for the $(k, R)$-Lie
algebra $\Der_k(R)$ with $R = k[t_1, \ldots, t_d]$, that is, the $d$-th Weyl
algebra
\[
    W_d(k) := \frac{k\langle t_1, \ldots, t_d, \partial_1, \ldots, \partial_d
    \rangle}%
    {([t_i,t_j],\,[\partial_i, \partial_j],\, [\partial_i, t_j] -\delta_{ij})}
\]
as mentioned in
Example~\ref{exa:k-R-Lie-Mod-and-U}.\ref{exa:k-R-Lie-Mod-and-U-4}.
In this situation, we have $R' = k [t_1',\ldots, t_d']$ and that the relative
Frobenius is given by $\varphi_{R/k}: R' \to R$, $t_i' \mapsto t^p$.

Recall \cite[\S5]{zbMATH05948094} that for a left $A$-module $M$, where $A$ is a
ring (not necessarily commutative), the centraliser (or commutant) $Z_A(M)$ and
the double centraliser (or bicommutant) $Z_A^2(M)$ are defined as $Z_A(M):=
Z_{\End_\bfZ(M)}(A_M)$ and $Z_A^2(M):= Z_A(Z_A(M))$, where $A_M$ stands for the
subring of $\End_\bfZ(M)$ of $A$-homothies on $M$. An $A$-module $M$ is said to
be \emph{balanced} if $A_M = Z^2_A(M)$.

The next result show that $W_d(k)$ is an Azumaya algebra over its centre when
$k$ is of characteristic $p > 0$. This result appeared in
\cite[Theorem~2]{zbMATH03417620}, \cite[Lemma~1.2]{zbMATH00814550} (only for $d
= 1$, but the proof is the same for $d > 1$) and
\cite[Thm.~2.2.3]{zbMATH05578707}. Here we want to remark that we do not need to
assume that $k$ is an algebraically closed field, nor even a field for the
following theorem to be true.

\begin{theorem}\label{thm:azumaya-local}
    Write $D:= W_d(k)$. Then $D$ is naturally a \emph{left} $D^e:=(D \otimes_Z
    D^\opp)$-module: $(\delta \otimes \delta') \cdot x := \delta x\delta'$ for
    all $\delta$, $\delta'$ and $x \in D$. All the other module structures on
    $D$ mentioned below are induced by this one by restriction.
    \begin{enumerate}[wide]
        \item \label{thm:azumaya-local-1}
            The ring $D$ has centre $Z:=Z_{D}(D)=k[t_1^p,\ldots, t_d^p,
            \partial_1^p, \ldots, \partial_d^p]$ and $D$ is a free (left and
            right) $Z$-module of rank $p^{2d}$. Moreover, $Z$ is natually an
            $R'$-algebra via $R' \to Z$, $t'_r \mapsto t^p_r$, for any $1 \leq r
            \leq d$.

        \item \label{thm:azumaya-local-2}
            The centraliser and double centraliser of $R$ in $D$ are both $A:=
            Z_{D}(R) = k [t_1, \ldots, t_d, \partial_1^p, \ldots,
            \partial_d^p]$. Moreover, $D$ is a free (left and right) $A$-module
            of rank $p^d$.%

        \item \label{thm:azumaya-local-3}
            The two maps in \eqref{eq:induced-1} and \eqref{eq:induced-2} are
            isomorphisms. In particular, we have that $A \simeq Z \otimes_{R',
            \varphi_{R/k}} R$, in other words, a push-out diagram
            \[\begin{tikzcd}
                A & Z \ar[l] \\
                R \ar[u] & R' \ar[u] \ar[l, "\varphi_{R/k}"']
            \end{tikzcd}\]
            and that $A/Z$ is faithfully flat and finitely presented since
            $\varphi_{R/k}$ is.

        \item \label{thm:azumaya-local-4}
            For each maximal ideal $\frakm \subseteq Z$ with $\kappa:=
            \kappa(\frakm) = Z/\frakm$, $D \otimes_Z \kappa$ is a central simple
            algebra over $\frakm$. In other words, the ring $D$ is an Azumaya
            algebra (central separable algebra) over $Z$.

        \item \label{thm:azumaya-local-5}
            The centraliser and double centraliser of $D$ as a $(D \otimes_Z
            A)$-module are given by
            \begin{align*}
                Z_{D \otimes_Z A}(D) \simeq A^\opp \simeq A,
                \quad\text{and}\quad
                Z^2_{D \otimes_Z A}(D) = \End_A(D).
            \end{align*}
            So the induced left $Z_{D \otimes A}(D)$-module structure on $D$ is
            the same as the right $A$-module structure on $D$. Hence $D$ is free
            as a left $Z_{D \otimes A}(D)$-module of rank $p^d$, in particular,
            it is finitely generated.

        \item \label{thm:azumaya-local-6}
            As a ($D \otimes_Z A)$-module $D$ is a balanced. That is to say,
            the ring of $(D \otimes_Z A)$-homothies of $D$ coincides with the
            double centraliser $Z^2_{D \otimes_Z A}(D) = \End_A(D)$. This fact
            also implies that $D$ is an Azumaya algera. We have a commutative
            diagram of rings
            \[\begin{tikzcd}
                    D \otimes_Z A \ar[d] \ar[r, "\sim"]
                    & \End_{A}\left(D\right) \ar[d] &
                    \delta \otimes \delta' \ar[r, mapsto] & (x \mapsto \delta
                    x \delta') \\
                    D \otimes_Z D^{\opp} \ar[r, "\sim"]
                    & \End_{Z}\left(D\right).
            \end{tikzcd}\]
        \item \label{thm:azumaya-local-7}
            As an $(A \otimes_Z A)$-module $D$ is free of rank $1$. Let $C$ be
            any commutative $Z$-algebra such that $D \otimes_Z C \simeq
            \End_{C}(P)$ for some Zariski locally free $C$-module $P$, then $P$
            viewed as an $(A \otimes_Z C)$-module, is Zariski locally free rank
            $1$.

        \item \label{thm:azumaya-local-8}
            The natural action $D \to \End_{k}(R)$ factors through
            $\End_{R'}(R)$, where $R$ is viewed as an $R'$-module via
            $\varphi_{R/k}$. For any given section $Z \to R'$ of $R' \to Z$, $R$
            is then naturally an $(D \otimes_Z R')$-module. The centraliser and
            double centraliser of $R$ with such a module structure are given by
            \begin{align*}
                Z_{D \otimes_Z R'}(R) \simeq (R')^\opp \simeq R',
                \quad\text{and}\quad
                Z_{D \otimes_Z R'}^2(R) = \End_{R'}(R).
            \end{align*}
            So the induced left $Z_{D \otimes_Z R'}(R)$-module structure on $R$
            is the same as the (right, equivalently the left) $R'$-module
            structure on $R$. Hence $R$ is free as a left $Z_{D \otimes_Z
            R'}(R)$-module of rank $p^d$, in particular, it is finitely
            generated.

        \item \label{thm:azumaya-local-9}
            Given any section $Z \to R'$ of $R' \to Z$, $R$ is a balanced $(D
            \otimes_Z R')$-module, i.e.,
            \[
                D \otimes_Z R' \simeq Z^2_{D \otimes_Z R'}(R) = \End_{R'}(R).
            \]
    \end{enumerate}
\end{theorem}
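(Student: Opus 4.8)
The plan is to reduce everything to the Poincar\'e--Birkhoff--Witt basis of $D=U_{k,R}(\Der_k(R))$ and then, for the Azumaya statement, to a fibrewise computation. First I would record that $D$ is $k$-free on the monomials $t^{a}\partial^{b}$ ($a,b\in\bfN^{d}$), that the only commutation relation is $[\partial_i,t_j]=\delta_{ij}$ --- whence in characteristic $p$ one has $[\partial_i^{p},t_j]=0=[\partial_i,t_j^{p}]$ --- and that $\partial_i^{p}$ acts on $R=k[t_1,\dots,t_d]$ as the zero derivation (a product of $p$ consecutive integers vanishes mod $p$). Writing a general element in the PBW basis and imposing commutation with each $t_i$ (resp.\ each $\partial_i$) forces, term by term, the $\partial$-exponents (resp.\ the $t$-exponents) to be divisible by $p$; counting the reduced monomials $t^{a}\partial^{b}$ with $0\le a_i,b_i<p$ then gives \ref{thm:azumaya-local-1} and \ref{thm:azumaya-local-2}, including the freeness of $D$ over $Z$ of rank $p^{2d}$ and over $A$ of rank $p^{d}$. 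Part~\ref{thm:azumaya-local-3} is a translation of this: since $\partial_i^{[p]}=0$ in $\Der_k(R)$, the universal map $\psi$ of Corollary~\ref{cor:p-linear-universal} sends $\partial_i$ to $\partial_i^{p}\in D$, so $\psi'$ (resp.\ $\psi''$) is the evident map onto $Z=R'[\partial_1^{p},\dots,\partial_d^{p}]$ (resp.\ onto $A=R[\partial_1^{p},\dots,\partial_d^{p}]$), an isomorphism by the algebraic independence of the $t_i^{p},\partial_i^{p}$; faithful flatness and finite presentation of $A/Z$ then follow from those of $\varphi_{R/k}\colon R'\to R$ (finite free of rank $p^{d}$) via the push-out square and base change.

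The heart of the theorem is \ref{thm:azumaya-local-4}. Since $D$ is finite free over $Z$ of rank $p^{2d}=(p^{d})^{2}$, it is Azumaya over $Z$ iff the map $D\otimes_Z D^{\opp}\to\End_Z(D)$, $\delta\otimes\delta'\mapsto(x\mapsto\delta x\delta')$, is an isomorphism; both sides are finite free of the same rank $p^{4d}$ over $Z$, so this may be checked on fibres, and it suffices to show that $D\otimes_Z\kappa(\frakm)$ is central simple over $\kappa:=\kappa(\frakm)$ for every maximal ideal $\frakm\subseteq Z$. Passing to an algebraic closure $\overline\kappa$, the images of $t_i^{p}$ and $\partial_i^{p}$ become $p$-th powers $c_i^{p},e_i^{p}$; replacing $t_i$ by $t_i-c_i$ and $\partial_i$ by $\partial_i-e_i$ (central scalars, so the relations $[\partial_i,t_j]=\delta_{ij}$ survive while $t_i^{p}$ and $\partial_i^{p}$ become $0$, by Frobenius-additivity among the commuting $t_i$, resp.\ $\partial_i$) identifies $D\otimes_Z\overline\kappa$ with the restricted Weyl algebra $\overline\kappa\langle t_i,\partial_i\rangle/([\partial_i,t_j]-\delta_{ij},\,t_i^{p},\,\partial_i^{p})$, which --- recalling $D=W_1(k)^{\otimes_k d}$ and $Z=Z_1^{\otimes_k d}$ --- is the $d$-fold $\overline\kappa$-tensor power of the case $d=1$. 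For $d=1$ this algebra acts on $V=\overline\kappa[t]/(t^{p})$ by multiplication and by $\partial\colon t^{i}\mapsto it^{i-1}$; $V$ is simple (applying $\partial$ to the bottom term of a nonzero element lowers its degree, so $1$ lies in any nonzero submodule), hence by Jacobson density the algebra surjects onto $\End_{\overline\kappa}(V)=\mathrm{M}_p(\overline\kappa)$, and, being of $\overline\kappa$-dimension $p^{2}$ by~\ref{thm:azumaya-local-1}, it equals $\mathrm{M}_p(\overline\kappa)$; tensoring gives $D\otimes_Z\overline\kappa\cong\mathrm{M}_{p^{d}}(\overline\kappa)$, and central-simplicity descends from $\overline\kappa$ to $\kappa$. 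I expect this fibrewise identification to be the main obstacle --- not deep, but the translation argument, the descent of ``central simple'' along $\overline\kappa/\kappa$, and the compatibility of the tensor decompositions of $D$ and of $Z$ all need to be set up with care.

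For \ref{thm:azumaya-local-5}--\ref{thm:azumaya-local-9} I would argue uniformly, viewing $D$ as a $(D\otimes_Z A)$-module by $(\delta\otimes a)\cdot x=\delta xa$ and, in parallel, $R$ as a $(D\otimes_Z R')$-module. An additive endomorphism of $D$ commuting with left multiplication by $D$ is right multiplication by an element of $D$ (as ${}_DD$ is free of rank $1$), and commuting also with right multiplication by $A$ forces that element into $Z_D(A)=A$ by~\ref{thm:azumaya-local-2}; hence $Z_{D\otimes_Z A}(D)=A^{\opp}\cong A$, its centraliser is $\End_A(D)$, and $D$ is $A$-free of rank $p^{d}$ by~\ref{thm:azumaya-local-2} --- this is~\ref{thm:azumaya-local-5}. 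The balanced statement~\ref{thm:azumaya-local-6} is that the homothety map $D\otimes_Z A\to\End_A(D)$ is an isomorphism: since $D\otimes_Z A$ is Azumaya over $A$ (base change of~\ref{thm:azumaya-local-4}) and $D$ is a faithful, finite projective $A$-module, the standard double-centraliser (Morita) theory for Azumaya algebras gives that $D$ is balanced over $D\otimes_Z A$, and that \'etale-locally $D\otimes_Z A\cong\mathrm{M}_{p^{d}}(A)$ with $D$ corresponding to a projective $A$-module of rank $p^{d}/p^{d}=1$ --- which, applied to an arbitrary splitting $D\otimes_Z C\simto\End_C(P)$, is exactly~\ref{thm:azumaya-local-7}. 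That $D$ is genuinely $(A\otimes_Z A)$-free of rank $1$ follows by exhibiting $\partial_1^{p-1}\cdots\partial_d^{p-1}$ as a cyclic vector (left and right multiplications by the $t_i\in A$ lower the $\partial_i$-exponents one at a time), so that $A\otimes_Z A\to D$ is a surjection of $Z$-free modules of equal rank, hence an isomorphism. Finally, \ref{thm:azumaya-local-8}--\ref{thm:azumaya-local-9} repeat this with $R$ in place of $D$ and $R'$ in place of $A$: the action $D\to\End_k(R)$ factors through $\End_{R'}(R)$ because each $\partial_i$ annihilates $\varphi_{R/k}(R')=k[t_1^{p},\dots,t_d^{p}]$ and is therefore $\varphi_{R/k}(R')$-linear; for a section $Z\to R'$ of $R'\to Z$ (forced here to send $\partial_i^{p}\mapsto 0$) this makes $R$ a $(D\otimes_Z R')$-module, and --- using $\bigcap_i\ker(\partial_i\colon R\to R)=k[t_1^{p},\dots,t_d^{p}]\cong R'$ --- the same computation yields $Z_{D\otimes_Z R'}(R)\cong R'$, double centraliser $\End_{R'}(R)$, with $R$ free of rank $p^{d}$ over $R'$; the balanced conclusion $D\otimes_Z R'\simto\End_{R'}(R)$ is again the Morita fact, since $D\otimes_Z R'$ is Azumaya over $R'$ and $R$ is a faithful, finite free $R'$-module, i.e.\ a splitting module of rank $p^{d}$.
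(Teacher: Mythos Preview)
Your proof is correct. For parts \ref{thm:azumaya-local-1}--\ref{thm:azumaya-local-3}, \ref{thm:azumaya-local-5}, and the cyclic-vector half of \ref{thm:azumaya-local-7} you do exactly what the paper does (PBW basis, the commutator identities $[\partial_r^n,t_{r'}]=\delta_{rr'}n\partial_r^{n-1}$, and $\partial_1^{p-1}\cdots\partial_d^{p-1}$ as generator of $D$ over $A\otimes_Z A$). The genuine divergences are in \ref{thm:azumaya-local-4}, \ref{thm:azumaya-local-6}, and \ref{thm:azumaya-local-9}.

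For \ref{thm:azumaya-local-4} the paper never passes to $\overline\kappa$ or reduces to $d=1$. It works directly in $D\otimes_Z\kappa$ with the operators $X_r=1\otimes t_r-t_r\otimes1$ and $Y_r=\partial_r\otimes1-1\otimes\partial_r$ in $(D\otimes_Z\kappa)^e$: acting by a suitable $X^{J_0}$ and then $Y^{I_0}$ on any nonzero element of a two-sided ideal extracts the scalar $I_0!J_0!c_{I_0,J_0}\neq0$ (exponents are $<p$), so the ideal is everything. Your translation-plus-density argument is a valid alternative and has the virtue of exhibiting the fibre explicitly as $\mathrm{M}_{p^d}(\overline\kappa)$; the price is the extra descent step along $\overline\kappa/\kappa$ and the bookkeeping for the tensor decomposition $W_d=W_1^{\otimes d}$, whereas the paper's argument is uniform in $\frakm$ and stays over $\kappa$.

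For \ref{thm:azumaya-local-6} and \ref{thm:azumaya-local-9} the paper again works fibrewise: it shows $D\otimes_A\kappa$ (resp.\ $R\otimes_{R'}\kappa$) is a \emph{simple} $(D\otimes_Z\kappa)$-module by the same $X_r$-degree-lowering trick, and then invokes the Bourbaki criterion that a simple module finitely generated over its bicommutant is balanced. You instead use the general principle that a homomorphism of Azumaya $A$-algebras of equal rank is an isomorphism, so an Azumaya algebra of rank $n^2$ acting on a locally free module of rank $n$ is automatically split by it. This is slicker and makes the logical dependence on \ref{thm:azumaya-local-4} transparent; the paper's route is more hands-on and in effect reproves the splitting fibre by fibre. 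Your sketch of the second half of \ref{thm:azumaya-local-7} is a bit compressed compared to the paper's explicit base-change-to-$A\otimes_Z C$-then-Morita-then-descent argument, but the ingredients are the same.

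One small correction: in \ref{thm:azumaya-local-8}--\ref{thm:azumaya-local-9}, a section $Z\to R'$ of $R'\to Z$ is \emph{not} forced to send $\partial_i^p\mapsto0$; since $Z\cong R'[\partial_1^p,\dots,\partial_d^p]$ is a polynomial ring over $R'$, the $\partial_i^p$ may go to arbitrary elements of $R'$. Your Morita argument does not actually use that parenthetical, so the slip is cosmetic.
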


\begin{proof}
    \begin{enumerate}[wide]
        \item Clearly, $D$ is a free $k$-module with basis $t^{I}\partial^J$,
            $I, J \in \bfZ^d$, with multi-index convention
            \[
                t^I = t_1^{i_1}\cdots t_d^{i_d},\quad \partial^I =
                \partial_1^{i_1} \cdots \partial_d^{i_d},\quad I = (i_1, \ldots,
                i_d) \in \bfZ^d,
            \]
            is used. Note that for every $f \in R$, $1 \leq r \leq p-1$, and $n
            \in \bfN$, we have
            \begin{equation*}
                f \cdot \partial^n_r = \sum_{i = 0}^n (-1)^i \binom{n}{i}
                \partial^{n - i}_r \cdot \partial^i_r(f), \quad
                \partial^n_r \cdot f = \sum_{i = 0}^n \binom{n}{i}
                \partial^i_r(f) \cdot \partial^{n - i}_r.
            \end{equation*}
            So in particular $1 \leq r, r' \leq d$, $n \geq 1$, we have
            \begin{equation}
                [\partial_r^n, t_{r'}] = \delta_{rr'} \cdot n \partial^{n-1}_r,
                \quad\text{and}\quad
                [\partial_{r'}, t_r^n] = \delta_{rr'} \cdot n t^{n-1}_r,
                \label{eq:t-partial-partial-t}
            \end{equation}
            where $\delta_{rr'}$ is the Kronecker delta.

            Clearly, $Z$ is contained in the centre. Let $x := \sum_{I, J} c_{I,
            J} t^I \partial^J \in D$ be arbitrary, $c_{I, J} \in k$. According
            to \eqref{eq:t-partial-partial-t},
            \begin{equation}{}
                [x, t_{r}]
                = \sum_{I, J} c_{I, J} j_r t^I \partial^{J - e_r},
                \quad\text{and}\quad
                [\partial_{r}, x]
                = \sum_{I, J} c_{I, J} i_r t^{I - e_r} \partial^{J},
                \label{eq:t-partial-partial-t-2}
            \end{equation}
            where $e_r$ is the multi-index with $r$-th position $1$ and
            elsewhere $0$. Therefore if $x$ lies in the centre we must have that
            $i_r$ and $j_r$ are multiples of $p$ for all $1 \leq r \leq d$,
            i.e., $x \in Z$. As an $Z$-module,
            \[
                D = \bigoplus_{I, J \in \{0, \ldots,p-1\}^d}Z \cdot
                t^I\partial^J
            \]
            is free of rank $p^{2d}$.

        \item It follows from the same computation as in the first step.

        \item We know that for each $\partial_j$, its $p$-th iteration as a
            derivation is zero. So in this situation, those two maps are given
            by
            \[
                \psi'': \Der_k(R) \otimes_{R, \varphi_R} R \to D,\quad
                \partial_i \otimes t_j \mapsto \iota_R(t_j) \psi(\partial_j)
                = t_j \partial_i^p
            \]
            and
            \[
                \psi': \Der_k(R) \otimes_{R, w} R' \to D,\quad \partial_i
                \otimes t_j' \mapsto
                \iota_R(\varphi_{R/k}(t_j'))\psi(\partial_i) = t_j^p
                \partial_i^p
            \]
            Then we easily see the induced map are ring isomorphisms.

        \item Fix a maximal ideal $\frakm \subseteq Z$. It is clear that the
            centre of $D \otimes_Z \kappa$ is $\kappa$ as is proved in the first
            part.

            For any $1 \leq r \leq p -1$, set $X_r:= (1 \otimes t_r - t_r
            \otimes 1) \in D^e$ and $Y_r = (\partial_r \otimes 1 - 1 \otimes
            \partial_r) \in D^e$. Clearly $X_r \cdot X_s = X_s \cdot X_r$ and
            $Y_r \cdot Y_s = Y_s \cdot Y_r$ for any $1 \leq r, s \leq p -1$. For
            any multi-index $I = (i_1, \ldots, i_d)$, write $X^I := X_1^{i_1}
            \cdot \cdots  \cdot X_d^{i_d}$ and $Y^I := Y_1^{i_1} \cdot \cdots
            \cdot Y_d^{i_d}$ as usual. Then \eqref{eq:t-partial-partial-t-2}
            implies that
            \begin{equation}
                X^J \cdot t^I \partial^J = J! t^I,
                \quad \text{and}\quad
                Y^I \cdot t^I = I!.
                \label{eq:t-acts-as-derivation-on-partial}\\
            \end{equation}

            Let $0 \neq \fraka \subseteq D \otimes_Z \kappa$ be a two sided
            ideal and $0 \neq x = \sum c_{I,J} t^I \partial^J \in \fraka$ with
            $c_{I,J} \in \kappa$. Denote by $\overline X$ and $\overline Y$ the
            image of $X$ and $Y$ in $(D \otimes_Z \kappa)^e$. Let $J_0$ be one
            of the indices such that $|J_0| = \max \{ |J|: c_{I,J} \neq 0\}$.
            Then $\overline X^{J_0} \cdot x = \sum_I J_0! c_{I,J_0} t^I \in
            \fraka$ as $\fraka$ is a two-sided ideal. Let $I_0$ be one of the
            indices such that $| I_0 | = \max \{ |I|: c_{I,J_0} \neq 0\}$. Then
            $\overline{Y}^{I_0} \cdot \overline{X}^{J_0} \cdot x = I_0! J_0!
            c_{I_0,J_0}$, which non-zero since $|I_0|, |J_0| \leq (p-1)!^d$. So
            $1 \in \fraka$. So we conclude that $D \otimes_Z \kappa$ is central
            simple over $\kappa$ and that $D$ is Azumaya over $Z$.

        \item Let $\alpha \in \End_{\bfZ}(D)$ be $(D \otimes_Z A)$-linear. Then
            for any $x \in D$, we have $\alpha(x) = \alpha(x \cdot 1 \cdot 1) =
            x \cdot \alpha(1) \cdot 1 = x\cdot \alpha(1)$. Hence we get a map
            $Z_{D \otimes_Z A}(D) \to D^\opp$, $\alpha \mapsto \alpha(1)$.
            Moreover, for all $a \in A$, we have $\alpha(a) = \alpha(a \cdot 1
            \cdot 1) = \alpha(1 \cdot 1 \cdot \alpha)$, i.e., $a \cdot \alpha(1)
            = \alpha(1) \cdot a$. Hence $\alpha(1) \in Z_D(A) = Z_D^2(R) = A$ by
            the previous result. Conversely, any $A$-linear endomorphism of $D$
            is clearly $(D \otimes_Z A)$-linear. So $Z_{D \otimes_Z A}(D) \simeq
            A$. Then $Z^2_{D \otimes_Z A}(D) = \End_A(D)$ follows.

        \item Now we are going to show that $D \otimes_Z A \to \End_A(D)$,
            $\delta \otimes a \mapsto (x \mapsto \delta x a)$ is an
            isomorphism.
            First observe that both sides, as an $A$-module, is free of rank
            $p^{2d}$. Thus it suffices to show the surjecitvity. Toward this, it
            suffices to show for every maximal ideal $\frakm$ of $A$, the
            induced map
            \[
                D \otimes_Z A \otimes_{A} \kappa \simeq D \otimes_Z \kappa
                \longrightarrow
                \End_{A} \left(D \right) \otimes_{A} \kappa \simeq
                \End_{\kappa} \left(D \otimes_{A} \kappa \right)
            \]
            is surjective, where  $\kappa:= \kappa(m) = A/\frakm$ is the residue
            field of $A$ at $\frakm$. It is easy to see that $D \otimes_Z \kappa
            $ is the quotient of $W_d(\kappa)$ by the two-sided ideal generated
            by $t_r^p - a_r$ and  $\partial_r^p - b_r$, $1 \leq r \leq d$,
            where $a_r$ and $b_r$ are the images of $t^p_r \in Z$ and
            $\partial_r^p \in Z$ in $\kappa$, under $Z \to A \to \kappa$,
            respectively. Similarly, $D \otimes_A$ is the quotient of the
            polynomial ring $\kappa[\partial]$ by the ideal generated
            $\partial_r^p - b_r$, $1 \leq r \leq d$. So in particular, as
            $\kappa$-modules,
            \[
                D \otimes_Z \kappa \simeq \bigoplus_{I, J \in \{ 1, \ldots, p -
                1\}^d } \kappa \cdot t^I \partial^J,
                \text{\quad and \quad}
                D \otimes_A \kappa \simeq \bigoplus\limits_{I, J \in
                \{ 1, \ldots, p-1\}^d} \kappa \cdot \partial^J.
            \]

            We have already know that $D \otimes_A \kappa$ is finitely generated
            over the double centraliser $Z_{D \otimes_Z \kappa }^2(D) = \kappa$
            from the preceding result. So according to \cite[VIII, \S5, \frno3,
            Cor.\ 4]{zbMATH05948094}. It suffices to show that $(D \otimes_{A}
            \kappa)$ is a \emph{simple} left $(D \otimes_Z \kappa)$-module.
            Again denote by $\overline{X}$ the image of $X$ in $D \otimes_Z
            \kappa$. For any $0 \neq x = \sum c_I \partial^I \in D \otimes_A
            \kappa$, $c_I \in \kappa$, denote by $I_0$ one of the indices such
            that $|I_0| = \max \{ |I|: c_I \neq 0 \}$. Then we have
            $\overline{X}^{I_0} \cdot x = I_0!c_{I_0} \neq 0$ thanks to
            \eqref{eq:t-acts-as-derivation-on-partial}. So $(I_0!c_{I_0})\inv
            \overline{X}^{I_0} \cdot x = 1$. Therefore $(D \otimes_Z \kappa)
            \cdot x = D \otimes_A \kappa$. Hence $D \otimes_A \kappa$ is a
            simple $(D \otimes_Z \kappa)$-module.

            Since $Z \to A$ is faithfully flat, we can deduce from the fact that
            $D \otimes_Z A \simeq \End_A(D)$ that $D$ is an Azumaya over $Z$ and
            $D \otimes_Z D^\opp \simeq \End_Z(D)$.

        \item We already know that $X_r \in A \otimes_Z A$. We first claim that
            $\partial^{\underline{\smash{p-1}}} := \partial_1^{p-1} \cdot \cdots
            \cdot \partial_d^{p-1}$ generates $D$ as an $(A \otimes_Z
            A)$-module. Acturally, every $x \in D$ can be written as $x = \sum
            a_I \partial^I$, with $I \in \{1, \ldots, p-1\}^d$. Then because of
            \eqref{eq:t-partial-partial-t-2},
            \begin{equation}
                \sum_I \frac{I!}{(\underline{\smash{p-1}})!}
                (a_I \otimes 1) {X^{\underline{\smash{p-1}}  -I}} \cdot
                \partial^{\underline{\smash{p-1}}} = x.
            \end{equation}
            Therefore, the map $A \otimes_Z A \to D$, given by sending $\alpha
            \in A \otimes_Z A$ to $\alpha \cdot
            \partial^{\underline{\smash{p-1}}}$ is a surjective morphism of $(A
            \otimes_Z A)$-modules. In particular, this morphism reduces to a
            morphism of \emph{left} $A$-modules, whose source and target are
            both free of rank $p^d$. Hence it is an isomorphism.

            Now suppose that we are given a commutative $Z$-algebra $C$ such
            that $D \otimes_Z C \simeq \End_C(P)$ where $P$ is a Zariski locally
            free $c$-module of finite rank. Considering the base change of this
            isomorphism along the map $C \to A \otimes_Z C$, taking notice of
            that $D \otimes_Z A \simeq \End_A(D)$, we obtain a ring isomorphism
            \[
                \End_{A \otimes_Z C}(P \otimes_C (A \otimes_Z C))
                \simeq \End_{A \otimes_Z C}(D \otimes_A (A \otimes_Z C)),
            \]
            where both sides are endomorphism rings of Zarikski locally free $(A
            \otimes_Z C)$-modules. In addition we know that $D \otimes_A (A
            \otimes_Z C))$ is free of rank $1$ as an $(A \otimes_Z A) \otimes_A
            (A \otimes_Z C)$-module, i.e., as an $(A \otimes_Z C)\otimes_C (A
            \otimes_Z C)$-module. So Morita theorem implies that $P \otimes_C (A
            \otimes_Z C)$ is Zariski locally free of rank $1$ as $(A \otimes_Z
            C) \otimes_C (A \otimes_Z C)$. Since $C \to A \otimes_Z C$ is
            faithfully flat and finitely presented, we know $P$ is Zariski
            locally free of rank $1$ as an $(A \otimes_Z C)$-module by descent.

        \item Similarly as in the previous result, every $(D \otimes_Z
            R')$-linear endomorphism $\alpha$ of $R$ is completely determined by
            its image at $1 \in R$ (since $R \subseteq D$), and that this image
            has to be in $k[t^p]$ (since $0 = \alpha(0) = \alpha(x(1)) = x
            (\alpha(1))$, for any $x = \partial^I/I!$) or equivalently, coming
            from $R'$ via $\varphi_{R/k}$.

        \item The proof is similar to that of \ref{thm:azumaya-local-6}. First
            of all, as $R'$-modules, both sides are clearly free of rank
            $p^{2d}$. The strategy is to show that for any maximal ideal
            $\frakm$ of $R'$ with $\kappa:= R'/\frakm$, the reduction $D
            \otimes_Z \kappa \to \End_{\kappa}(R \otimes_{R'} \kappa)$ of the
            natural map $D \otimes_Z R' \to \End_{R'}(R)$ is an isomorphism. To
            show this, one shows that $R \otimes_{R'} \kappa$ is a simple $(D
            \otimes_Z \kappa)$-module; and for this, one observes that for any
            $0 \neq g \in R \otimes_{R'} \kappa \simeq \bigoplus_{I \in \{1,
            \ldots, p-1\}^d} \kappa \cdot t^I$, there is always one $x$ of the
            form $\partial^I/I!$, such that $x(g) = 1$.
    \end{enumerate}
\end{proof}

\section{Global theory: Lie algebroids and modules over them}
\label{sec:D}

Let $f:  X \to S$ be a smooth morphism of schemes. We will interchangeably write
$\cDer(X/S)$ and $\Theta_{X/S}$ for the tangent sheaf.

In this section, we would like to generalise the previous results to a geometric
setting. To us, an \emph{$(f\inv\scrO_S, \scrO_X)$-Lie algebra}, usually called
a \emph{Lie algebroid}, is a quasi-coherent $\scrO_X$-modules $L$, together with
maps $[-,-]:L \times L \to L$, and $\rho: L \to \cDer(X/S)$, such that locally
over $U \subseteq X$, $(L(U), [-, -], \rho)$ is an $\left((f\inv\scrO_S)(U),
\scrO_X(U)\right)$-Lie algebra. Moreover, in case $S$ is of characteristic $p >
0$, i.e., $p\cdot \scrO_S = 0$, we similarly have the notion of \emph{restricted
$(f\inv \scrO_S, \scrO_X)$-Lie algebra} or \emph{restricted Lie algebroid}, $(L,
[-,-], -^{[p]}, \rho)$, which Zariski locally over $U \subseteq X$ is a
restricted $((f\inv \scrO_S)(U), \scrO_X(U))$-Lie algebra. For any section
$\lambda \in \scrO_S(S)$, we can similarly define $\leftidx{^\lambda}L$ (see
Example~\ref{exa:restricted-k-R-Lie}.\ref{exa:restricted-k-R-Lie-2}). Moreover,
the construction of the universal enveloping algebra glues. Hence we obtain a
sheaf $U_{f\inv \scrO_S, \scrO_X}(L)$ of non-commutative rings, satisfying the
universal property as one may expect.

We can also talk about $L$-modules as in the local case. Precisely,
an \emph{$L$-module} structure on a quasi-coherent $\scrO_X$-module $E$ is a
morphism $\sigma_L: L \to \End_{f\inv\scrO_S}(E)$, which is a morphism of left
$\scrO_X$-modules as well as a morphism of $(f\inv\scrO_S)$-Lie algebras, and
moreover Zariski locally \eqref{eq:compatible} holds. Then
Corollary~\ref{cor:universal-property-of-U(L)} implies that the category of
$L$-modules is equivalent to the category of left $U_{f\inv\scrO_S,
\scrO_X}(L)$-modules. In particular, we are interested in $L$-modules that are
finite locally free as $\scrO_X$-modules. Denote by  $\catVect_{X/S, r;L} \to
(\catSch/S)$ the stack of such locally free sheaves. Moreover, we write
$\catLoc_{X/S, r}:= \catVect_{X/S, r; \cDer(X/S)}$, whose objects are called
\emph{local systems} of rank $r$ on $X/S$.

\begin{example}\label{exa:typical-example}
    We always have the following two cases in mind.
    \begin{enumerate}
        \item $L:= K^\vee:=\cHom(K, \scrO_X)$ is a \emph{trivial} Lie algebroid,
            i.e., with zero bracket, zero anchor (and zero $p$-operator if in
            characteristic $p$), where $K$ is a finite locally free
            $\scrO_X$-module.

        \item $L := \leftidx{^\lambda}\cDer(X/S)$ is the (restricted, if in
            characteristic $p$) tangent Lie algebroid twisted by a section
            $\lambda \in \scrO_S(S)$. Most importantly, we will study the cases
            where $\lambda = 0$ and $\lambda = 1$
    \end{enumerate}
    In the first case, $U_{f\inv\scrO_S, \scrO_X}(K^\vee) = \Sym^\bullet K^\vee$
    is the symmetric algebra according to
    Example~\ref{exa:k-R-Lie-Mod-and-U}.\ref{exa:k-R-Lie-Mod-and-U-1}. In the
    second case, $\leftidx{^\lambda}D_{X/S}:= U_{f\inv \scrO_S,
    \scrO_X}\left(\leftidx{^\lambda}\cDer(X/S)\right)$ is the sheaf of
    $\lambda$-twisted \emph{crystalline differential operators}.%
    \footnote{
        We take this as our definition of the sheaf of differential operators.
        In case $\lambda = 1$, this sheaf is different from the one defined in
        \cite[\S16]{EGAIV4}, but coincide with the one defined in
        \cite{zbMATH03467279}, where it is called the sheaf of divided power
        differential operators. Sometimes it is also called the sheaf of
        differential operators without divided powers.
    }
    We will simply write $D_{X/S}$ for $\leftidx{^1}D_{X/S}$, and of course
    $\leftidx{^0}D_{X/S}= \Sym^\bullet \Theta_{X/S}$.

    Recall that a \emph{$\lambda$-connection} on $E$ is a morphism $\nabla: E
    \to E \otimes \Omega_{X/S}^1$ of $(f\inv\scrO_S)$-modules, such that for
    local sections $f$ of $\scrO_X$ and $e$ of $E$, $\nabla(fe) = \lambda e
    \otimes \rmd f + f\nabla(e)$. Such a connection extends to a homomorphism
    $\nabla: E\otimes \Omega_{X/S}^1 \to E \otimes \Omega_{X/S}^2$, $e \otimes
    \omega \mapsto \lambda e \otimes \rmd \omega + \nabla(e) \wedge \omega$ for
    local sections $e$ of $E$ and $\omega$ of $\Omega_{X/S}^1$. A
    $\lambda$-connection is said to be \emph{flat} or \emph{integrable} if the
    composition $E \to E \otimes \Omega_{X/S}^1 \to E \otimes \Omega_{X/S}^2$ is
    zero. Conventionally, a flat $1$-connection is simply called a \emph{flat
    connection} and a flat $0$-connection is called a \emph{Higgs field}.

    It is easy to see that a flat $\lambda$-connection is the same as an
    $\leftidx{^\lambda}\cDer(X/S)$-module. Actually, $\Omega_{X/S}^1$ is locally
    free, a $\lambda$-connection $\nabla$ gives rise to an $\scrO_X$-morphism
    $\leftidx{^\lambda}\cDer(X/S) \to \cEnd_{f\inv\scrO_S}(E)$. The flatness
    condition implies that this map is a morphism of $(f\inv\scrO_S)$-Lie
    algebras. One then check this makes $E$ and
    $\leftidx{^\lambda}\cDer(X/S)$-module. Then by universal property of the
    universal enveloping algebra flat $\lambda$-connections are also the same as
    $\leftidx{^\lambda}D_{X/S}$-modules.
\end{example}

\subsection{$K$-Higgs Bundles}
Fix a locally free sheaf $K$ of rank $\rk(K)>0$. Equip $L := K^\vee$ with the
trivial Lie algebroid structure. Denote by $\pi$ the canonical projection
$\bfV(K) := \cSpec_X \Sym^\bullet L \to X$. For example, we may take
$K=\Omega_{X/S}^1$ in case $X/S$ is of relative dimension $d$. Then $\bfV(K) =
\cotsp{X/S}$ is the cotangent bundle of $X/S$, and $L$ is
$\leftidx{^0}\cDer(X/S)$.

A \emph{$K$-Higgs filed} (or an \emph{$L$-co-Higgs field}) on a quasi-coherent
$\scrO_X$-module $E$ is just
\begin{enumerate}
    \item a $(\Sym K^\vee)$-module structure on $E$; or equivalently,
    \item an isomorphism $E \simeq \pi_\ast \widetilde{E}$ for some
        quasi-coherent $\scrO_{\bfV(K)}$-module; or equivalently,
    \item a morphism $\sigma: K^\vee \to \cEnd_{\scrO_X}(E)$ of
        $\scrO_X$-modules, such that $\sigma(x)\circ\sigma(y)
        -\sigma(y)\circ\sigma(x)= 0 \in \cEnd_{\scrO_X}(E)(U)$ for any local
        sections $x$ and $y$ of $K^\vee$ over $U \subseteq X$; or equivalently,
    \item a morphism $\theta: E \to E \otimes K$ of $\scrO_X$-modules such
        that the composition $E \to E \otimes K \xrightarrow{\theta \otimes \id}
        E \otimes K \otimes K \to E \otimes \wedge^2 K$ is zero.
\end{enumerate}

A \emph{$K$-Higgs module} $(E, \theta)$ is an quasi-coherent $\scrO_X$-module
$E$ with a $K$-Higgs field $\theta$. If in addition $E$ is locally free of rank
$r>0$, we call it a \emph{$K$-Higgs bundle} of rank $r$.
If $K = \Omega_{X/S}^1$, we just call it a Higgs
module/bundle. Denote by $\catHiggs_{X/S,r;K} \to (\catSch/S)$ the stack of
$K$-Higgs bundles on $X/S$ of rank $r$. In other words, $\catHiggs_{X/S, r;K} :=
\catVect_{X/S, r; \leftidx{^0}L}$.
Finally, set $\catHiggs_{X/S,r} := \catHiggs_{X/S, r; \Omega_{X/S}^1}$.

\subsection{The (generalised) $p$-curvature of a restrictired Lie algebroid
module}\label{ssec:p-curv}
Suppose that $S$ is of characteristic $p>0$. Let $L:= (L,[-,-], -^{[p]}, \rho)$
be a \emph{restricted} Lie algebroid and let $U(L):= U_{f\inv \scrO_S,
\scrO_X}(L)$ be its enveloping algebra with natural map $\iota:L \to U(L)$.
Suppose moreover that $L$ is finite locally free.%
\footnote{
    This assumption is not essential but makes the exposition simpler.
}
Let $K:=L^\vee:=\cHom(L, \scrO_X)$ be its $\scrO_X$-module dual.
Then Proposition~\ref{cor:p-linear-universal} implies that we have a morphism of
sheaves of abelian groups
\begin{equation}
    \psi: L \longrightarrow U(L),
    \quad \psi(x) = (\iota(x))^{p} - \iota(x^{[p]})
    \label{eq:p-curv-genuine}
\end{equation}
for any local section $x$ of $L$. This morphism, according to
Corollary~\ref{cor:p-linear-universal}, has the following properties.
\begin{enumerate}[wide]
    \item It is $\Fr_X$-linear, hence defines a morphism
        \begin{equation*}
            \Fr_X^\ast L \longrightarrow U(L)
        \end{equation*}
        of sheaves (left) $\scrO_X$-modules; This map has image in the
        centraliser of $\scrO_X$ inside $U(L)$. Moreover,
        this centraliser is a sheaf of \emph{commutative} rings. Hence
        we obtain a map
        \begin{equation}
            \Sym^\bullet \left(\Fr_X^\ast L\right) \longrightarrow
            Z_{U(L)}(\scrO_X) \subseteq U(L) .
            \label{eq:p-curvature-restriction}
        \end{equation}

    \item Adjointly ($F_{X/S}^\ast \dashv F_{X/S, \ast}$), we have a map of
        $\scrO_{X'}$-modules
        \begin{equation*}
            w^\ast L \longrightarrow F_{X/S, \ast} U(L).
        \end{equation*}
        The image of this map lies in the centre of $F_{X/S, \ast}U(L)$. Hence
        it further induces a map
        \begin{equation}
            \begin{tikzcd}[column sep = small]
                \Sym^\bullet (w^\ast L) \ar[r, "\psi"] &
                Z\left(F_{X/S,\ast} U(L)\right) \subseteq F_{X/S, \ast} U(L),
            \end{tikzcd}
            \label{eq:p-curv}
        \end{equation}
        where $Z$ stands for taking the centre of a sheaf of non-commutative
        rings.
\end{enumerate}

Therefore, given a $U(L)$-module structure $\nabla$ on a quasi-coherent
$\scrO_X$-module $E$, by restricting the action of $U(L)$ on $E$ to
$\Sym^\bullet (\Fr_X^\ast L)$ via \eqref{eq:p-curvature-restriction}, we obtain
an $(\Fr_X^\ast K)$-Higgs field on $E$. This $(\Fr_X^\ast K)$-Higgs field is
called the \emph{$p$-curvature} of $\nabla$. Similarly, the restriction of the
action of $F_{X/S, \ast} U(L)$ on $F_{X/S,\ast}E$ to $\Sym^\bullet (w^\ast L)$
via \eqref{eq:p-curv}, gives an $(w^\ast K)$-Higgs field on $F_{X/S, \ast} E$,
which is locally free of rank $p^d$ in case $X/S$ is smooth of relative
dimension $d$. In this way, we obtain morphisms
\begin{align}
    \catVect_{X/S, r;L} & \longrightarrow \catHiggs_{X/S, r; \Fr_X^\ast
    K}, \label{eq:K-p-curv} \\
    \text{and,\quad}
    \catVect_{X/S, r;L} &
    \longrightarrow \catHiggs_{X/S, p^d r; w^\ast K}, \label{eq:K-p-curv'}
\end{align}
of stacks over $S$.

Moreover, \eqref{eq:p-curv} realises $F_{X/S, \ast}U(L)$ a quasicoherent sheaf
on $\bfV(w^\ast K)$, which we denoted by $\scrU(L)$.

\begin{example}
    In case $L = \cDer(X/S)$, we obtain the usual notion of $p$-curvature for a
    flat conneciton, cf., \cite[\S5]{zbMATH03350023}. In case $L =
    \leftidx{^\lambda}\cDer(X/S)$, where $\lambda \in \scrO_S(S)$, we obtain the
    $p$-curvature defined by \cite[Definition~3.1]{zbMATH01588211}.
\end{example}

\subsection{The Hitchin morphism for $K$-Higgs bundles}
Let $f: X \to S$ be a \emph{smooth} scheme of relative dimension $d$. So
$\Omega_{X/S}^1$ is a locally free $\scrO_X$-module of rank $d$.

Let $E$ be a $K$-Higgs bundle of rank $r$, with $K$-Higgs field $\theta: E \to E
\otimes K$. Such a Higgs field $\theta$ is a \emph{$K$-twisted endomorphism} of
$E$ in the sense of~\S\ref{ssec:characteristic-polynomial}. Therefore, we have
the characteristic polynomial
\[
    \chi_\theta(t) = t^r - a_1 t^{r - 1} + \cdots + (-1)^{r}a_r \in
    \Gamma\big(X, (\Sym^\bullet K)[t]\big),
    \quad a_i \in \Gamma(X, \Sym^i K),
\]
of $\theta$. Define on the site $(\catSch/S)_{\fppf}$ the presheaf
\[\begin{aligned}
        \bfB_{X/S, r; K}:
        (\catSch/S)^{\text{op}} \longrightarrow \catSet, \quad
        (T \to S) \longmapsto
        \bigoplus_{i=1}^r \Gamma\big(X_T, \Sym^i K_T \big),
\end{aligned}\]
where $X_T:= X \times_{S} T$ and $K_T$ is the pullback of $K$ to $X_T$. This is
called the \emph{Hitchin base}. We simply write $\bfB_{X/S, r}$ for $\bfB_{X/S,
r; \Omega_{X/S}^1}$. We identify a $T$-point $\chi = (a_1, \ldots,
a_r) \in \bfB_{X/S, r; K}(T)$ as the polynomial
\begin{equation*}
    \chi(t) = t^r + \cdots + (-1)^i a_i t^{r - i} + \cdots +
    (-1)^r a_r \in \Gamma\big(X_T, (\Sym^\bullet K_T)[t]\big)
\end{equation*}
with $a_i \in \Gamma\big(X_T,  \Sym^i K_T \big)$.
With this definition, we obtain a morphism
\begin{equation}
    c_{\Higgs}:=c_{\Higgs;K}:
    \catHiggs_{X/S, r;K} \longrightarrow \bfB_{X/S, r;K}
    \label{eq:c-dol}
\end{equation}
of stacks over $S$, which sends a $K$-Higgs bundle to (the coefficients of) the
characteristic polynomial of its $K$-Higgs field.

The following Propositoin is a slight generalisation of
\cite[Prop.~3.2]{zbMATH01588211}, \cite[Def.~3.16]{zbMATH06666980}, and
\cite[Prop.~3.1]{zbMATH06526258}.

\begin{proposition}\label{prop:reduced-char-polyn}
    Let $X/S$ be a smooth morphism of relative dimension $d$ and suppose that
    $S$ is of characteristic $p$. Let $L$ be a restricted Lie algebroid on $X/S$
    such that the underlying $\scrO_X$-module is finite locally free. Set
    $K:=\cHom(L, \scrO_X)$. Fix an integer $r >0$. Then there is a map
    \begin{align*}
        c_{\dR}:\catVect_{X/S, r; L} \to \bfB_{X'/S, r;w^\ast K},
    \end{align*}
    of stacks, rendering the diagram
    \[
        \begin{tikzcd}[column sep = tiny]
            & \catHiggs_{X'/S, p^d r; w^\ast K}
            \ar[rr, "\text{\eqref{eq:c-dol}}"', "c_{\Higgs, w^\ast K}"]
            &[-4.5em]& \bfB_{X'/S, p^d r; w^\ast K}
            \ar[dr, "F_{X/S}^\ast"] \\
            \catVect_{X/S,r;L}
            \ar[dr, "\text{\eqref{eq:K-p-curv}}"']
            \ar[rr, dotted, "{\exists\, c_{\dR}}"]
            \ar[ur, "\text{\eqref{eq:K-p-curv'}}"]
            && \bfB_{X'/S,r; w^\ast K}
            \ar[dr, "F_{X/S}^\ast"]
            \ar[ur, "{(-)}^{p^d}"'] 
            &&[-1.5em] \bfB_{X/S, p^d r; \Fr_X^\ast K} \\
            & \catHiggs_{X/S, r;\Fr_X^\ast K}
            \ar[rr, "c_{\Higgs; \Fr_X^\ast K}", "\text{\eqref{eq:c-dol}}"']
            && \bfB_{X/S, r; \Fr_X^\ast K}
            \ar[ur, "{(-)}^{p^d}"']
         \end{tikzcd}
     \]
     commutative.
 \end{proposition}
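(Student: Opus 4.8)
The plan is to reduce the statement to the local algebraic computation already encoded in Lemma~\ref{lem:char-descent}. The key point is that for an $L$-module $E$ (locally free of rank $r$), the $p$-curvature is, by construction in \S\ref{ssec:p-curv}, obtained by restricting the $U(L)$-action along the map $\psi$ of \eqref{eq:p-curv-genuine}; thus the $(w^\ast K)$-Higgs field on $F_{X/S,\ast}E$ furnished by \eqref{eq:K-p-curv'} and the $(\Fr_X^\ast K)$-Higgs field on $E$ furnished by \eqref{eq:K-p-curv} both come from the \emph{same} $\psi$, the first via the adjunction $F_{X/S}^\ast\dashv F_{X/S,\ast}$ over $X'$ and the second directly over $X$. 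The diagram to be filled in is entirely about characteristic polynomials, so everything is Zariski-local on $X$ and we may assume $L$ and $E$ are free $\scrO_X$-modules; then Lemma~\ref{lem:char-descent} applies verbatim on affine opens.

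First I would make the definition of $c_{\dR}$. Working locally, apply \eqref{eq:psi-L-A''} (equivalently \eqref{eq:psi-L-A'}) with $A:=\End_{\scrO_X}(E)$ to get, from the $L$-module structure, an $\scrO_{X'}$-linear map $E\to E\otimes_{\scrO_X}(w^\ast K)$, i.e.\ a $(w^\ast K)$-twisted endomorphism of $F_{X/S,\ast}E$ viewed appropriately; by Lemma~\ref{lem:char-descent} its characteristic polynomial descends to a polynomial $\chi\in\Sym^\bullet_{\scrO_{X'}}(w^\ast K)[t]$ of degree $r$, whose coefficients glue to a section over $X'$ and hence a $T$-point of $\bfB_{X'/S,r;w^\ast K}$. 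This gives $c_{\dR}$ functorially, since the formation of characteristic polynomials commutes with base change $T'\to T$ and with restriction to opens; naturality in $T$ is then automatic.

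Next I would check the four faces of the diagram, each of which is a routine matching of characteristic polynomials once $c_{\dR}$ is in hand. The top triangle: the $(w^\ast K)$-Higgs field of \eqref{eq:K-p-curv'} on $F_{X/S,\ast}E$, which has rank $p^d r$, is built by restricting the $F_{X/S,\ast}U(L)$-action along \eqref{eq:p-curv}; since $F_{X/S,\ast}E$ is, locally, $p^d$ copies of the underlying module of the $(w^\ast K)$-twisted endomorphism appearing in Lemma~\ref{lem:char-descent} (because $\psi$ lands in the centraliser of $\scrO_X$ and $F_{X/S}$ has degree $p^d$), its characteristic polynomial is $\chi^{p^d}$ — this is the commutativity of the top square composed with the left edge, and it exactly matches $F_{X/S}^\ast$ applied after $(-)^{p^d}$. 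The bottom triangle is the same computation carried out over $X$ rather than $X'$: the $(\Fr_X^\ast K)$-Higgs field of \eqref{eq:K-p-curv} has characteristic polynomial $F_{X/S}^\ast\chi'$ where $\chi'\otimes_{\scrO_X,\varphi}1$ is the base change of $\chi$, precisely the content of the equality $\chi'\otimes 1=\chi\otimes 1$ in Lemma~\ref{lem:char-descent}; raising to the $p^d$ power and pulling back by $F_{X/S}$ matches the common target $\bfB_{X/S,p^d r;\Fr_X^\ast K}$. The two right-hand squares commute by functoriality of $\Fr_X^\ast$ and of the $p^d$-power map on sections of symmetric powers.

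I expect the only real subtlety — the "main obstacle" such as it is — to be bookkeeping of the three symmetric-algebra base-change identifications: $\Sym^\bullet(\Fr_X^\ast L)$ versus $\Fr_X^\ast\Sym^\bullet L$, $\Sym^\bullet_{\scrO_{X'}}(w^\ast K)$ versus $w^\ast\Sym^\bullet_{\scrO_X}K$, and the compatibility $\varphi_{R/k}\circ w = \varphi_R$ built into \eqref{eq:notation-Frob}; one must keep straight over which ring each characteristic polynomial lives and confirm that the two natural maps labelled $F_{X/S}^\ast$ in the diagram really are the two sides of the same adjunction. Once one fixes local coordinates these identifications are canonical and the whole diagram is a tautology, so I would present the argument by reducing to the affine free case, invoking Lemma~\ref{lem:char-descent} for the key equality $\chi'\otimes 1=\chi\otimes 1$, and then noting that all four faces are now instances of "$\chi\mapsto\chi^{p^d}$, then pull back by Frobenius".
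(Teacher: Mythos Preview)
Your overall strategy---reduce to affine opens where $L$ and $E$ are free, invoke Lemma~\ref{lem:char-descent} to produce the descent $\chi''$ defining $c_{\dR}$, then verify the parallelograms---is exactly the paper's approach. But your treatment of the upper-left parallelogram has a real gap.

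You claim the characteristic polynomial $\chi'$ of the $(w^\ast K)$-Higgs field on $F_{X/S,\ast}E$ equals $\chi^{p^d}$ because ``$F_{X/S,\ast}E$ is, locally, $p^d$ copies of the underlying module of the $(w^\ast K)$-twisted endomorphism appearing in Lemma~\ref{lem:char-descent}''. This does not parse: the twisted endomorphism in Lemma~\ref{lem:char-descent} lives over $X$ (on the rank-$r$ module $E$, valued in $\Fr_X^\ast K$), whereas $\chi'$ is computed over $X'$ (on the rank-$p^d r$ module $F_{X/S,\ast}E$, valued in $w^\ast K$). There is no rank-$r$ object over $X'$ with a $(w^\ast K)$-twisted endomorphism of which $F_{X/S,\ast}E$ is $p^d$ copies. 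What \emph{is} true is that $\psi'$ is $\scrO_X$-linear (not merely $\scrO_{X'}$-linear), and the $\scrO_{X'}$-characteristic polynomial of an $\scrO_X$-linear endomorphism, computed after restriction of scalars along $F_{X/S}$, is the \emph{norm} $\Nm_{F_{X/S}}$ of the $\scrO_X$-characteristic polynomial (Bourbaki, III, \S9, \frno4, Prop.~6). Proposition~\ref{lem:norm-of-Frobenius} says that $F_{X/S}^\natural\circ\Nm_{F_{X/S}}$ is the $p^d$-th power map, which yields only $F_{X/S}^\ast\chi' = \chi^{p^d}$ over $X$, i.e.\ commutativity of the \emph{outer hexagon}. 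One then combines this with $F_{X/S}^\ast\chi'' = \chi$ (your lower parallelogram) and the injectivity of $F_{X/S}^\ast$ on sections (faithful flatness) to deduce $\chi' = (\chi'')^{p^d}$. Your argument skips the norm step entirely and asserts the conclusion directly over $X'$.

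A smaller point: when you say the local descents ``glue to a section over $X'$'', you must check that the $\gamma'_U$ produced by Lemma~\ref{lem:char-descent} on each open agree on overlaps. The lemma gives them individually; their compatibility on $U_1'\cap U_2'$ follows only after observing that $F_{X/S}^\ast\gamma'_{U_1}$ and $F_{X/S}^\ast\gamma'_{U_2}$ both restrict to the same $c_i$, and then invoking injectivity of $w^\ast K \to F_{X/S,\ast}F_{X/S}^\ast(w^\ast K)$ (faithful flatness of $F_{X/S}$). The paper spells this out; you should too, since it is the same injectivity used again for the upper parallelogram.
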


\begin{proof}
    \begin{enumerate}[wide]
        \item (The parallelogram on the right commutes) This is obvious.
        \item (Functoriallity) The following arguments works for all $T$-points.
            In particular, we remark that faithful flatness is preserved under
            arbitrary base change. Hence $X_T/T$ is faithfully flat for all
            $T/S$.
        \item (The outer hexagon commutes)
            Given a rank $r$ locally free sheaf $E$ with an $L$-module structure
            on $X/S$, we know its $p$-curvature $\psi$ is a $(\Fr_X^\ast
            K)$-Higgs field over $E$, which is of rank $r$ (resp.\ $\psi'$ is a
            $(w^\ast K)$-Higgs field on $F_{X/S, \ast}E$, which has rank $p^d
            r$). Denote by $\chi$ and $\chi'$ the characteristic polynomials of
            $\psi$ and $\psi'$ respectively. Then we have
            \begin{equation}
                F^\ast_{X/S}\chi' = \chi^{p^d}
                \in \bfB_{X/S, p^d r; \Fr_X^\ast K}(S),
                \label{eq:descent-1}
            \end{equation}
            i.e., the outer hexagon of the diagram commutes. In fact, the
            characteristic polynomial $\chi$ is that of the $(F_{X/S}^\ast
            (w^\ast K))$-module $E \otimes F_{X/S}^\ast \Sym^\bullet w^\ast K$.
            Meanwhile, the characteristic polynomial $\chi'$ is that of the
            $(\Sym (w^\ast K))$-module $F_{X/S, \ast}E \otimes \Sym^\bullet
            (w^\ast K) \simeq F_{X/S,\ast}(E \otimes F^\ast_{X/S}\Sym^\bullet
            (w^\ast K) )$ (using projection formula for each degree then taking
            direct sum), via the natural map
            \[
                \Sym^\bullet (w^\ast ) \longrightarrow
                F_{X/S, \ast} F^\ast_{X/S} \Sym^\bullet (w^\ast K).
            \]
            Then the equality follows from
            Proposition~\ref{lem:norm-of-Frobenius} and \cite[Chaptitre~III,
            \S9, \frno4, Propostion~6]{zbMATH05069335} and the fact that $\Nm$
            respects arbitrary base change
            (\cite[\SPtag{0BD2}]{stacks-project}).

        \item \label{item:cdr-exists}
           (The dotted arrow exists, making the lower left parallelogram
           commute) The fact that $\chi$ is a pullback of some $\chi'$ by
           $\Fr_{X/S}^\ast$ follows from Lemma~\ref{lem:char-descent} by gluing.

            More precisely, by the very definition, the characteristic
            polynomial $ \chi(t) = t^r - c_1 t^{r-1} + \cdots + (-1)^n c_n$ of $
            \psi : E \to E \otimes \Fr_X^\ast K$ has coefficients
            \[
                c_i \in \rmH^0\left(X, \Sym^i \Fr_X^\ast K\right) \simeq
                \rmH^0\left(X, \Fr_X^\ast \Sym^i K\right).
            \]
            By definition/construction, $c_i$ respects restriction. That is to
            say, if the characteristic polynomial of $E \to E \otimes \Fr_X^\ast
            K$ has coefficients $c_i \in \rmH^0(X, \Sym^i \Fr_X^\ast K)$, then
            for any open $U \subseteq X$, the characteristic polynomial of $E|_U
            \to E|_U \otimes_{\scrO_U} (\Fr_X^\ast K)|_U$ has coefficients the
            image of $c_i$ under the restriction map $\rmH^0(X, \Sym^i
            \Fr_X^\ast K) \to \rmH^0(U, \Sym^i \Fr_X^\ast K)$.

            Take an affine open cover $\frakV$ of $S$ then take an affine open
            cover $\frakU$ of $X$ such that each affine open $U$ in $\frakU$ is
            mapped into some affine open $V$ in $\frakV$, and that over $U$, $K$
            and $E$ are both trivialised. Set $\frakU':= w^\ast \frakU$, and
            $U'= w\inv (U)$ for each $U \in \frakU$. Then $\frakU'$ is an affine
            open cover for $X'$ and over each $U'$, $w^\ast K$ is trivialised.

            We know from Lemma~\ref{lem:char-descent} that over each $U$ in
            $\frakU$, $ c_i|_U = \Fr_X^\ast \gamma_U = F_{X/S}^\ast \gamma'_U$,
            for some $\gamma_U \in \Gamma(U, K)$ and $\gamma_U' = w^\ast
            \gamma_U \in \Gamma(U', w^\ast K)$. Moreover, we can conclude from
            $(c_i|_{U_1})|_{U_1 \cap U_2} = c_i|_{U_1 \cap U_2} = (c_i|_{U_2})
            |_{U_1 \cap U_2}$ that $(F_{X/S}^\ast \gamma_{U_1}')|_{U_1 \cap U_2}
            = F_{X/S}^\ast (\gamma'_{U_1}|_{U_1'\cap U_2'}) =  F_{X/S}^\ast
            (\gamma'|_{U_1 \cap U_2}) = F_{X/S}^\ast (\gamma_{U_2}'|_{U_1' \cap
            U_2'})$ for $U_1$ and $U_2$ in $\frakU$. Since $F_{X/S}$ is
            faithfully flat, we know that the pullback map $K \to F_{X/S, \ast}
            F^\ast_{X/S} K$ is \emph{injective} (\cite[{}2.2.8, 2.2.9]{EGAIV2}).
            Therefore, we further conclude that $\gamma_{U_1}'|_{U_1' \cap
            U_2'}\gamma_{U_2}'|_{U_1' \cap U_2'}$. Hence these $\gamma'_U$'s, $U
            \in \frakU$, glue to a section $\gamma_i \in \Gamma(X', w^\ast K)$,
            and $F_{X/S}^\ast \gamma'_i = c_i$. All these $\gamma_i$ defines a
            $\chi'' \in \bfB_{X/S, r; w^\ast K}(S)$, such that
            \begin{equation}
                F_{X/S}^\ast \chi '' = \chi \in \bfB_{X/S, r; w^\ast K}(S)
                \label{eq:descent-2}
            \end{equation}

        \item (The upper left parallelogram commutes)
            Combining \eqref{eq:descent-1} and \eqref{eq:descent-2}, and again
            the fact that $F_{X/S}^\ast : w^\ast K \to F_{X/S, \ast}
            F_{X/S}^\ast (w^\ast K)$ is injective, we obtain that
            \begin{equation}
                \chi' = (\chi'')^{p^d} \in \bfB_{X'/S,p^d r;w^\ast K}(S).
                \label{eq:reduced-char-polyn}
            \end{equation}
    \end{enumerate}
    So we complete the proof.
\end{proof}

\begin{remark}
    \begin{enumerate}[wide]
        \item Suppose that $X$ is locally noetherian and regular, for example it
            is smooth over $\Spec k$ for some field $k$, then according to
            Kunz's theorem (\citeSP{0EC0}), $\Fr_X$ is faithfully flat.
            Therefore, in the above step~\ref{item:cdr-exists}, we can actually
            glue $\gamma_U$'s to get a global section in $\Gamma(X, K)$ such
            that $c_i$ is its pullback under $\Fr_X$. However, regularity is
            \emph{not} preserved by base change. So this procedure does not work
            functorially. Hence we \emph{cannot} produce a map of stacks
            $\catVect_{X/S, r; L} \to \bfB_{X/S, r ;K}$, but only for those $T$
            such that $X_T$ is regular, a map $\catVect_{X/S, r;L}(T) \to
            \bfB_{X/S, r; K}(T)$.
        \item The existence of $c_{\dR}$ is usually proved by Cartier descent
            (\cite[Thm.~5.1]{zbMATH03350023}), using the fact that the
            $p$-curvature is horizontal with respect to the canonical
            connection. Our proof here avoids using it. Instead the result just
            follows from an easy observation Lemma~\ref{lem:char-descent}.
    \end{enumerate}
\end{remark}

\subsection{The Spectral cover defined by a $K$-Higgs bundle}
\label{ssec:spectral-cover}
Continue with the same setting as in the previous subsection. Recall that there
is a global section $\lambda \in \Gamma(\bfV(K), \pi^\ast K)$, corresponding to
$\id \in \End_{X}(\bfV(K))$, under the
identification~\eqref{eq:section-of-vector-bundles}, which is usually called the
\emph{tautological section}. Then the determinant $\det(\lambda -
\pi^\ast\theta)=: \chi_\theta(\lambda)$ is a global section of the locally free
sheaf $\Sym^r_{\scrO_{ \bfV(K) }} (\pi^\ast K) = \pi^\ast \Sym^r_{\scrO_X} K$ of
rank $S(r, \rk(K))$ over $\bfV(K)$, which is obtained from the
characteristic polynomial $\chi_\theta(t)$ of $\theta$ by pulling back along
$\pi$ then substituting $t$ by the tautological section $\lambda$, as described
in the general setting in~\S\ref{ssec:characteristic-polynomial}. The vanishing
locus on $\bfV(K)$ of the section $\chi_\theta(\lambda): \scrO_{\bfV(K)} \to
\pi^\ast \Sym^r K$ is defined by the quasi-coherent sheaf of ideals
\begin{equation}
    I_{\theta} := I_{\chi_\theta} := \Image\left(
        \begin{tikzcd}[column sep = large]
            {\left(\pi^\ast(\Sym^r K )\right)}^\vee
            \ar[r, "{(\chi_\theta(\lambda))}^\vee"]
            & \scrO_{\bfV(K)}
        \end{tikzcd}
    \right)
    \subseteq \scrO_{\bfV(K)}.
    \label{eq:ideal-of-definition}
\end{equation}
Denote the corresponding closed embedding by
$\iota_{\theta}:=\iota_{\chi_\theta}: Z_{\chi_\theta} \to \bfV(K)$. Besides, the
morphism $\pi \circ \iota_{\theta}: Z_{\chi_\theta} \to X$, or just
$Z_{\chi_\theta}$, is called the \emph{spectral cover} of $X$ associated to $(E,
\theta)$. Proposition~\ref{prop:proper-of-universal-spectral-curve} justifies
its name.

Given any $T$-point $\chi$ of $\bfB_{X/S, r;K}$, So by pulling back this
polynomial to $\bfV(K)_T$ and substituting $t$ by the tautological section
$\lambda$, we can obtain a closed subscheme $Z_\chi$ in the same manner. Denote
the ideal sheaf of definition ob $Z_\chi$ by $I_\chi$.

In case $\bfB_{X/S, r; K}$ is representable by a scheme $B$, we then have an
\emph{universal} element $\chi_{\text{univ}} \in \bigoplus_{i=1}^r
\Gamma\left(X_B, \Sym^i K_B \right)$ corresponding to $\id_B$. So there is a
corresponding closed subscheme $Z:=Z_{\text{univ}}$ of $\bfV(K_B) = \bfV(K)_B$.
We will call $Z/X_B$ the \emph{universal spectral cover}. It is universal in the
sense that for any $\chi: T \to B$ over $S$, $Z_\chi$ is the pullback of
$Z_{\text{univ}}$ along $\chi$, which follows just from Yoneda.

\begin{example} \label{exa:proper-over-k-representable}
    In case $X/S = X/k$ is proper over a field $k$,
    $\Gamma(X, K)$ is a finite dimensional $k$-vector space,
    and any map $T \to \Spec k$ is flat. So flat base change implies that
    $\bfB_{X/S, r;K}$ is representable by the affine $k$-scheme
    \[
        B_{X/S,r;K}:=\Spec \Sym^\bullet
        \left(\bigoplus_{i=1}^r (\Gamma(X, \Sym^i K)\right)^\vee
    \]
    (see~\S\ref{ssec:affine-morphism} and cf.~\cite[20]{zbMATH00753832}).
\end{example}

\begin{example}\label{exa:computation}
    Here we give an explicit \emph{local equation} for $Z_\chi$. Simply write
    $\bfB$ for $\bfB_{X/S, r;K}$. To this aim, we may assume that $\chi \in
    \bfB(S)$ is an $S$-point of $\bfB$, by observing all the following arguments
    works functorially. Assume that $K$ is of rank $d$ (not necessarily equal to
    the relative dimension of $X/S$). Moreover, we may assume that $X = \Spec R$
    is an affine scheme and the sheaf $K$ is free; otherwise, replace $X$ by a
    small enough affine open subscheme $U= \Spec R$ over which $K|_U$ is
    trivialised $K \simeq \bigoplus_{i = 1}^d \scrO_{X} \cdot \omega_i$,
    where $\omega_i \in \Gamma(X, K)$, $1 \leq i \leq d$, form a basis for the
    free $R$-module $\Gamma(X, K)$. This fixed trivialisation induces
    isomorphisms
    \begin{align*}
        \pi^\ast K & \simeq \bigoplus_{i = 1}^d
        \scrO_{\bfV(K)} \cdot \pi^\ast \omega_i,\\
        \text{\quad and\quad}
        \pi_\ast \scrO_{\bfV(K)}
        & \simeq \Sym^\bullet K^\vee \simeq
        \scrO_{X}[\partial_1, \ldots, \partial_d],
    \end{align*}
    with $\partial_i \in {\Gamma(X, K)}^\vee = \Gamma(X, K^\vee) \subseteq
    \Gamma(\bfV(K), \scrO_{\bfV(K)})$ being the $R$-dual of $\omega_i$.
    Moreover, for any  $1 \leq m \leq r$,
    \begin{equation}
        \begin{aligned}
            \Sym^m K
            & \simeq \bigoplus_{ |\underline{i}| = S(m, d) }
            \scrO_{X} \cdot \underline{\omega}^{\underline{i}},
            \text{\quad and\quad}\\
            \pi^\ast\Sym^m K
            & \simeq \bigoplus_{ |\underline{i}| = S(m, d) }
            \scrO_{\bfV(K)} \cdot
            \pi^\ast\underline{\omega}^{\underline{i}},
            \label{eq:decomposition-of-Sym-Omega}
        \end{aligned}
    \end{equation}
    where the usual multi-index convention is used; that is, for any multi-index
    $\underline{i} = (i_1, i_2, \ldots, i_d)$, where $i_j \geq 0$ for all $1
    \leq j \leq d$, we set $|\underline{i}| := \sum_{j = 1}^d i_j$ and
    $\underline{\omega}^{\underline{i}} := \omega_1^{i_1} \cdot \omega_2^{i_2}
    \cdots \omega_d^{i_d} \in \Gamma(X, \Sym^r K)$. The tautological section in
    this case can be written as
    \begin{equation}
        \lambda =\sum_{j=1}^d \partial_j \cdot (\pi^\ast \omega_j) \in
        \Gamma(\bfV(K), \pi^\ast K).
        \label{eq:tautological-section}
    \end{equation}

    For any $1 \leq m \leq r$, and any $a_m \in \Gamma(X, \Sym^m K)$,
    we can write, according to \eqref{eq:decomposition-of-Sym-Omega},%
    \[
        a_m = a_{m1} \cdot \omega_1^m + a_{m2} \cdot \omega_2^m +
        \cdots + a_{md} \cdot \omega_d^m
        + (\text{other terms}),
    \]
    where $a_{mj} \in \Gamma(X, \scrO_{X}) = R$, for each $1\leq j \leq d$, and
    $\text{(other terms)}$ is a sum of terms of the form $a_{m\underline{i}}
    \cdot \underline{\omega}^{\underline{i}}$, with $a_{m\underline{i}}\in R$
    and with $|\underline{i}| \neq 1$. Similarly, the equation
    \eqref{eq:tautological-section} gives that
    \[
        \lambda^{r - m} = \partial_1^{r - m} \cdot \pi^\ast \omega_1^{r-m} +
        \partial_2^{r - m} \cdot \pi^\ast \omega_2^{r-m} + \cdots +
        \partial_d^{r - m} \cdot \pi^\ast \omega_d^{r-m} + (\text{other terms}).
    \]
    Therefore, any $\chi$ of the form
    $\lambda^r - a_1 \lambda^{r-1} + \cdots + (-1)^r a_r$ of
    $\pi^\ast \Sym^r K$ can be then written as
    \begin{equation}
        \begin{aligned}
            \chi & = \lambda^r - a_1\lambda^{r -1} + \cdots + (-1) a_r \\
            & = \phantom{+}
            (\partial_1^r - a_{11}\partial_1^{r-1} + \cdots + (-1)^r a_{r1})
                \cdot \pi^\ast \omega_1^r \\
            & \phantom{=} +
                (\partial_2^r - a_{12}\partial_1^{r-1} + \cdots + (-1)^r a_{r2})
                \cdot \pi^\ast \omega_2^r\\
            & \phantom{=} + \cdots \\
            & \phantom{=} +
                (\partial_d^r - a_{1d}\partial_1^{r-1} + \cdots + (-1)^r a_{rd})
                \cdot \pi^\ast \omega_d^r\\
            & \phantom{=} + (\text{other terms})\\
            & =: g_1 \cdot \pi^\ast \omega_1^r
                + g_2 \cdot \pi^\ast \omega_2^r
                + \cdots
                + g_d \cdot \pi^\ast \omega_d^r
                + (\text{other terms}). %
        \end{aligned}
        \label{eq:local-defining-equation}
    \end{equation}
    In the above equation, $\text{(other terms)}$ is a sum of terms of the form
    $g_{\underline{i}} \cdot \pi^\ast \underline{\omega}^{\underline{i}}$,
    where $g_{\underline{i}} \in R[\partial_1,\ldots,\partial_d]$
    are polynomials of degree $r$, and for each $1 \leq m \leq d$,
    we write $g_m$ instead of $g_{\underline{i}}$
    when $|\underline{i}| = 1$ and $i_m = 1$.

    So we can conclude that $Z_\chi$ is (Zariski locally over $\Spec R \subseteq
    X$defined by the ideal $(g_1, \ldots, g_d, \ldots, g_{\underline{i}},
    \ldots)$ of $R[\partial_1, \ldots, \partial_d]$, generated by $S(r, d)$
    polynomials; in other words, $Z_\chi$ is the spectrum of
    \begin{equation}
        \frac{R[\partial_1, \ldots, \partial_d]}{
            (g_1, \ldots, g_d, \ldots,
            g_{\underline{i}}, \ldots)
        }
        \label{eq:Z-chi}
    \end{equation}
    We remark that $g_m \in R[\partial_m]$ with $1 \leq m \leq d$ are
    polynomials in only one variable of degree $r$.
\end{example}

\begin{proposition} \label{prop:proper-of-universal-spectral-curve}
    For any $\chi\in \bfB_{X/S, r;K}(T)$ with \emph{non-empty} $Z_\chi$, the
    natural map $Z_\chi \to X_T$ is finite and locally of finite presentation,
    hence in particular is proper. Suppose that $X/S$ is proper, and that
    $\bfB_{X/S, r;K}$ is representable by a scheme $B$, then $Z/B$ is proper,
    where $Z/X_B$ is the universal spectral cover.
    Moreover, if $r = 1$ or $d = 1$, $Z_\chi$ is always non-empty and
    $Z_\chi/X_T$ is also flat so finite locally free. Hence if in addition $X/S$
    is flat, so is $Z/B$.
\end{proposition}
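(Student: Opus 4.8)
The plan is to reduce every assertion to the explicit local equations for $Z_\chi$ found in Example~\ref{exa:computation}, and to check each property (finiteness, local finite presentation, flatness) Zariski-locally on $X_T$, all of these being local on the target. The composite $Z_\chi \hookrightarrow \bfV(K_T) \xrightarrow{\pi} X_T$ is affine, since $\pi$ is a relative spectrum and $Z_\chi$ is a closed subscheme; so over an affine open $U = \Spec R \subseteq X_T$ on which $K_T$ is trivialised, Example~\ref{exa:computation} gives
\[
    Z_\chi \times_{X_T} U \;=\; \Spec \frac{R[\partial_1, \ldots, \partial_d]}{(g_1, \ldots, g_d, \ldots, g_{\underline{i}}, \ldots)},
\]
with the ideal generated by $S(r,d)$ polynomials of degree $r$, and --- crucially --- with $g_m \in R[\partial_m]$ \emph{monic of degree $r$} in the single variable $\partial_m$, for each $1 \le m \le d$.

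First I would handle finiteness and finite presentation. Since $g_m$ is monic of degree $r$ in $\partial_m$ alone, Euclidean division in each variable shows that $R[\partial_1, \ldots, \partial_d]/(g_1, \ldots, g_d)$ is a free $R$-module of rank $r^d$, with basis the monomials $\partial_1^{a_1} \cdots \partial_d^{a_d}$, $0 \le a_j \le r - 1$. The coordinate ring of $Z_\chi \times_{X_T} U$ is a quotient of this, hence finite as an $R$-module; and it is a finitely presented $R$-algebra, being $R[\partial_1, \ldots, \partial_d]$ modulo finitely many relations. Gluing over such $U$ gives that $Z_\chi \to X_T$ is finite and locally of finite presentation, hence proper. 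Applying this to the universal point $\chi_{\text{univ}} \in \bfB_{X/S, r; K}(B)$ shows $Z \to X_B$ is finite, and composing with the proper morphism $X_B \to B$ (a base change of $X/S$) yields that $Z/B$ is proper.

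Next the flat case. If $r = 1$ then $\Sym^1 K = K$, there are no ``other terms'', and $g_m = \partial_m - a_{1m}$, so $Z_\chi \times_{X_T} U \cong \Spec R$ and $Z_\chi \to X_T$ is an isomorphism. If $d = 1$ then $S(r,1) = 1$, the ideal is generated by the single monic polynomial $g_1$ of degree $r$, and $Z_\chi \times_{X_T} U = \Spec R[\partial_1]/(g_1)$ is $R$-free of rank $r$. In either case $Z_\chi \to X_T$ is finite locally free of positive rank, hence faithfully flat; in particular $Z_\chi$ is non-empty whenever $X_T$ is. Applying this to $\chi_{\text{univ}}$ makes $Z \to X_B$ finite locally free, and if $X/S$ is moreover flat then $X_B \to B$ is flat and the composite $Z \to B$ is flat (and proper, by the previous paragraph).

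The argument is essentially bookkeeping once Example~\ref{exa:computation} is in hand; I do not expect a real obstacle. The only points needing care are that $Z_\chi$ is globally defined as the vanishing locus of a section of $\pi^\ast \Sym^r K_T$ (so the affine-local descriptions genuinely patch and everything is stable under the base change $T \to S$), and that one must not confuse the $S(r,d)$ generators in the general case with the degenerate shape they take when $r = 1$ or $d = 1$ --- which is exactly where the flatness, absent in general, comes from.
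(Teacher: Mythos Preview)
Your proof is correct and follows essentially the same route as the paper: reduce to the explicit local presentation \eqref{eq:Z-chi} from Example~\ref{exa:computation}, use that each $g_m$ is monic of degree $r$ in the single variable $\partial_m$ to get finiteness (via the free rank-$r^d$ intermediate quotient), and then read off flatness directly in the degenerate cases $r=1$ or $d=1$. The paper's argument is terser but structurally identical; you have simply spelled out the Euclidean-division step and the case analysis that the paper leaves implicit.
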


\begin{proof}
    Finiteness is a local question. According to Example~\ref{exa:computation},
    for any affine open $U = \Spec R \subseteq X_T$ where $K_T$ is trivialised,
    $Z_\chi|_U$ is the spectrum of the $R$-algebra given in \eqref{eq:Z-chi}.
    One observe that for each $1 \leq m \leq d$, $g_m$ has a single variable
    $\partial_m$. Therefore, \eqref{eq:Z-chi} is a finite $R$-module, and
    $Z_\chi/X_T$ is finite. In case $X/S$ is proper and $\bfB_{X/S, r;K}$ is
    representable by a scheme $B$, we have that $Z \to X_B \to B$ is a
    composition of proper morphisms hence is also proper. Flatness also follows
    from the local description~\eqref{eq:Z-chi}.
\end{proof}

\begin{remark} \label{rmk:can-be-empty-non-flat}
    The scheme $Z_\chi$ defined by an arbitrary $\chi \in \bfB_{X/S,r;K}(T)$ can
    be empty. This can be seen more obviously from the local
    description~\eqref{eq:Z-chi} in Example~\ref{exa:computation}, because of
    the presence of the polynomials $g_{\underline{i}}(\partial_1, \ldots,
    \partial_d)$, $\lvert\underline{i}\rvert \neq 1$.
    For the same reason, unless $d = 1$ or $r = 1$, $Z_\chi/X_T$ is in general
    \emph{not} flat,

    More generally, in~\cite[Thm.~5.1 \& Conj.~5.2]{1905.04741}, it was proved
    that the Hitchin map \eqref{eq:c-dol} factors through a closed subscheme of
    the Hitchin base, and was conjectured that the resulting map is surjective.
    This phenomenon can already be seen from Example~\ref{exa:computation}.
\end{remark}

\subsection{A BNR corespondence for $K$-Higgs bundles}
\label{ssec:BNR-Higgs}

We have already known from the universal property of the universal enveloping
algebra that the category of $K$-Higgs bundles of rank $r$ on $X/S$ is
equivalent to the category of quasi-coherent $\scrO_{\bfV(K)}$-modules whose
direct image on $X$ is locally free of rank $r$. By taking the spectral cover
into consideration, we can get a finer result (cf.~\cite{MR998478}).

\begin{proposition}[{\cite[Theorem~3.2]{zbMATH06666980}}]
    \label{prop:Higgs-spectral-cover}
    Given any $T$-point $\chi$ of $\bfB_{X/S, r; K}$
    with $\iota: Z_{\chi} \into \bfV(K)$, there is an equivalence of
    categories between
    \begin{enumerate}
        \item[(HA)] the fully faithful subcategory $c\inv_{\Higgs} (\chi)$ of
            $\catHiggs_{X/S, r;K}$, and
        \item[(HB)] the fully faithful subcategory of $\catQCoh(Z_{\chi})$
            consisting of objects $M$ such that
            \begin{itemize}
                \item $(\pi_T\circ\iota)_\ast M$ is locally free rank~$r$ on
                    $X_T$, and
                \item the induced Higgs field has characteristic polynomial
                    $\chi$.
            \end{itemize}
    \end{enumerate}
\end{proposition}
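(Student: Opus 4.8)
The plan is to pass through the total space $\bfV(K)_T$: first identify both sides with suitable full subcategories of $\catQCoh(\bfV(K)_T)$, and then show by Cayley--Hamilton that the relevant objects live scheme-theoretically on $Z_\chi$. Since $\pi_T\colon \bfV(K)_T \to X_T$ is affine, Proposition~\ref{prop:affine-morphisms}.\ref{prop:affine-morphisms-1} applied to it gives an equivalence $\pi_{T,\ast}$ between $\catQCoh(\bfV(K)_T)$ and the category of quasi-coherent $\scrO_{X_T}$-modules equipped with a $K_T$-Higgs field. Under this equivalence (which is the remark recalled just above the statement), (HA) $= c\inv_{\Higgs}(\chi)$ corresponds to the full subcategory $\mathcal{C}$ of those $N$ such that $\pi_{T,\ast}N$ is locally free of rank $r$ on $X_T$ and the induced Higgs field has characteristic polynomial $\chi$. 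The task is then to show that every $N \in \mathcal{C}$ is annihilated by the ideal $I_\chi$ of \eqref{eq:ideal-of-definition}, and afterwards to match $\mathcal{C}$ with (HB).

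The heart of the proof is this \emph{support statement}. I would write $E := \pi_{T,\ast}N$ and let $\theta\colon E \to E\otimes K_T$ be the associated Higgs field. Unwinding the definition of the tautological section $\lambda \in \Gamma(\bfV(K)_T, \pi_T^\ast K_T)$ via \eqref{eq:section-of-vector-bundles}, together with the projection formula, one sees that $\theta$ is exactly the pushforward along $\pi_T$ of the map $N \to N\otimes_{\scrO_{\bfV(K)_T}}\pi_T^\ast K_T$, $n \mapsto n\otimes\lambda$; iterating, $\theta^i\colon E\to E\otimes\Sym^i K_T$ is the pushforward of $n \mapsto n\otimes\lambda^i$. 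Consequently the map $\chi_\theta(\theta)\colon E\to E\otimes\Sym^r K_T$ of \S\ref{ssec:characteristic-polynomial} is the pushforward along $\pi_T$ of multiplication by the section $\chi_\theta(\lambda)\in\Gamma(\bfV(K)_T,\pi_T^\ast\Sym^r K_T)$ of \S\ref{ssec:spectral-cover}. By the Cayley--Hamilton theorem (Proposition~\ref{prop:C-H}) we have $\chi_\theta(\theta) = 0$; since $\pi_T$ is affine the functor $\pi_{T,\ast}$ is faithful, so already the map $N \to N\otimes\pi_T^\ast\Sym^r K_T$, $n\mapsto n\otimes\chi_\theta(\lambda)$, vanishes. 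Because $\pi_T^\ast\Sym^r K_T$ is locally free, this vanishing is equivalent to $I_\chi\cdot N = 0$, as $I_\chi$ is by definition \eqref{eq:ideal-of-definition} the image of the dual map $(\chi_\theta(\lambda))^\vee$. Hence the $\scrO_{\bfV(K)_T}$-module structure on $N$ factors through $\scrO_{Z_\chi} = \scrO_{\bfV(K)_T}/I_\chi$. In the coordinates of Example~\ref{exa:computation} this is the assertion that the action kills the generators of the ideal in \eqref{eq:Z-chi}, which one could also check by hand.

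With the support statement established, the rest is formal. The closed immersion $\iota\colon Z_\chi \into \bfV(K)_T$ is affine with $\iota_\ast\scrO_{Z_\chi} = \scrO_{\bfV(K)_T}/I_\chi$, so Proposition~\ref{prop:affine-morphisms}.\ref{prop:affine-morphisms-1} again exhibits $\iota_\ast$ as a fully faithful embedding of $\catQCoh(Z_\chi)$ onto the full subcategory of $\catQCoh(\bfV(K)_T)$ of modules killed by $I_\chi$. Composing with the first equivalence identifies $\mathcal{C}$ with the category of $M\in\catQCoh(Z_\chi)$ for which $(\pi_T\circ\iota)_\ast M = \pi_{T,\ast}\iota_\ast M$ is locally free of rank $r$ on $X_T$ and the Higgs field it inherits (via the natural surjection $\Sym^\bullet K_T^\vee\to(\pi_T\circ\iota)_\ast\scrO_{Z_\chi}$) has characteristic polynomial $\chi$, namely (HB). Finally, all of these constructions are stable under base change in $T$, so if one wishes the argument upgrades verbatim to an equivalence of the corresponding stacks. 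I expect the only genuinely non-routine ingredient to be the Cayley--Hamilton/support step in the middle paragraph; everything else is an application of the two ambient affine equivalences.
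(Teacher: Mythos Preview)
Your proposal is correct and follows essentially the same route as the paper: both use the affine equivalence for $\pi_T$ to translate the problem to $\bfV(K)_T$, invoke Cayley--Hamilton (Proposition~\ref{prop:C-H}) to conclude $I_\chi\cdot\widetilde{E}=0$, and then identify sheaves on $Z_\chi$ with those on $\bfV(K)_T$ annihilated by $I_\chi$. The paper phrases the key step in dual form, showing that $\pi_\ast(\id_{\widetilde{E}}\otimes(\chi(\lambda))^\vee)$ coincides with \eqref{eq:C-H} composed with the evaluation map, whereas you argue via faithfulness of $\pi_{T,\ast}$ that multiplication by $\chi_\theta(\lambda)$ already vanishes on $N$; these are the same computation.
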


\begin{proof}
    To simplify notations, we only work we $T$-points. The arguments below work
    functorially.

    Take an $S$-point $\chi$ of $\bfB_{X/S, r;K}$, i.e., a polynomial
    $\chi(t) = t^r - a_1 t^{r - 1} + \cdots + (-1)^{r}a_r \in \Gamma(X,
    \Sym^\bullet K [t])$ with $a_i \in \Gamma(X, \Sym^i K)$. Suppose that $(E,
    \theta)$ is a Higgs bundle of rank $r$ on $X$ such that the characteristic
    polynomial $\chi_\theta(t)$ of the Higgs field~$\theta$ equals to $\chi(t)$.
    Since $E$ is a Higgs bundle, it gives a quasi-coherent sheaf $\widetilde{E}$
    on $\bfV(K)$. Then the Cayley-Hamilton theorem
    (Proposition~\ref{eq:C-H}) implies that $\widetilde{E}$ is supported on the
    spectral cover $Z_{\chi}$ defined by $\chi$.
    To see this, we only need to verify that $I_\theta\cdot\widetilde{E} = 0$,
    where $I_\theta$ is the sheaf of ideals~\eqref{eq:ideal-of-definition} that
    defines $Z_{\chi}$. In other words, it suffices to show that the morphism
    \[
        \id_{\widetilde{E}} \otimes {(\chi(\lambda))}^\vee: \widetilde{E}
        \otimes_{\scrO_{\bfV(K)}} \pi^\ast (\Sym^r K)^\vee \longrightarrow
        \widetilde{E}
    \]
    is zero, where $\lambda$ is the tautological section of $\pi^\ast K$ and
    $\chi(\lambda) = \det (\lambda - \pi^\ast \theta): \scrO_{V(K)} \to \pi^\ast
    \Sym^r K$. By~\S\ref{ssec:affine-morphism} and projection formula, it
    suffices to show that
    \begin{equation}
        \pi_\ast(\id_{\widetilde{E}} \otimes {(\chi(\lambda)}^\vee): E \otimes
        (\Sym^r K)^\vee \longrightarrow E
        \label{eq:C-H-Higgs}
    \end{equation}
    is zero. One checks easily,
    for example by local computation, that \eqref{eq:C-H-Higgs} is exactly
    \eqref{eq:C-H} composed with
    the evaluation map $\Sym^r K \otimes (\Sym^r K)^\vee
    \to \scrO_X$, hence is zero. Therefore, the Higgs bundle $(E, \theta)$
    with characteristic polynomial $\chi$
    gives rise to a quasi-coherent sheaf on $Z_\chi\subseteq \bfV(K)$.

    Conversely, suppose that $M$ is a quasi-coherent module on $Z_{\chi}$
    defined by $\chi$, with direct image
    $E':=\pi_\ast\iota_\ast M$ to $X$ a locally free
    $\scrO_X$-module of rank $r$. Then $(E', \theta')$ is a Higgs bundle on $X$,
    with the Higgs field $\theta'$ given by the $\scrO_{\bfV(K)}$-module
    structure of $\iota_\ast M$.
\end{proof}

\begin{remark}
    However, it is not clear to us how the characteristic polynomial of
    $\theta'$ is related to $\chi$. Besides, $\tilde E$ is supported on
    $Z_\chi$, but its scheme-theoretic support can be ``thinner'' than $Z_\chi$.
    For example, if $\theta \equiv 0$, then the characteristic polynomial is
    $t^r$. In this case, $Z_\chi$ is ``defined by $t^n = 0$'', but $\tilde E$
    has scheme-theoretic support ``defined by $t = 0$''. In other words, the
    ``minimal polynomial'' may have lower degree than the characteristic
    polynomial.
\end{remark}

\subsection{A BNR correspondence for $L$-bundles}
\label{ssec:BNR-LocSys}

Work in the same settings as in Proposition~\ref{prop:reduced-char-polyn}. In
other words, let $X/S$ be smooth of relative dimension $d>0$. Let $L$ be a
restricted Lie algebroid and set $K:= L^\vee:= \cHom(L, \scrO_X)$. Then thanks
to Proposition~\ref{prop:reduced-char-polyn}, we have a similar result to
Proposition~\ref{prop:Higgs-spectral-cover}.

\begin{proposition}[Groechenig] \label{prop:local-system-spectral-cover}
    Given any $\chi \in \bfB_{X'/S, r;w^\ast K}(T)$, with $\iota: Z_{\chi} \into
    \bfV(w^\ast K)$ the corresponding closed embedding defined by $\chi$, we
    have an equivalence of categories between
    \begin{enumerate}
        \item[(LA)] the fully faithful subcategory $c_{\dR}\inv (\chi)$ of
            $\catVect_{X/S, r; L}$, and
        \item[(LB)] the fully faithful subcategory of
            $\catQCoh(Z_{\chi}, \iota^\ast \scrU(L))$ consisting of objects
            $M$ such that
            \begin{itemize}
                \item the induced $\scrO_{X_T}$-module
                    $\widetilde{\pi_\ast\iota_\ast M}$ is locally free of
                    rank~$r$, and
                \item the induced Higgs filed on $\pi_\ast \iota_\ast M$ has
                    characteristic polynomial $\chi^{p^d}$.
            \end{itemize}
    \end{enumerate}
\end{proposition}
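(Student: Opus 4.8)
I will run the argument of Proposition~\ref{prop:Higgs-spectral-cover} in parallel, the only point needing work beyond what that proof already supplies being a sharpening of Cayley--Hamilton: for a rank-$r$ $L$-bundle $E$ with $c_{\dR}(E)=\chi$, the associated $\scrU(L)$-module is supported on $Z_\chi$ itself, and not merely on the thickening $Z_{\chi^{p^d}}$ that naive Cayley--Hamilton applied to the rank-$p^d r$ $p$-curvature Higgs field would give. As in that proof I argue with $T$-points, everything being functorial since the relative Frobenius, the formation of $\bfV(-)$ and $\Sym^\bullet$, and the spectral covers all commute with base change.

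The plan is first to assemble a chain of equivalences. By Corollary~\ref{cor:universal-property-of-U(L)} an $L$-module on $X/S$ is the same as a left $U(L)$-module; since $F:=F_{X/S}$ is finite locally free, hence affine, Proposition~\ref{prop:affine-morphisms} identifies these with $\scrO_{X'}$-quasi-coherent left $F_\ast U(L)$-modules on $X'$; and since $\pi\colon\bfV(w^\ast K)\to X'$ is affine and $\pi_\ast\scrU(L)=F_\ast U(L)$ by the definition of $\scrU(L)$ in \S\ref{ssec:p-curv}, a further application of Proposition~\ref{prop:affine-morphisms} identifies these with $\scrO_{\bfV(w^\ast K)}$-quasi-coherent left $\scrU(L)$-modules on $\bfV(w^\ast K)$. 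Under this composite an $L$-bundle $E$ goes to $\widetilde{F_\ast E}$ with its $\scrU(L)$-action, whose underlying $\scrO_{\bfV(w^\ast K)}$-module is exactly the spectral sheaf of the induced $w^\ast K$-Higgs field $\psi'$ on $F_\ast E$, and $M\mapsto\widetilde{\pi_\ast\iota_\ast M}$ is the inverse passage. Moreover $I_\chi\subseteq\scrO_{\bfV(w^\ast K)}$ is central in $\scrU(L)$ --- the whole of $\scrO_{\bfV(w^\ast K)}$ maps into the centre of $\scrU(L)$, which is the content of \eqref{eq:p-curv} and Corollary~\ref{cor:p-linear-universal} --- so $\scrU(L)/I_\chi\scrU(L)=\iota^\ast\scrU(L)$ is a sheaf of algebras on $Z_\chi$, and $\iota_\ast$ identifies $\catQCoh(Z_\chi,\iota^\ast\scrU(L))$ with the full subcategory of those $\scrU(L)$-modules which $I_\chi$ annihilates. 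Composing, $\catQCoh(Z_\chi,\iota^\ast\scrU(L))$ becomes equivalent to the category of $L$-modules $E$ for which $I_\chi$ annihilates $\widetilde{F_\ast E}$.

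The heart of the matter is that a rank-$r$ $L$-bundle $E$ satisfies this last condition precisely when $c_{\dR}(E)=\chi$; this is the step I expect to be the main obstacle. For the forward implication, Proposition~\ref{prop:reduced-char-polyn} gives that the $p$-curvature $\psi\colon E\to E\otimes\Fr_X^\ast K$ of $E$ has characteristic polynomial $F^\ast\chi$, so Cayley--Hamilton (Proposition~\ref{prop:C-H}) makes the map $(F^\ast\chi)(\psi)\colon E\to E\otimes\Sym^r\Fr_X^\ast K$ vanish. Applying the (fully faithful) functor $F_\ast$, the projection formula $F_\ast(E\otimes F^\ast(-))=F_\ast E\otimes(-)$, and the identification $\Fr_X^\ast K=F^\ast w^\ast K$, this turns into $\chi(\psi')=0$, where $\psi'$ is the induced $w^\ast K$-Higgs field on the rank-$p^d r$ bundle $F_\ast E$; equivalently, $I_\chi$ annihilates $\widetilde{F_\ast E}$. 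This is the promised sharpening: Cayley--Hamilton for $\psi'$ in isolation only yields that the bigger ideal $I_{\chi^{p^d}}=I_\chi^{p^d}$ annihilates $\widetilde{F_\ast E}$. Hence $\widetilde{F_\ast E}=\iota_\ast M$ for an $\iota^\ast\scrU(L)$-module $M$ on $Z_\chi$ with $\widetilde{\pi_\ast\iota_\ast M}=E$ locally free of rank $r$, and the induced Higgs field $\psi'$ on $\pi_\ast\iota_\ast M=F_\ast E$ has characteristic polynomial $(c_{\dR}(E))^{p^d}=\chi^{p^d}$, again by Proposition~\ref{prop:reduced-char-polyn}; so the functor lands in (LB). Conversely, given $M$ in (LB), set $E:=\widetilde{\pi_\ast\iota_\ast M}$: it is a rank-$r$ $L$-bundle by hypothesis, the induced Higgs field $\psi'$ on $F_\ast E$ has characteristic polynomial $\chi^{p^d}$ by hypothesis, and this equals $(c_{\dR}(E))^{p^d}$ by Proposition~\ref{prop:reduced-char-polyn}, whence $c_{\dR}(E)=\chi$ once one knows that $(-)^{p^d}$ is injective on the Hitchin base --- which reduces to injectivity of $a\mapsto a^{p^d}$ on $\Gamma(X'_T,\Sym^\bullet w^\ast K_T)$ and holds whenever $\Gamma(X'_T,\scrO)$ is reduced, in particular in the situation of the main theorem. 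So $E$ lies in (LA).

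Full faithfulness on each side is then automatic, every functor in the chain being an equivalence onto its essential image and (LA), (LB) being the full subcategories cut out by the matching conditions. The one subtlety worth stressing --- apart from the sharpening already flagged as the main obstacle --- is that the scheme-theoretic support of $\widetilde{F_\ast E}$ can be strictly smaller than $Z_\chi$ (cf.\ the remark following Proposition~\ref{prop:Higgs-spectral-cover}), so it is genuinely necessary that (LB) impose the characteristic polynomial $\chi^{p^d}$ rather than mere support on $Z_\chi$; together with the Frobenius descent of the $p$-curvature's characteristic polynomial (Proposition~\ref{prop:reduced-char-polyn}, resting on Lemma~\ref{lem:char-descent}), this is what pins the correspondence down to $Z_\chi$ exactly.
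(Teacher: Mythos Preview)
Your overall strategy coincides with the paper's --- pass from $L$-modules to $\scrU(L)$-modules on $\bfV(w^\ast K)$ via the $p$-curvature, then cut down to $Z_\chi$ --- but your forward direction is handled differently, and in fact more carefully. The paper applies Cayley--Hamilton to the rank-$p^dr$ Higgs field $\psi'$ on $F_\ast E$, obtaining $I_{\chi^{p^d}}\cdot\widetilde{F_\ast E}=0$, and then writes ``$I_{\chi^{p^d}}\subseteq I_\chi^{p^d}\subseteq I_\chi$, hence $I_\chi\cdot\widetilde{F_\ast E}=0$'' --- which is the wrong direction of implication (a smaller ideal annihilating does not force a larger one to). Your route via Cayley--Hamilton on the rank-$r$ map $\psi$ followed by $F_\ast$ and the projection formula is exactly what repairs this: it yields $\chi(\psi')=0$ directly, hence $I_\chi\cdot\widetilde{F_\ast E}=0$. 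Two small expository slips in your write-up: $I_{\chi^{p^d}}$ is the \emph{smaller} ideal (its vanishing locus $Z_{\chi^{p^d}}$ is the thicker scheme), and in general one only has $I_{\chi^{p^d}}\subseteq(I_\chi)^{p^d}$, not equality.

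On the converse you correctly isolate the step $c_{\dR}(E)^{p^d}=\chi^{p^d}\Rightarrow c_{\dR}(E)=\chi$, which the paper passes over in silence. Your resolution via reducedness of $\Gamma(X'_T,\scrO)$ is, however, not sufficient for the proposition as a statement about $T$-points of stacks: the main theorem is asserted over all of $B'_r$, so $T$ ranges over arbitrary (in particular non-reduced) $k$-schemes, and $a\mapsto a^{p^d}$ fails to be injective on e.g.\ $\Gamma(X'_T,\scrO)$ for $T=\Spec k[\epsilon]/(\epsilon^2)$. This is a gap shared with the paper, and you are right to flag it; one way to close it cleanly is to rephrase the second bullet of (LB) in terms of the characteristic polynomial of $\psi$ on $\widetilde{\pi_\ast\iota_\ast M}$ being $F_{X/S}^\ast\chi$, since then the conclusion $c_{\dR}(E)=\chi$ follows from injectivity of $F_{X/S}^\ast$ (faithful flatness) rather than of the $p^d$-th power map.
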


\begin{proof}
    Let $E$ be an $L$-module with $c_{\dR}(E) = \chi$, i.e., an object in (LA).
    Since $E$ is an $U(L)$-module, recall \S\ref{ssec:p-curv} that $F_{X/S,
    \ast}E$ is then an $F_{X/S,\ast}U(L)$-module and it restricts to a
    Higgs-bundle via \eqref{eq:p-curv}, and moreover, by
    Propositoin~\ref{prop:reduced-char-polyn}, the characteristic polynomial of
    this Higgs field is $\chi^{p^d}$. Consider the quasi-coherent
    $\scrO_{\bfV(w^\ast K)}$-module $\widetilde{F_{X/S, \ast}E}$ as in the proof
    of Proposition~\ref{prop:Higgs-spectral-cover}, which is then a $\scrU(L):=
    \widetilde{F_{X/S, \ast}U(L)}$-module too. For this module, we have
    $I_{\chi^{p^d}} \cdot \widetilde{F_{X/S, \ast}E}$ by Cayley-Hamilton. Note
    that $I_{\chi^{p^d}} \subseteq  I_{\chi}^{p^d} \subseteq I_{\chi}$. Hence
    $I_\chi\cdot\widetilde{F_{X/S, \ast}E} = 0$. That is to say,
    $\widetilde{F_{X/S, \ast}E}$ is an object in $\catQCoh(Z_\chi, \iota^\ast
    \scrU(L))$, satisfying properties listed in (LB).

    Conversely, suppose we have a $\iota^\ast \scrU(L)$-module $M$
    on~$Z_{\chi}$ as in (LB). Then $\pi_\ast \iota_{\ast} M$ is an
    $F_{X/S, \ast} U(L)$-module (via the natural map $F_{X/S,\ast} U(L) =
    \pi_\ast \scrU(L) \to \pi_\ast \iota_\ast \iota^\ast \scrU(L)$).
    Therefore, by considering the affine map $F_{X/S}$, we know
    $\widetilde{\pi_\ast \iota_\ast M}$ is a quasi-coherent $\scrO_X$-module as
    well as an $U(L)$-module, because $\widetilde{(F_{X/S})_\ast U(L)} = U(L)$.
    So we get an object in (LA).

    Of course the above arguments work functorially for any $T/S$. It is not
    hard to check the correspondence given above are mutually quasi-inverse to
    each other. So we complete the proof.
\end{proof}

\begin{remark}
    In \cite[Prop.~3.15]{zbMATH06666980} and~\cite[Thm.~2.4]{1707.00752}, they
    use the Morita equivalence and the
    equivalence~\ref{prop:Higgs-spectral-cover} to show the existence of
    $\chi''$ satisfying~\eqref{eq:reduced-char-polyn} and then established the
    equivalence Proposition~\ref{prop:local-system-spectral-cover}.
\end{remark}

\subsection{Azumaya property of the sheaf of crystalline differential operators}

For a general (restricted) Lie algebroid, we could not say much about their
general properties (see \cite{zbMATH06358801} for some of them). However, we
know that in case $L = \leftidx{^1}\cDer_{X/S}$, its universal enveloping
algebra $D_{X/S}$ is \'etale locally modelled by the Weyl algebra. So according
to Theorem~\ref{thm:azumaya-local}, we obtain the following key results, which
can be found in \cite[Lemma~1.3.2, Lemma~2.2.1, Proposition~2.2.2,
Theorem~2.2.3, Remark~2.1.2 and \S2.2.5]{zbMATH05578707}, with $S = \Spec k$ for
an algebraically closed field $k$.

\begin{theorem}\label{thm:azumaya-property}
    Assume that $X/S$ is smooth of relative dimension $d > 0$ and that $S$ is of
    characteristic $p > 0$. Then we have the following facts.
    \begin{enumerate}[wide]
        \item The morphism \eqref{eq:p-curv}
            \begin{equation}
                \psi:\Sym^\bullet \Theta_{X'/S} \longrightarrow
                Z(F_{X/S,\ast} D_{X/S}) \subseteq F_{X/S, \ast} D_{X/S},
                \label{eq:p-curvature-D}
            \end{equation}
            is an isomorphism of $\scrO_{X'}$-algebras. This isomorphism defines
            (according to~\S\ref{ssec:affine-morphism}) a sheaf $\scrD_{X/S}:=
            \scrU(\leftidx{^1}\cDer_{X/S})$ of $\scrO_{\cotsp{X'/S}}$-algebra
            on the cotangent bundle of $X'/S$.
        \item The morphism \eqref{eq:p-curvature-restriction}
            \begin{equation}
                \psi': F_{X/S}^\ast \Sym^\bullet \Theta_{X'/S}
                \longrightarrow Z_{D_{X/S}}(\scrO_X) \subseteq D_{X/S}.
                \label{eq:p-curvature-D'}
            \end{equation}
            is an isomorphism of $\scrO_X$-algebras,
            where $Z_{D_{X/S}}(\scrO_X)$ is the centraliser of $\scrO_X$ in
            $D_{X/S}$.
        \item Moreover, $\scrD_{X/S}$ is an Azumaya algebra of rank $p^{2d}$ over
            the cotangent bundle.
            Actually, there is an isomorphism of
            $(F_{X/S}^\ast \Sym^\bullet \Theta_{X'/S})$-algebras
            \begin{equation}
                F_{X/S}^\ast \left(F_{X/S,\ast} D_{X/S}\right) \simeq
                \cEnd_{F_{X/S}^\ast\left(\Sym^\bullet \Theta_{X'/S}\right)}
                (D_{X/S}),
            \end{equation}
            which defines an splitting of the pullback of
            $\scrD_{X/S}$ to $\cotsp{X'/S} \times_{X', F_{X/S}} X$. This Azumaya
            algebra is nontrivial if the relative dimension $\dim(X/S)$ is more
            than~$0$.

        \item \label{lem:is-invertible}
            Let $\iota: Z \to \cotsp{X/S}$ be any morphism.
            Suppose that we have a splitting
            \[
                \iota^\ast \scrD_{X/S} \simeq \cEnd_{\scrO_{Z}}(P).
            \]
            Then $P$ is a direct image of a rank one locally free sheaf
            $\widetilde{P}$ on $Z \times_{X', F_{X/S}} X$.

        \item \label{lem:split-over-section}
            Let $\iota: X' \to \cotsp{X'/S}$ be a section of the projection
            $\pi: \cotsp{X'/S} \to X'$. Then we have an canonical splitting
            \[
                \iota^\ast \scrD_{X/S} \simeq \cEnd_{\scrO_{X'}} (F_{X/S, \ast}
                \scrO_X).
            \]
    \end{enumerate}
\end{theorem}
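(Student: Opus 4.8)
The plan is to deduce this from its local incarnation, Theorem~\ref{thm:azumaya-local}.\ref{thm:azumaya-local-8}--\ref{thm:azumaya-local-9}, by exhibiting a canonical comparison morphism of $\scrO_{X'}$-algebras and checking that it is an isomorphism.

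First I would unwind the data. Since $\cotsp{X'/S} = \cSpec_{X'}\Sym^\bullet\Theta_{X'/S}$, a section $\iota$ of $\pi$ amounts to an $\scrO_{X'}$-algebra homomorphism $s\colon\Sym^\bullet\Theta_{X'/S}\to\scrO_{X'}$; composing with the inverse of the $p$-curvature isomorphism \eqref{eq:p-curvature-D} of Theorem~\ref{thm:azumaya-property}, this is the same datum as a retraction $\rho_\iota\colon Z(F_{X/S,\ast}D_{X/S})\to\scrO_{X'}$ of the structure map of the central subsheaf, i.e.\ a way of regarding $\scrO_{X'}$ as an algebra over $Z(F_{X/S,\ast}D_{X/S})$. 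By the affine-morphism dictionary (Proposition~\ref{prop:affine-morphisms}.\ref{prop:affine-morphisms-1}) applied to $\pi$, the sheaf $\iota^\ast\scrD_{X/S}$ is thereby identified, as a sheaf of $\scrO_{X'}$-algebras, with the base change $F_{X/S,\ast}D_{X/S}\otimes_{Z(F_{X/S,\ast}D_{X/S}),\,\rho_\iota}\scrO_{X'}$.

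Next, the comparison map. The canonical flat connection on $\scrO_X$ (Example~\ref{exa:restricted-k-R-Lie}.\ref{exa:restricted-k-R-Lie-1}) makes $\scrO_X$ a $D_{X/S}$-module, hence $F_{X/S,\ast}\scrO_X$ a left $F_{X/S,\ast}D_{X/S}$-module; this action is by $\scrO_{X'}$-linear endomorphisms because the $\scrO_{X'}$-structure on $F_{X/S,\ast}\scrO_X$ is induced by $F_{X/S}^\natural$, whose local sections are $p$-th powers and are therefore annihilated by every derivation --- the global form of the first assertion of Theorem~\ref{thm:azumaya-local}.\ref{thm:azumaya-local-8}. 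The central subsheaf $Z(F_{X/S,\ast}D_{X/S})$ acts on $F_{X/S,\ast}\scrO_X$ precisely through $\rho_\iota$: locally this is the computation in the proof of Theorem~\ref{thm:azumaya-local}.\ref{thm:azumaya-local-8}--\ref{thm:azumaya-local-9} that a central element $\psi(\partial)$ acts by the constant prescribed by the value of $\iota$ (where necessary one replaces the canonical connection on $\scrO_X$ by the flat connection whose $p$-curvature is $\iota$, which has the same underlying $\scrO_X$-module). Hence the action descends to an $\scrO_{X'}$-algebra homomorphism
\[
    \Phi\colon \iota^\ast\scrD_{X/S}\longrightarrow \cEnd_{\scrO_{X'}}\bigl(F_{X/S,\ast}\scrO_X\bigr).
\]
Finally, $\Phi$ is an isomorphism. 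Both source and target are locally free $\scrO_{X'}$-algebras of rank $p^{2d}$: the target because $F_{X/S}$ is finite locally free of rank $p^d$ (Proposition~\ref{lem:norm-of-Frobenius}), the source because $\scrD_{X/S}$ is Azumaya of rank $p^{2d}$ over $\cotsp{X'/S}$ (Theorem~\ref{thm:azumaya-property}) and pullback along $\iota$ preserves the rank. It therefore suffices to check that $\Phi$ is an isomorphism fibrewise over $X'$; at a point $x'\in X'$ the fibre of $\iota^\ast\scrD_{X/S}$ is the central simple $\kappa(x')$-algebra $\scrD_{X/S}\otimes\kappa(\iota(x'))$, so $\Phi\otimes\kappa(x')$ is a nonzero algebra map out of a simple algebra, hence injective, hence bijective for dimension reasons. (Equivalently, on the \'etale charts of \eqref{eq:etale-coordinate} the algebra $D_{X/S}$ is pulled back from the Weyl algebra and $\Phi$ becomes the balanced-module isomorphism of Theorem~\ref{thm:azumaya-local}.\ref{thm:azumaya-local-9}; the globally defined $\Phi$ then patches these.) Since $F_{X/S,\ast}\scrO_X$ is visibly canonical, this gives the asserted canonical splitting. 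The step I expect to be the main obstacle is the middle one: verifying that the central subsheaf acts on $F_{X/S,\ast}\scrO_X$ exactly through the retraction $\rho_\iota$ cut out by $\iota$ --- equivalently, that the splitting module attached to $\iota$ is $F_{X/S,\ast}\scrO_X$ itself and not some twist $F_{X/S,\ast}\widetilde{P}$ as permitted by Theorem~\ref{thm:azumaya-property}.\ref{lem:is-invertible}; the rest is affine-morphism bookkeeping and a pointwise rank count.
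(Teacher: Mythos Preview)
Your approach is essentially the paper's: its entire proof of the theorem is the two-line reduction ``use the \'etale coordinates \eqref{eq:etale-coordinate} to reduce to $\bfA^d_S \to S$ with $S$ affine; then every statement is Theorem~\ref{thm:azumaya-local}.'' Your proposal for part~\ref{lem:split-over-section} spells this out more explicitly by constructing the comparison map $\Phi$ globally and then verifying it on charts via Theorem~\ref{thm:azumaya-local}.\ref{thm:azumaya-local-8}--\ref{thm:azumaya-local-9}, and you correctly flag the one genuine subtlety (that for a non-zero section the canonical connection on $\scrO_X$ has the wrong $p$-curvature), which the paper's terse proof leaves implicit.
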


\begin{proof}
    Actually this problem is local. Using the ``\'etale coordinates''
    \eqref{eq:etale-coordinate}, we can reduce the problem to the case $\bfA_S^d
    \to S$ with $S$ affine. Then all of these statements have been proven in
    Theorem~\ref{thm:azumaya-local}.
\end{proof}

\begin{remark}
    For a generalisation of this result to smooth algebraic stacks, see
    \cite[Appendix B]{zbMATH06782031}, and for a generalisation to higher level
    differential operators, see~\cite[Prop.~3.6 and Thm.~3.7]{zbMATH05690817}.
\end{remark}

According to Theorem~\ref{thm:azumaya-property}, we have an Azumaya algebra
$\scrD_{X/S}$ on $\cotsp{X'/S}$, so we have the associated $\bbG_m$-gerbe
$\bfS_{\scrD}:= \bfS_{\scrD_{X/S}}$ of splittings over $\cotsp{X'/S}$ as
recalled in~\S\ref{ssec:azumaya-morita}.

\subsection{A Morita equivalence}
\label{sec:splitting-principle}

By comparing Propositions \ref{prop:Higgs-spectral-cover} and
\ref{prop:local-system-spectral-cover}, we may want to further understand the
relation between $\catQCoh(Z_\chi)$ and $\catQCoh(Z_\chi, \iota^\ast \scrU(L))$.
In other words, to understand if $\scrO_{Z_\chi}$ and $\iota^\ast\scrU(L)$ are
Morita equivalent. In case of $L = \cDer_{X/S}$, by virtual of
Theorem~\ref{thm:azumaya-property}, we may conclude the following useful result.

\begin{theorem}[Splitting Principle, {\cite[Lem.~3.27]{zbMATH06666980}}]
    \label{thm:splitting-principle}
    Given $\chi \in \bfB_{X'/S, r}(T)$, denote by $\iota: Z_\chi \into
    \cotsp{X'_T/T}$ and $\pi: \cotsp{X'_T/T} \to X'_T$ as before. Suppose there
    is a splitting
    \[
        \iota^\ast \scrD_{X/S} \simeq \cEnd_{\scrO_{Z_{\chi}}} (P).
    \]
    Then we have an equivalence of categories between
    \begin{itemize}
        \item[(HA)] the category of Higgs bundles of rank $r$ over $X'_T/T$, and
        \item[(LA)] the category of local systems of rank $r$ over $X_T/T$.
    \end{itemize}
\end{theorem}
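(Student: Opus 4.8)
The plan is to adapt the argument of \cite[Lem.~3.27]{zbMATH06666980}: compose the two spectral-cover correspondences of Propositions~\ref{prop:Higgs-spectral-cover} and~\ref{prop:local-system-spectral-cover} with the Morita equivalence attached to the given splitting. I would interpret (HA) and (LA) as the fibres $c_{\Higgs}\inv(\chi)$ and $c_{\dR}\inv(\chi)$ of the two Hitchin-type maps over the fixed point $\chi$. Write $Z:=Z_\chi$, $\iota:Z\into\cotsp{X'_T/T}$ and $\pi:\cotsp{X'_T/T}\to X'_T$. By Theorem~\ref{thm:azumaya-property}, $\scrD_{X/S}=\scrU(\leftidx{^1}\cDer_{X/S})$ is an Azumaya algebra of rank $p^{2d}$ over $\cotsp{X'/S}$; since Azumaya algebras are stable under base change, $\iota^\ast\scrD_{X/S}$ is Azumaya of rank $p^{2d}$ over $Z$, so in the given splitting $\iota^\ast\scrD_{X/S}\simeq\cEnd_{\scrO_Z}(P)$ the sheaf $P$ is locally free of rank $p^d$. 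First I would record, using \S\ref{ssec:azumaya-morita}, that $N\mapsto P\otimes_{\scrO_Z}N$ is an equivalence $\catQCoh(Z)\simto\catQCoh(Z,\iota^\ast\scrD_{X/S})$, with quasi-inverse $M\mapsto\cHom_{\iota^\ast\scrD_{X/S}}(P,M)$.

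Next I would invoke the two spectral-cover equivalences over $Z$. Proposition~\ref{prop:Higgs-spectral-cover}, applied to $X'_T/T$ with $K=\Omega_{X'_T/T}^1$, identifies $c_{\Higgs}\inv(\chi)\subseteq\catHiggs_{X'/S,r}$ with the full subcategory $\mathcal H\subseteq\catQCoh(Z)$ of those $M$ with $(\pi\circ\iota)_\ast M$ locally free of rank $r$ on $X'_T$ and of characteristic polynomial $\chi$. Proposition~\ref{prop:local-system-spectral-cover}, applied to $L=\leftidx{^1}\cDer_{X/S}$ — so that $K=\Omega_{X/S}^1$, $w^\ast K=\Omega_{X'/S}^1$ and $\scrU(L)=\scrD_{X/S}$ — identifies $c_{\dR}\inv(\chi)\subseteq\catLoc_{X/S,r}=\catVect_{X/S,r;\leftidx{^1}\cDer_{X/S}}$ with the full subcategory $\mathcal L\subseteq\catQCoh(Z,\iota^\ast\scrD_{X/S})$ of those $M$ for which $\widetilde{(\pi\circ\iota)_\ast M}$ (formed along $F_{X_T/T}$) is locally free of rank $r$ on $X_T$ and the Higgs field induced on $(\pi\circ\iota)_\ast M$ has characteristic polynomial $\chi^{p^d}$. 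It then remains to check that the Morita equivalence restricts to an equivalence $\mathcal H\simto\mathcal L$.

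To do this I would unwind the description of $P$ from Theorem~\ref{thm:azumaya-property}(\ref{lem:is-invertible}): $P=(\mathrm{pr}_1)_\ast\widetilde P$ for an invertible sheaf $\widetilde P$ on $W:=Z\times_{X'_T,F_{X_T/T}}X_T$, where $\mathrm{pr}_1:W\to Z$ is the base change of the relative Frobenius, hence finite locally free of rank $p^d$ by Proposition~\ref{lem:norm-of-Frobenius}, and $\mathrm{pr}_2:W\to X_T$ is the second projection, so $F_{X_T/T}\circ\mathrm{pr}_2=\pi\circ\iota\circ\mathrm{pr}_1$. For $M\in\catQCoh(Z)$ the projection formula gives $P\otimes_{\scrO_Z}M=(\mathrm{pr}_1)_\ast(\widetilde P\otimes\mathrm{pr}_1^\ast M)$, whence $(\pi\circ\iota)_\ast(P\otimes_{\scrO_Z}M)=F_{X_T/T,\ast}(\mathrm{pr}_2)_\ast(\widetilde P\otimes\mathrm{pr}_1^\ast M)$ and $\widetilde{(\pi\circ\iota)_\ast(P\otimes_{\scrO_Z}M)}=(\mathrm{pr}_2)_\ast(\widetilde P\otimes\mathrm{pr}_1^\ast M)$. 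Flat base change along $F_{X_T/T}$, together with the facts that $\widetilde P$ is invertible and fibrewise trivial over $X_T$ (the fibres of $\mathrm{pr}_2$ being finite, since $Z/X'_T$ is finite), should then give that $(\pi\circ\iota)_\ast M$ is locally free of rank $r$ on $X'_T$ precisely when $\widetilde{(\pi\circ\iota)_\ast(P\otimes M)}$ is locally free of rank $r$ on $X_T$. For the characteristic polynomials I would use that, Zariski-locally on $Z$, $P$ is free of rank $p^d$, so the Higgs field induced on $(\pi\circ\iota)_\ast(P\otimes M)$ is locally a $p^d$-fold direct sum of the one on $(\pi\circ\iota)_\ast M$; their characteristic polynomials therefore differ by a $p^d$-th power, which is exactly the relation $\chi'=(\chi'')^{p^d}$ of \eqref{eq:reduced-char-polyn} and, as there, comes from the norm-of-Frobenius identity of Proposition~\ref{lem:norm-of-Frobenius}. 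Hence $M\in\mathcal H$ if and only if $P\otimes_{\scrO_Z}M\in\mathcal L$, and chaining the three equivalences yields the asserted equivalence between (HA) and (LA).

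The hard part will be exactly this last matching: carrying the two conditions that cut out $\mathcal H$ — local freeness of rank $r$ after $(\pi\circ\iota)_\ast$, and characteristic polynomial $\chi$ — through the Morita functor to the two conditions that cut out $\mathcal L$ — local freeness of rank $r$ after $\widetilde{(-)}$ along $F_{X/S}$, and characteristic polynomial $\chi^{p^d}$. The rank bookkeeping is delicate because $P\otimes_{\scrO_Z}M$ is well behaved only after descent along the relative Frobenius, and the $p^d$-th power in the characteristic polynomial has to be traced carefully to the norm formula; the remainder is a formal composition of equivalences already in hand.
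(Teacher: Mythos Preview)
Your approach is essentially the same as the paper's: reduce to matching (HB) with (LB) via the Morita equivalence $M\mapsto P\otimes_{\scrO_{Z_\chi}}M$, then use Theorem~\ref{thm:azumaya-property}(\ref{lem:is-invertible}) to write $P$ as the pushforward of an invertible sheaf $\widetilde P$ on $W:=Z_\chi\times_{X'_T}X_T$, and track the rank and characteristic-polynomial conditions through the Cartesian square.

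One point to sharpen: where you say ``Zariski-locally on $Z$, $P$ is free of rank $p^d$'' and ``$\widetilde P$ is \ldots\ fibrewise trivial over $X_T$'', the paper is more careful. The relevant fact is not local triviality on $Z$ or on fibres, but that $\widetilde P$ can be trivialised \emph{Zariski-locally on the base $X_T$} (and likewise $P$ Zariski-locally on $X'_T$); this is exactly Proposition~\ref{prop:affine-morphisms}(\ref{prop:affine-morphisms-3}), which needs the finiteness of $\mathrm{pr}_2$ (resp.\ $\pi\circ\iota$). With that in hand, $\widetilde{(\pi\circ\iota)_\ast(P\otimes M)}$ is, Zariski-locally on $X_T$, isomorphic to $F_{X_T/T}^\ast(\pi\circ\iota)_\ast M$, and $(\pi\circ\iota)_\ast(P\otimes M)$ is, Zariski-locally on $X'_T$, isomorphic to $\big((\pi\circ\iota)_\ast M\big)^{\oplus p^d}$; this is precisely what yields the rank match (via faithful flatness of $F_{X_T/T}$) and the $p^d$-th-power relation of characteristic polynomials. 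Your sketch already points at this subtlety in the final paragraph, but pinning it to Proposition~\ref{prop:affine-morphisms}(\ref{prop:affine-morphisms-3}) rather than ``fibrewise triviality'' makes the argument go through cleanly.
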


\begin{proof}
    It suffices to check this for $T$-points. Fix a $\chi \in \bfB_{X'/S,
    r}(S)$. According to Propositions~\ref{prop:Higgs-spectral-cover}
    and~\ref{prop:local-system-spectral-cover}, it suffices to show that
    the corresponding categories (HB) and (LB) are equivalent.
    By assumption, a splitting $\iota^\ast \scrD_{X/S} \simeq
    \cEnd_{\scrO_{Z_\chi}}(P)$ exits. So according to Morita theory
    \cite[(8.12)]{zbMATH05585185}, we have an equivalence of categories
    \[
        \catQCoh(Z_\chi) \simto \catQCoh(Z_\chi, \iota^\ast
        \scrD_{X/S}), \quad
        M \mapsto P \otimes_{\scrO_{Z_\chi}} M.
    \]
    So we only need to show under this equivalence, the extra properties listed
    above correspond to each other. We introduce some temporary notations as in
    the following Cartesian diagram
    \[\begin{tikzcd}
            W_\chi:=Z_\chi \times_{X'} X \ar[r, "\varphi"] \ar[d, "\tau"]
            & Z_\chi  \ar[d, "\pi\circ \iota"] \\
            X \ar[r, "F_{X/S}"] & X'.
    \end{tikzcd}\]
    and fix an object $M$ in $\catQCoh(Z_\chi)$.

    Recall Theorem~\ref{thm:azumaya-property}.\ref{lem:is-invertible} that the
    $\scrO_{W_\chi}$-module $\widetilde P=:L$ is locally free of rank $1$.
    Then we have canonical isomorphisms of $\scrO_X$-modules
    \begin{equation*}
    \begin{aligned}
        \widetilde {(\pi\circ \iota)_\ast(P \otimes_{\scrO_{Z_\chi}} M) }
        & \simeq \tau_\ast (\widetilde{P \otimes_{\scrO_{Z_\chi}} M}) \\
        & \simeq \tau_\ast ( L \otimes_{\scrO_{W_\chi}} \varphi^\ast M)
        & \text{(\cite[Corollaire~9.3.9]{EGAInew})} \\
        & \simeq \tau_\ast L \otimes_{\tau_\ast \scrO_{W_\chi}}
        \tau_\ast \varphi^\ast M \\
        & \simeq \tau_\ast L \otimes_{\tau_\ast \scrO_{W_\chi}}
        F^\ast_{X/S} (\pi\circ \iota)_\ast M.
    \end{aligned}
    \end{equation*}
    Note that $\tau$ is finite because $(\tau \circ \iota)$ is
    (Proposition~\ref{prop:proper-of-universal-spectral-curve}). Then, according
    to \eqref{eq:equivalence-finite-locally-free}, $\tau_\ast L$ is an
    $\tau_\ast\scrO_{W_\chi}$-module locally free of rank~$1$ over $X$.
    Therefore, Zariski locally on $X$, $\widetilde {(\pi\circ \iota)_\ast (P
    \otimes_{\scrO_{Z_\chi}} M)}$ is isomorphic to $F^\ast_{X/S} (\pi\circ
    \iota)_\ast M$. Moreover, $F_{X/S}$ is faithfully flat and locally of finite
    presentation, hence $\widetilde {(\pi\circ \iota)_\ast (P
    \otimes_{\scrO_{Z_\chi}} M)}$ is locally free of rank $r$ if and only if
    $(\pi\circ \iota)_\ast M$ is locally free of rank $r$. Due to the same
    reason, the canonical isomorphism of $\scrO_{X'}$-modules
    \[
        (\pi\circ \iota)_\ast \left(P \otimes_{\scrO_{Z_\chi}} M\right) \simeq
        (\pi\circ \iota)_\ast P \otimes_{(\pi \circ \iota)_\ast\scrO_{Z_\chi}}
        (\pi \circ \iota)_\ast M
    \]
    implies that Zariski locally over $X'$, $(\pi\circ \iota)_\ast \left(P
    \otimes_{\scrO_{Z_\chi}} M\right)$ is isomorphic to a direct sum of $p^d =
    \rk_{\scrO_{Z_\chi}} P$ copies of $(\pi\circ \iota)_\ast M$. Therefore,
    $(\pi \circ \iota)_\ast M$ has characteristic polynomial $\chi$ if and only
    if $(\pi\circ \iota)_\ast \left(P \otimes_{\scrO_{Z_\chi}} M\right)$ has
    characteristic polynomial $\chi^{p^d}$.
\end{proof}

\subsection{Further examples}
In the following examples, we restricts our attention to
$\leftidx{^\lambda}\cDer(X/S)$-modules with $\lambda = 0$ or $1$.

\begin{example}\label{exa:1-form-case}
    In case $r = 1$, given a $\chi \in \bfB_{X'/S,1}(T)$,
    $Z_\chi \into \cotsp{X_T'/T}$
    is isomorphic to the section $X_T' \to \cotsp{X_T'/T}^\ast$ of
    $\cotsp{X_T'/T} \to X_T'$ corresponding to the global section~$\omega$
    determined by~\eqref{eq:section-of-vector-bundles}.
    Recall Theorem~\ref{thm:azumaya-property}.\ref{lem:split-over-section} that,
    the Azumaya algebra $\scrD_{X/S}$ splits over $Z_{\chi}$.
\end{example}

\begin{example} \label{exa:computation-av}
    Suppose that $X/k$ is an abelian variety.
    Then $\bfB$ is representable by a $k$-scheme $B$ as
    in Example~\ref{exa:proper-over-k-representable}.
    Moreover, we know that $\Gamma(X, \scrO_X) = k$ because $X$
    is proper geometrically reduced and geometrically connected,
    and that $\Gamma(X, \Omega_{X/k}^1)$ is a $k$-vector space of dimension
    $d = \dim X$.
    Actually $\Omega_{X/k}^1 \simeq \bigoplus_{i = 1}^d \scrO_{X} \cdot
    \omega_i$, where $\omega_{i} \in \Gamma(X, \Omega_{X/k}^1)$, $i = 1, \ldots,
    d$, are the \emph{invariant differentials}.
    In other words, the sheaf $\Omega_{X/S}^1$ is \emph{globally} trivialised.
    Denote by $\partial_i$, $1 \leq i \leq d$ the $k$-dual of $\omega_i$.
    These $\omega_i$'s (resp.\ $\partial_i$'s)
    form not only a $k$-basis for the $k$-vector space
    $\Gamma(X, \Omega_{X/k}^1)$ (resp.\ ${(\Gamma(X, \Omega_{X/k}^1))}^\vee =
    \Gamma(X, {(\Omega_{X/k}^1)}^\vee)$),
    but also an $\scrO_X$-basis for the free $\scrO_X$-module
    $\Omega_{X/k}^1$ (resp. $(\Omega_{X/k}^1)^\vee$).

    Then apply the computations in Example~\ref{exa:computation}, we know that
    for any $\chi \in B(T)$, we have
    \begin{equation}
        Z_\chi = \cSpec_{\scrO_{X_T}}
        \frac{\scrO_{X_T} [ \partial_1, \ldots, \partial_d]}%
        {(g_1, \ldots, g_d, \ldots, g_{\underline{i}}, \ldots)},
        \label{eq:av-chi}
    \end{equation}
    with $g_{\underline{i}}$ has coefficient in
    $\Gamma(X_T, \scrO_{X_T}) = \Gamma(T, \scrO_T)$ (note $T/k$ is always flat).
    Now for each $\chi \in B(T)$, define
    \begin{equation}
        \widetilde{Z}_\chi = \cSpec_{\scrO_{X_T}}
        \frac{\scrO_{X_T} [ \partial_1, \ldots, \partial_d]}%
        {(g_1, \ldots, g_d)} \supseteq Z_\chi.
        \label{eq:av-chi-tilde}
    \end{equation}
    It is clear that the association of $\chi$ to $\widetilde Z_{\chi}$ is
    functorial, i.e., if $\chi' = \chi \circ \psi$ with $\psi: T' \to T$, then
    $\widetilde Z_{\chi'} = \widetilde X_{\chi} \times_T T'$.
    Note that in this case, $\widetilde{Z}_\chi$
    is always non-empty and is finite and flat over $X$, and $\widetilde
    Z_\chi/B$ is proper, for any $\chi \in \bfB(T)$, cf.
    Remark~\ref{rmk:can-be-empty-non-flat}.

    Naturally, for the universal spectral cover $Z_{\text{univ}}$, we we alse
    have a larger scheme $\tilde{Z}:=\widetilde{Z}_{\text{univ}}\supseteq
    Z_{\text{univ}}$. It has the universal property that for any $\chi \in
    B(T)$, $\widetilde{Z}_{\chi}$ is the pullback of $\widetilde{Z}$.
\end{example}

\begin{example} \label{exa:computation-univ}
    \begin{enumerate}[wide]
        \item \emph{Suppose that $X/S := X/k$ is a smooth proper scheme
            over a field $k$.}
            Hence Example~\ref{exa:proper-over-k-representable} applies.

            Denote by $(\omega_j)_{j \in J}$, the $k$-basis for
            $\bigoplus_{i=1}^r\Gamma(X, \Sym^i\Omega_{X/k}^1)$,
            and by $(\partial_j)_{j \in J}$ the corresponding $k$-dual basis
            for $\bigoplus_{i=1}^r\Gamma(X, \Sym^i\Omega_{X/k}^1)^\vee$.
            Then $\chi_{\text{univ}}$ is actually the tautological section
            \begin{align*}
                \sum_{J \in j} \partial_j \otimes \omega_j
                \in
                \bigg(
                    \Sym^\bullet \big(\bigoplus_{i = 1}^r
                    \Gamma(X, \Sym^i \Omega_{X/k}^1)\big)^\vee
                \bigg)
                \otimes_k
                \bigg(
                    \bigoplus_{i = 1}^r \Gamma(X, \Sym^i \Omega_{X/k}^1)
                \bigg).
            \end{align*}
        \item \emph{Suppose moreover that $X/k$ is a connected group scheme}
            (hence \emph{geometrically connected}).
            In other words, $X/k$ is an abelian variety.
            So Example~\ref{exa:computation-av} applies.

            Moreover, using notations in Example~\ref{exa:computation}, for each
            $m$, $(\underline{\omega}^{\underline{i}})_{|i| = m}$ form a
            $k$-basis for the $k$-vector space $\Gamma(X, \Sym^m\Omega_{X/k}^1)
            = \Sym^m \Gamma(X, \Omega_{X/k}^1)$, as well as the free
            $\scrO_X$-module $\Sym^m \Omega_{X/k}^1$; and
            $(\underline{\partial}^{\underline{i}})_{|i|= m}$ from a $k$-basis
            for the $k$-vector space $\Gamma(X, \Sym^m (\Omega_{X/k}^1)^\vee) =
            \Sym^m \Gamma(X, (\Omega_{X/k}^1)^\vee)$. Denote by
            $\underline{\partial}^{[\underline{i}]}$ the $k$-dual basis for
            ${(\Gamma(X, \Sym^m \Omega_{X/k}^1))}^\vee = \Gamma(X, (\Sym^m
            \Omega_{X/k}^1)^\vee)$. Note that
            $\underline{\partial}^{\underline{i}}$ and
            $\underline{\partial}^{[\underline{i}]}$ are not the same unless
            $|i|=1$.

        \item \emph{Suppose further that the characteristic of $k$ is larger
            than the fixed number $r$ (hence $n!\neq 0$).}

            In this case, for each $1 \leq m \leq r$,
            we have the following ``bad'' isomorphism%
            \footnote{
                The problem is that for any $k$-vector space $V$ and and any
                natural number $n$, the natural paring $ \Sym^n V \times \Sym^n
                (V^\vee) \longrightarrow k$, $(v_1, \ldots, v_n, \ell_1, \ldots,
                \ell_n) \longmapsto \sum_{\sigma \in \frakS_n} \prod_{i = 1}^n
                \ell_i(v_{\sigma(i)})$, is perfect if the characteristic of $k$
                is strictly larger than $n$. Moreover, the \emph{natural} map
                $\Sym^n (V^\vee) \longrightarrow (\Sym^n V)^\vee$ is an
                isomorphism if and only if $n!\neq 0$ in $k$.
                To avoid this problem, the more natural way is to use divided
                power algebras instead of symmetric algebras.
            }
            \begin{equation}
                \begin{tikzcd}[row sep = 0pt,
                    /tikz/column 1/.append style={anchor=base east},
                    /tikz/column 2/.append style={anchor=base west},
                    ]
                    \Gamma\big(X, \Sym^m (\Omega_{X/k}^1)^\vee\big)
                    \ar[r, "\sim"]
                    & \big(\Gamma(X, \Sym^m \Omega_{X/k}^1)\big)^\vee \\
                    \underline{\partial}^{\underline{i}} \ar[r, mapsto]
                    & \dfrac{1}{\underline{i}!}
                    \underline{\partial}^{\underline{i}}
                    =\underline{\partial}^{\underline{[i]}}
                \end{tikzcd}
                \label{eq:bad-isomorphism}
            \end{equation}
            of $k$-vector spaces, and
            $\frac{1}{\underline{i}!}\underline{\partial}^{\underline{i}}$,
            $\lvert\underline{i}\rvert = m$, form a basis form the
            $S(m,d)$-dimensional $k$-vector space on the right hand side.
            Observe that
            \[
                \Gamma(X_B, \Omega_{X_B/B}^1) =
                \bigg(
                    \Sym^\bullet \big(\bigoplus_{i = 1}^r
                    \Gamma(X, \Sym^i \Omega_{X/k}^1)\big)^\vee
                \bigg)
                \otimes_k
                \Gamma(X, \Omega_{X/k}^1).
            \]
            Write $\mu := \partial_1 \otimes \omega_1 + \partial_2 \otimes
            \omega_2 + \cdots + \partial_d \otimes \omega_d \in \Gamma(X_B,
            \Omega_{X_B/B}^1)$. Then using the ``bad'' isomorphism
            \eqref{eq:bad-isomorphism}, we can write
            \begin{align*}
                \chi_{\text{univ}} & = 
                \sum_{m = 1}^{r} \sum_{|\underline{i}| = m}
                \underline{\partial}^{[\underline{i}]} \otimes
                \underline{\omega}^{\underline{i}} \\
                & = \phantom{+}
                \partial_1 \otimes \omega_1 + \partial_2 \otimes \omega_2
                + \cdots + \partial_d \otimes \omega_d \\
                &\phantom{+} + \frac12 \partial_1^2 \otimes \omega_1^2 +
                \cdots + \frac12  \partial_{d}^r \otimes \omega_d^2
                + \sum_{1 \leq i,j \leq d, i \neq j} \partial_{i} \partial_j
                \otimes \omega_i \omega_j\\
                & \phantom{+} + \cdots \\
                & =: \mu + \frac{1}{2} \mu^2 + \cdots + \frac{1}{r!}\mu^r
                \in \bigoplus_{i = 1}^r \Gamma(X_B, \Sym^i \Omega_{X_B/B}^1),
            \end{align*}
            The corresponding characteristic polynomial then can be written as
            \begin{align*}
                \chi_{\text{univ}}(t) & =
                \sum_{i = 0}^r  \frac{{(-1)}^i}{i!} \mu^i t^{r-i} \\
                & = t^r - \mu t^{r-1} + \frac12\mu^2 t^{r-1} - \cdots +
                (-1)^r\frac{1}{r!} \mu^{r}
                \in \Gamma\big(X_B, (\Sym^\bullet\Omega_{X_B/B}^1)[t]\big).
            \end{align*}
        \item \emph{Assume moreover that $k$ is algebraically closed.}

            Now we can even factorise it as
            \[
                \chi_{\text{univ}}(t) =
                (t - c_1 \mu)(t - c_2 \mu) \cdots (t - c_r \mu),
            \]
            for some constants $c_i \in \Gamma(X, \scrO_X) = k$.
    \end{enumerate}
\end{example}

\begin{example}[Flat connections on $\scrO_X$ and their $p$-curvatures]
    \label{exa:p-curvature-of-structure-sheaf}
    It is easy to see that every flat $S$-connection on $\scrO_X$ has the form
    $\rmd + \omega$, where $\rmd: \scrO_X \to \Omega_{X/S}^1$ is the universal
    derivation and $\omega \in \Gamma(X, Z\Omega_{X/S}^1)) = \Gamma(X',
    F_{X/S,\ast}(Z\Omega_{X/S}^1))$ is a \emph{closed} one form, given by
    $\omega = \nabla(1)$. The $p$-curvature, $\psi_{\omega}: \scrO_{X} \to
    F_{X/S}^\ast \Omega_{X'/S}^1$ of the flat connection $(\scrO_X, \rmd +
    \omega)$, identified as a section of $F_{X/S}^\ast\Omega_{X'/S}^1$, is given
    by (see \cite[Proposition~(7.1.2)]{zbMATH03437308}, \cite[201,
    Lemma~4]{zbMATH03149486})
    \[
        F^\ast_{X/S}\left( (w^\ast - C_{X/S})(\omega) \right) \in
        \Gamma(X, F_{X/S}^\ast\Omega_{X'/S}^1).
    \]
    Actually, there is an exact sequence of sheaves of abelian groups on
    $X'_{\etale}$ (\cite[Proposition III.4.14]{zbMATH03674235}),
    \[\begin{tikzcd}
            0 \ar[r]
            & \scrO_{X'}^\ast \ar[r, "F^\ast_{X/S}"]
            & F_{X/S, \ast} \scrO_{X}^\ast \ar[r, "\rmd \log"]
            & F_{X/S, \ast} (Z\Omega_{X/S}^1) \ar[r, "w^\ast - C_{X/S}"]
            & \Omega_{X'/S}^1 \ar[r]
            & 0,
    \end{tikzcd}\]
    where $w: X':= X \times_{X, \Fr_S} S \to X$ is the projection and $C_{X/S}$
    is the Cartier operator in Theorem~\cite[Theorem~7.2]{zbMATH03350023}.
\end{example}

\begin{remark}
    \cite[A.7]{zbMATH06526258} gave a generalisation for the above example.
\end{remark}

\begin{example} \label{exa:surjectivity-of-p-curvature}
    Continue with the previous example and assume that $S = \Spec k$ is the
    spectrum of an algebraically closed field of characteristic $p > 0$, and
    that $X/k$ is an abelian variety. In this case, $w: X' \to X$ is an
    isomorphism, and every global $1$-from is closed.
    So the assignment to flat connection on $\scrO_X$ to its $p$-curvature
    reduces to the map%
    \footnote{
        The map $(w\inv)^\ast C_{X/k}$ is the original Cartier operator
        considered by Cartier in \cite[\S2.6]{zbMATH03149486}, see
        \cite[Remark~7.1.4]{zbMATH03437308}.
    }
    \[
        \id - (w\inv)^\ast C_{X/k}:
        \Gamma\left(X, \Omega_{X/k}^1\right)
        \to \Gamma\left(X, \Omega_{X/k}^1\right).
    \]
    It is then a classical result on $p\inv$-linear maps, see e.g.,
    \cite[Expos\'e III, \frno3, Lemma~3.3]{zbMATH01234891} and \cite[Expos\'e
    XXII, \frno1, Proposition~1.2]{SGA7II}, that this map is surjective. Note
    there that the map $(w\inv)^\ast C_{X/k}$ is $p\inv$-linear, where in the
    mentioned references, $p$-linear maps are discussed; however, the proof
    in~\cite{zbMATH01234891} runs verbatim for $p\inv$-linear maps, as we
    assumed that $k$ is algebraically closed.
\end{example}

\section{A simpson correspondence for abelian varieties}

From now on, assume that $X/S:= X/\Spec k:= X/k$ is an abelian variety over an
\emph{algebraically closed} field $k$ of characteristic $p>0$. Denote by
$e: \Spec k \to X$ the zero section. Then $X'/k$ is again an abelian variety.
Recall Example~\ref{exa:computation-av} that, the Hitchin base
$\bfB':=\bfB_{r}':= \bfB_{X'/k, r}$ is representable by a $k$-scheme
$B':= B'_r:= B_{X'/k, r}$. Moreover, recall
Example~\ref{exa:computation-av} that, we have a universal spectral
cover $Z/X'_{B'}$, and a larger scheme $\widetilde{Z} \supseteq Z$. In case of
$r = 1$, $\widetilde{Z} = Z$.

\subsection{Rank one case}
This result is due to Roman Bezrukavnikov, see \cite[Thm.~4.14]{zbMATH05277424}
and \cite[Appendices.~B and C]{zbMATH06782031}. We reproduce it as follows.

Noting that when $r = 1$, we know that the Hitchin base
\[
    B':=B'_1= \bfV(\Gamma(X', \Omega_{X'/k}^1))
\]
is the $k$-vector space of global sections of $\Omega_{X'/k}^1$. Hence a
$T$-point of $B'$ is the same as a one-form $\omega$ on $X_T'$. Any such
$\omega$ determines a closed subscheme $Z_{\omega}$ of the cotangent bundle of
$X_T'/T$. In fact, recall Example~\ref{exa:1-form-case} that, the inclusion of
$Z_\omega$ into the cotangent bundle is the same as a section of the projection
from the cotangent bundle to $X'_T$. In particular, we may identify the
inclusion of the universal spectral cover $Z$ into $\cotsp{X'_{B'}/B'}$ as the
section $X'_{B'} \to \cotsp{X'_{B'}/B'}$ determined by the $1$-form
$\chi_{\text{univ}} \in \Gamma(X'_{B'}, \Omega_{X'\times {B'}/B'}^1)$.
Meanwhile, in this case, $\catLoc_{X/k, 1}$ is usually denoted by
$\catPic^\natural_{X/k}$.
That is, for any $k$-scheme $T$, $\catPic^\natural_{X/k}(T)$ is the groupo\"id
of invertible sheaves with flat $T$-connections on $X_T$,
i.e, of $D_{X_T/T}$-modules that are invertible as $\scrO_{X_T}$-modules.
Let $\catPic_{X/k}$ be the (relative) \emph{Picard stack}%
\footnote{
    Picard stack is sometimes a confusing name: it can refer to a
    \emph{(strictly) commutative group stack},
    see \cite[Expos\'e XVIII, \S1.4]{SGA4III}.
}
of invertible sheaves, i.e., $\catPic_{X/k}(T)$ is the groupo\"id of invertible
sheaves on $X_T$ for any $T/k$. In other words, $\catPic_{X/k} = f_\ast(\rmB
\bbG_{\rmm, X}) = \Res_{X/k}(\rmB \bbG_{\rmm, X})$
(see \cite[Expos\'e XVIII, \S\S1.4.21 and 1.5.1]{SGA4III}).

For any scheme $T/k$, a \emph{rigidified invertible sheaf} $(L, \alpha)$
consists of an invertible sheaf $L$ on $X_T$ together with an isomorphism
$\alpha: e_T^\ast L \simeq \scrO_T$, where $e_T$ is the pullback of
the unit section $e$. A flat connection on $(L, \alpha)$ is just a flat
$T$-connection on $L$. Then there are \emph{$k$-group schemes} $\Pic_{X/k, e}$
and $\Pic_{X/k, e}^\natural$ over $k$ satisfying that for any $T/k$,
\begin{align*}
    \Pic_{X/k, e}(T) & = \{
        \text{isom.\ classes of rigidified invertible sheaves }
        (L, \alpha) \text{ over } X_T
    \}, \\
    \Pic_{X/k, e}^\natural(T) & = \{
        \text{isom.\ classes of rigidified invertible sheaves} \\
        & \pushright{\text{with a flat connection } (L, \alpha, \nabla)
        \text{ over } X_T}
    \}.
\end{align*}
The existence of the scheme $\Pic_{X/k, e}$ (i.e., the representability of the
associated fppf-sheaf of the functor as above) is the classical theory on Picard
functors, see \cite{MR2223410} for a detailed exposition;
while the existence of the scheme $\Pic_{X/k, e}^\natural$ is discussed
in~\cite{zbMATH03470573} and see also~\cite[Proposition~4.11]{zbMATH05277424}
and \cite[Appendix~B]{zbMATH05765015}.
Moreover, $\catPic_{X/k} = \Pic_{X/k} \times_k \bfB\bbG_{\rmm}$,
and $\catPic_{X/k}^\natural = \Pic_{X/k}^\natural \times_k \bfB\bbG_{\rmm}$.
Clearly there are natural maps $\catPic_{X/k}^\natural \to \catPic_{X/k}$
and $\Pic_{X/S,e}^\natural \to \Pic_{X/k, e}$, compatible with the projections.

\begin{proposition}
    The map
    \[
        c_{\dR}: \catPic_{X/k}^\natural \longrightarrow B'_1
    \]
    assigning to each invertible sheaf with a flat connection its $p$-curvature
    as defined in Proposition~\ref{prop:reduced-char-polyn} induces a $k$-group
    scheme homomorphism,
    \[
        c_{\dR}: \Pic_{X/k, e}^\natural \longrightarrow B'_1.
    \]
    Moreover, $\scrD_{X/k}$ splits canonically over $Z_{c_{\dR}}
    \subseteq \cotsp{X' \times \Pic^\natural/\Pic^{\natural}}$.
    In other words, once pulled back along the composition of natural maps
    (with the identification in Example~\ref{exa:1-form-case})
    \[
        \begin{tikzcd}[column sep = small]
            Z_{c_{\dR}} \simeq X' \times_k \Pic^\natural_{X/k, e}
            \ar[r, "\id \times c_{\dR}"] &[1em]
            X' \times_k B' \simeq Z \ar[r, hook] &
            \cotsp{X' \times B'/B'} \ar[r] &
            \cotsp{X'/k},
        \end{tikzcd}
    \]
    the Azumaya algebra $\scrD_{X/k}$ splits.
\end{proposition}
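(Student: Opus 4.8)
The plan is to establish the group-scheme statement and the splitting statement separately; the first is essentially formal, and the second is obtained from the Azumaya property of $\scrD_{X/k}$ (Theorem~\ref{thm:azumaya-property}) by producing an explicit splitting module from the universal flat invertible sheaf.

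For the group-scheme statement, recall that the group law on $\Pic_{X/k,e}^\natural$ is tensor product of rigidified invertible sheaves with their flat connections, $(L_1,\nabla_1)\otimes(L_2,\nabla_2)=(L_1\otimes L_2,\nabla_1\otimes 1+1\otimes\nabla_2)$, with unit $(\scrO_X,\rmd)$. First I would record that the $p$-curvature of a flat connection on an invertible sheaf is a genuine global $1$-form, because $\cEnd$ of an invertible sheaf is $\scrO_X$, and that it is additive under tensor product of connections: for the trivial bundle this is the additivity in the connection form of the formula in Example~\ref{exa:p-curvature-of-structure-sheaf}, and the general case follows by trivialising Zariski-locally. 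Hence the restriction of the map $c_{\dR}$ of Proposition~\ref{prop:reduced-char-polyn} to rank-one objects is a morphism of group-valued functors; since it takes values in the scheme $B'_1$ it factors through the fppf sheafification of $\catPic_{X/k}^\natural$, i.e.\ through $\Pic_{X/k,e}^\natural$, and a morphism of $k$-group schemes that is a homomorphism on all $T$-valued points is a homomorphism of group schemes. Finally $c_{\dR}(\scrO_X,\rmd)=0$, so the unit is sent to $0$.

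For the splitting statement, set $\Pic^\natural:=\Pic_{X/k,e}^\natural$, let $(\mathcal L,\nabla)$ be the universal rigidified invertible sheaf with flat connection on $X_{\Pic^\natural}/\Pic^\natural$, let $F$ denote the relative Frobenius $X_{\Pic^\natural}\to X'_{\Pic^\natural}$, and let $\iota\colon Z_{c_{\dR}}\to\cotsp{X'/k}$ be the composite displayed in the statement. All the constructions of Theorem~\ref{thm:azumaya-property} reduce étale-locally to the Weyl-algebra model of Theorem~\ref{thm:azumaya-local}, hence are compatible with base change, so the pullback of $\scrD_{X/k}$ to $\cotsp{X'_{\Pic^\natural}/\Pic^\natural}$ is the analogous Azumaya algebra $\scrD_{X_{\Pic^\natural}/\Pic^\natural}$ of rank $p^{2d}$, $d=\dim X$. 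Viewing $\mathcal L$ as a $D_{X_{\Pic^\natural}/\Pic^\natural}$-module, $\widetilde{F_\ast\mathcal L}$ is a $\scrD_{X_{\Pic^\natural}/\Pic^\natural}$-module on the relative cotangent bundle; since $\mathcal L$ is invertible the induced Higgs field on $F_\ast\mathcal L$ is multiplication by a relative $1$-form, which is by construction (Proposition~\ref{prop:reduced-char-polyn}, Lemma~\ref{lem:char-descent}) the form $c_{\dR}$, so the central action of $\Sym^\bullet\Theta_{X'_{\Pic^\natural}/\Pic^\natural}$ on $F_\ast\mathcal L$ factors through the quotient $\scrO_{Z_{c_{\dR}}}$ of Example~\ref{exa:1-form-case}, making $\widetilde{F_\ast\mathcal L}$ a sheaf on $Z_{c_{\dR}}$. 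Thus $\iota^\ast\scrD_{X/k}$ acts on $F_\ast\mathcal L$, which under the identification $Z_{c_{\dR}}\simeq X'_{\Pic^\natural}$ is locally free of rank $p^d$ over $\scrO_{Z_{c_{\dR}}}$ by Proposition~\ref{lem:norm-of-Frobenius}. I would then check that the structural ring map $\iota^\ast\scrD_{X/k}\to\cEnd_{\scrO_{Z_{c_{\dR}}}}(F_\ast\mathcal L)$ is an isomorphism: both sheaves are locally free of rank $p^{2d}$, so it suffices to check on geometric fibres, where $\iota^\ast\scrD_{X/k}$ is a matrix algebra of size $p^d$ and the fibre of $F_\ast\mathcal L$ is a nonzero $p^d$-dimensional module, hence the unique simple module, which forces the map to be an isomorphism; a fibrewise isomorphism of locally free sheaves of equal rank is an isomorphism. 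This gives the splitting, canonical because $(\mathcal L,\nabla)$ is; one may also recover $F_\ast\mathcal L\simeq\tau_\ast\widetilde P$ for the rank-one sheaf $\widetilde P$ on $Z_{c_{\dR}}\times_{X',F}X\simeq X_{\Pic^\natural}$ supplied by Theorem~\ref{thm:azumaya-property}.\ref{lem:is-invertible}, and verify $\widetilde P\simeq\mathcal L$.

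The step I expect to be the main obstacle is the Frobenius bookkeeping in the third paragraph: verifying that the $p$-curvature of the universal $(\mathcal L,\nabla)$ descends to the relative $1$-form $c_{\dR}$ on $X'_{\Pic^\natural}$, and that $\widetilde{F_\ast\mathcal L}$ is carried by the reduced section $Z_{c_{\dR}}$ rather than an infinitesimal thickening of it; this is exactly where Lemma~\ref{lem:char-descent}, Proposition~\ref{prop:reduced-char-polyn} and the triviality of $\Omega^1_X$ for an abelian variety intervene, along with making precise the base change of Theorem~\ref{thm:azumaya-property} over $\Pic^\natural$. The additivity of $p$-curvature and the fibrewise matrix-algebra argument are, by contrast, routine.
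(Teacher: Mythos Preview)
Your proposal is correct and takes essentially the same approach as the paper: the paper's proof simply invokes the general fact that an $A$-module locally free of rank $r$ for an Azumaya algebra $A$ of rank $r^2$ is automatically a splitting module, and your construction of $F_\ast\mathcal L$ together with the fibrewise verification is precisely an explicit unpacking of this fact applied to the universal flat invertible sheaf. Your flagged concern about thickenings is not an obstacle in rank one, since $\mathcal L$ is invertible and hence the central action on $F_\ast\mathcal L$ is literally multiplication by the $p$-curvature $1$-form, so the degree-one ideal $I_\chi$ already annihilates it.
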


\begin{proof}
    This scheme version is given~\cite[\S4.3]{zbMATH05277424} and
    a stack version is given in~\cite[Proposition~B.3.4]{zbMATH06782031}.

    Actually, this follows directly from the fact that if $A$ s an Azumaya
    algebra of rank $r^2$ as an $\scrO_X$-module, and if $M$ is an $A$-module
    locally free of rank $r$ as an $\scrO_X$-module, then $M$ is a splitting
    module of $A$.
\end{proof}

\begin{corollary} \label{cor:pic-equiv-to-splitting}
    There is an equivalence $\catPic^\natural_{X/k} \simeq
    \Res_{Z/B'}(\bfS_{\scrD}|_{Z})$ of stacks over $B'$. And in particular,
    $\Res_{Z/B'}(\bfS_{\scrD}|_Z)$ is algebraic.
\end{corollary}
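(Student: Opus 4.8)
The plan is to build the equivalence over $B'$ explicitly and check it fibrewise, building on the previous Proposition (which already supplies the canonical splitting) and on Proposition~\ref{prop:local-system-spectral-cover} specialised to $r=1$ and $L=\cDer(X/k)$, for which $\catVect_{X/k,1;L}=\catPic^\natural_{X/k}$ and $\scrU(L)=\scrD_{X/k}$.

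First I would describe the functor $\Phi\colon\catPic^\natural_{X/k}\to\Res_{Z/B'}(\bfS_{\scrD}|_Z)$ over $B'$. Given $T/k$ and $(L,\nabla)\in\catPic^\natural_{X/k}(T)$, the previous Proposition produces $\chi:=c_{\dR}(L,\nabla)\in B'(T)$ — which by Example~\ref{exa:1-form-case} is the section $\iota_\chi\colon Z_\chi\hookrightarrow\cotsp{X'_T/T}$ of $\pi$ determined by the corresponding one-form — together with a canonical splitting of $\iota_\chi^\ast\scrD_{X/k}$, the pullback of $\scrD_{X/k}$ along $Z_\chi\hookrightarrow\cotsp{X'_T/T}\to\cotsp{X'/k}$. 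Unwinding this via Proposition~\ref{prop:local-system-spectral-cover}, $(L,\nabla)$ corresponds to a $\iota_\chi^\ast\scrD_{X/k}$-module $M$ on $Z_\chi$, and the canonical splitting is the Morita isomorphism $\iota_\chi^\ast\scrD_{X/k}\xrightarrow{\sim}\cEnd_{\scrO_{Z_\chi}}(M)$, which exists because an Azumaya algebra of rank $p^{2d}$ is split by any module locally free of rank $p^d$. This is functorial in $T$ and $\bbG_{\rmm}$-equivariant, so it assembles to a morphism of stacks over $B'$ whose fibre over $\chi\in B'(T)$ is the functor $c_{\dR}^{-1}(\chi)\xrightarrow{\sim}\{M\text{ as in (LB)}\}\to\{\text{splittings of }\iota_\chi^\ast\scrD_{X/k}\}$, $M\mapsto(M,\text{Morita})$.

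Then I would check that $\Phi$ is an equivalence fibrewise, which amounts to showing that the two conditions of (LB) in Proposition~\ref{prop:local-system-spectral-cover} are equivalent, for $r=1$, to ``$M$ is a splitting module of $\iota_\chi^\ast\scrD_{X/k}$''. For this one uses that in the rank-one case $\pi\circ\iota_\chi\colon Z_\chi\xrightarrow{\sim}X'_T$, and that $Z_\chi\times_{X'_T,F_{X_T/T}}X_T\simeq X_T$ with $\tau$ identified with the relative Frobenius $F_{X_T/T}$ (base-change compatibility of relative Frobenius). If $\widetilde{\pi_\ast\iota_\ast M}$ is locally free of rank~$1$ on $X_T$, then $M\simeq\pi_\ast\iota_\ast M$ is locally free of rank $p^d=\sqrt{p^{2d}}$ over $\scrO_{Z_\chi}$, hence a splitting module. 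Conversely, a splitting module $M$ is locally free of rank $p^d$, so Theorem~\ref{thm:azumaya-property}.\ref{lem:is-invertible} gives $M\simeq\tau_\ast\widetilde M$ for a line bundle $\widetilde M$ on $X_T$, whence $\widetilde{\pi_\ast\iota_\ast M}\simeq\widetilde M$ is locally free of rank~$1$; moreover the characteristic-polynomial condition is automatic, because $Z_\chi$ is the section cut out by the one-form $\chi$, so $\Theta_{X'/k}=\Sym^1\Theta_{X'/k}$ acts on the rank-$p^d$ bundle $\pi_\ast\iota_\ast M$ through $\scrO_{Z_\chi}\simeq\scrO_{X'_T}$ as the scalar $\chi$, whose characteristic polynomial is $(t-\chi)^{p^d}=\chi^{p^d}$. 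Isomorphisms on the two sides match, the automorphism groups both being $\Gamma(X'_T,\scrO_{X'_T}^\ast)=\Gamma(T,\scrO_T^\ast)=\bbG_{\rmm}(T)$ since $X'/k$ is an abelian variety, so $\Phi$ is fully faithful; summing over $\chi\in B'(T)$ gives the equivalence of stacks over $B'$. Finally, $\catPic^\natural_{X/k}\simeq\Pic_{X/k,e}^\natural\times_k\bfB\bbG_{\rmm}$ is algebraic — the scheme $\Pic_{X/k,e}^\natural$ having been recalled above — so $\Res_{Z/B'}(\bfS_{\scrD}|_Z)$ is algebraic as well. I expect the main obstacle to be exactly this reconciliation of the two defining conditions of (LB) with being a splitting module: it rests on the rank count, on Theorem~\ref{thm:azumaya-property}.\ref{lem:is-invertible} for the descent along $F_{X/S}$, and on the fact that for $r=1$ the spectral cover is a section, which is what makes the characteristic-polynomial constraint disappear.
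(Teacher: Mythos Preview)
Your proof is correct and follows essentially the same route as the paper: both use that for $r=1$ the spectral cover $Z_\chi\simeq X'_T$ is a section, and both identify rank-one local systems with splitting modules of $\iota^\ast\scrD_{X/k}$ via Morita/the splitting principle. The only cosmetic difference is direction and packaging: the paper writes down the inverse functor $P\mapsto\widetilde{(\pi_T\circ\iota)_\ast P}$ (tensoring with the tautological Higgs bundle $M=\scrO_{Z_\omega}$ and invoking \S\ref{sec:splitting-principle}), whereas you build the forward functor and check the (LB) conditions of Proposition~\ref{prop:local-system-spectral-cover} against ``$M$ is a splitting module'' by hand.
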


\begin{proof}
    This is just a stack version \cite[Proposition~4.13, 1)]{zbMATH05277424},
    which proved a \emph{rigiedfied} version of this proposition. Recall that,
    for any $\omega \in B'(T)$, $\catPic^\natural_{X/k}(T)$ is the category of
    invertible sheaves on $X_T$ with a flat $T$-connection, and that
    $\Res_{Z/B'}(\bfS_{\scrD}|_Z)(T)$ is the category of splittings of the pull
    back of $\scrD_{X/k}$ to $Z_\omega \subseteq \cotsp{X'\times T/T}$. Write
    $\iota: Z_\omega \into \cotsp{X'\times T/ T}$ and $\pi_T: \cotsp{X'\times
    T/T} \to X'_T$ for the inclusion and the projection.

    In this case, we have (Example~\ref{exa:1-form-case}) that $\pi_T \circ
    \iota$ is an isomorphism. Let $M$ be $\scrO_{Z_\omega}$, which defines a
    rank $1$ Higgs bundle on $X'$. Then according to the arguments in the
    splitting principle in \S\ref{sec:splitting-principle}, we know that
    \begin{align*}
        \Res_{Z/B'}(\bfS_{\scrD}|_Z)(T)
        & \longrightarrow \catPic_{X/k}^\natural(T)
        = \catLoc_{X/k, 1}(T) \\
        P & \longmapsto \widetilde{(\pi_T\circ \iota)_\ast (P)}
    \end{align*}
    is an equivalence.

    Algebraicity of $\Res_{Z/B'}(\bfS_{\scrD}|Z)$ follows from that
    $\catPic_{X/k}^\natural$ is algebraic. This also follows from
    \cite[Thm.~1.5]{zbMATH05049029}, because of the fact that $Z/B'$ is proper
    and flat.
\end{proof}

\begin{remark}
    In \cite{zbMATH06782031}, It is further proved that $X^\natural \simeq
    \scrT_{\scrD}$, where
    \[
        X^\natural = \Pic_{X/k}^\natural \times_{\Pic_{X/k}} X^\vee
    \]
    is the \emph{universal (vector) extension} of $X$
    \cite{zbMATH03470573}, and $\scrT_{\scrD}$ is the stack of \emph{tensor
    splittings}, or \emph{multiplicative splittings} in the terminology of
    \cite{zbMATH05277424}.
\end{remark}

\begin{proposition} \label{prop:smooth-and-surjective}
For abelian varieties, the morphism tangent map
\[
    c_{\dR}: \catPic_{X/k}^\natural \to B'_1
\]
is \emph{smooth} and \emph{surjective}. In particular, it is formally
smooth.
\end{proposition}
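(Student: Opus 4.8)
The plan is to reduce the statement to a computation of $\ker(c_{\dR})$ and then invoke the fibral criterion for smoothness. First I would pass from the stack to a group scheme: since $X$ has the rational point $e$, the Picard functor is represented by the rigidified scheme, so $\catPic^\natural_{X/k} = \Pic^\natural_{X/k,e}\times_k\bfB\bbG_{\rmm} =: G\times_k\bfB\bbG_{\rmm}$, and as the $p$-curvature is insensitive both to a rigidification and to the $\bbG_{\rmm}$-gerbe, $c_{\dR}$ factors through the smooth surjective projection $\catPic^\natural_{X/k}\to G$. Since smoothness and surjectivity can be checked after a smooth surjective base change of the source, it suffices to prove that the $k$-group homomorphism $c_{\dR}\colon G\to B'_1$ is smooth and surjective.

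Surjectivity is essentially already in hand: by Example~\ref{exa:surjectivity-of-p-curvature} every global $1$-form on $X'$, i.e.\ every $k$-point of $B'_1$, is the $p$-curvature of a flat connection on $\scrO_X$, and such a pair is a $k$-point of $G$. Hence $c_{\dR}$ is surjective on $k$-points, so its scheme-theoretic image --- a closed subgroup of the reduced irreducible scheme $B'_1$ containing every closed point --- is all of $B'_1$. Invoking the standard fact that a homomorphism of finite-type group schemes over a field is faithfully flat onto its scheme-theoretic image (\cite[Exp.~$\mathrm{VI}_{\mathrm{B}}$]{SGA3I}), I get that $c_{\dR}$ is faithfully flat and, in particular, surjective as a morphism of stacks.

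The heart of the matter is then to identify $\ker(c_{\dR})$. Here I would use Cartier descent (\cite[Thm.~5.1]{zbMATH03350023}): for every $k$-scheme $T$, pulling back along $F_{X_T/T}$ and equipping with the canonical connection defines a symmetric monoidal equivalence from quasi-coherent $\scrO_{X'_T}$-modules to quasi-coherent $\scrO_{X_T}$-modules carrying a flat $T$-connection with vanishing $p$-curvature, compatibly with rigidification along $e$ (note $F_{X/k}\circ e = e'$, the identity of $X'$). Passing to invertible objects, this gives an isomorphism of $k$-group functors, hence of $k$-group schemes,
\[
    \ker\!\big(c_{\dR}\colon G\to B'_1\big)\ \simeq\ \Pic_{X'/k,e'}.
\]
Since $X'/k$ is an abelian variety, $\Pic_{X'/k,e'}$ is a smooth $k$-group scheme, its identity component being the dual abelian variety $(X')^\vee$ and its component group the \'etale group $\mathrm{NS}(X')$. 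Now $c_{\dR}\colon G\to B'_1$ is flat and locally of finite presentation with all fibres smooth --- the fibre over the identity is $\Pic_{X'/k,e'}$, every other fibre a torsor under it --- so by the fibral criterion (\citeSP{01V8}) it is smooth. Combining with the first paragraph, $c_{\dR}\colon\catPic^\natural_{X/k}\to B'_1$ is smooth and surjective; being of finite type over $k$ it is locally of finite presentation, so it is also formally smooth. As a byproduct this reproves that $\Pic^\natural_{X/k}$ is smooth over $k$, of dimension $2\dim X$.

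I expect the main obstacle to be the kernel identification: one must run Cartier descent \emph{in families} over arbitrary $T/k$ so that it produces an isomorphism of group functors, check monoidality so that the group laws (tensor products) correspond, and verify compatibility with the rigidification; one must also make sure the pointwise surjectivity of $c_{\dR}$ upgrades to scheme-theoretic surjectivity so that the faithful-flatness statement applies. An alternative to the kernel computation, if one imports smoothness of $\Pic^\natural_{X/k}$ from \cite{zbMATH03470573}, is to note that $\ker(c_{\dR})$ has dimension $\dim X$, deduce that $\mathrm{Lie}(c_{\dR})\colon H^1_{\dR}(X/k)\to H^0(X',\Omega^1_{X'/k})$ is surjective, and conclude by the Jacobian criterion for a map between smooth $k$-schemes; but the route above is more self-contained.
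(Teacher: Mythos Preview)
Your argument is correct and takes a genuinely different route from the paper's for the smoothness half. The paper simply asserts that the differential
\[
\rmd c_{\dR}\colon \bfV\!\big(\rmH^1_{\dR}(X/k)\big)\longrightarrow \bfV\!\big(\Gamma(X',\Omega^1_{X'/k})\big)
\]
is surjective, citing \cite[Thm.~4.14]{zbMATH05277424}, and concludes by the Jacobian criterion (so it silently imports the smoothness of $\Pic^{\natural}_{X/k,e}$). Surjectivity is handled exactly as you do, via Example~\ref{exa:surjectivity-of-p-curvature}. Your approach instead identifies $\ker(c_{\dR})\simeq \Pic_{X'/k,e'}$ by Cartier descent in families, notes this is smooth because $X'$ is an abelian variety, and then applies the fibral criterion. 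The payoff is that your argument is self-contained---it does not outsource the key computation to \cite{zbMATH05277424}---and it yields the smoothness of $\Pic^{\natural}_{X/k,e}$ as a corollary rather than an input. The paper's route is shorter but strictly less informative; you even flag it yourself as the ``alternative'' in your final paragraph.

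One small correction: $\Pic^{\natural}_{X/k,e}$ is only \emph{locally} of finite type over $k$ (the component group $\mathrm{NS}(X)$ is infinite), so the phrase ``homomorphism of finite-type group schemes'' is not literally accurate. This does not break anything: the surjectivity in Example~\ref{exa:surjectivity-of-p-curvature} already comes from flat connections on $\scrO_X$, so you may run the whole argument on the identity component $G^0$, which is of finite type, and then propagate smoothness to the other components by translation.
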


\begin{proof}
    Smoothness follows from the fact that
    \[
        \rmd c_{\dR}:
        \bfV(\rmH^1_{\dR}(X/k)) \to \bfV(\Gamma(X', \Omega_{X'/k}^1))
    \]
    is surjective (see \cite[Thm.~4.14]{zbMATH05277424}). Surjectivity is a
    consequence of Example~\ref{exa:surjectivity-of-p-curvature}.
\end{proof}

\begin{corollary}[Bezrukavnikov]
    \label{thm:splits-1-form}
    The Azumaya algebra $\scrD_{X/S}$ splits over the \emph{formal
    neighbourhood} of each 1-form (cf.~\cite[Thm~4.14]{zbMATH05277424}).
\end{corollary}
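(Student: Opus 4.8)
The plan is to deduce this from Proposition~\ref{prop:smooth-and-surjective} and Corollary~\ref{cor:pic-equiv-to-splitting}, which together turn the assertion into an infinitesimal lifting problem. Fix a $1$-form, i.e.\ a $k$-point $\omega\in B'_1(k)$; let $\hat{\scrO}:=\hat{\scrO}_{B'_1,\omega}$ be the completed local ring, $\frakm:=\frakm_\omega\subseteq\hat{\scrO}$ its maximal ideal, and $A_n:=\hat{\scrO}/\frakm^{n+1}$, so $A_0=k$ and $\hat{\scrO}=\varprojlim_n A_n$. What must be shown is that $\scrD_{X/k}$ splits over the completion of $Z$ along the fibre $Z_\omega$; by Example~\ref{exa:1-form-case} this completion is canonically the completion of $\cotsp{X'/k}$ along the section $Z_\omega\hookrightarrow\cotsp{X'/k}$ that $\omega$ determines, which is what is meant by the formal neighbourhood of the $1$-form $\omega$. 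Now by Corollary~\ref{cor:pic-equiv-to-splitting}, splittings of $\scrD_{X/k}$ over $Z\times_{B'_1}T$ (for a $B'_1$-scheme $T$) are the same as objects of $\catPic^\natural_{X/k}(T)$ lying over the structure map $T\to B'_1$; applying this with $T=\Spec\hat{\scrO}$ and its canonical map to $B'_1$ (which sends the closed point to $\omega$), it suffices to lift this map along $c_{\dR}\colon\catPic^\natural_{X/k}\to B'_1$.

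I would build the lift one thickening at a time. At the closed point: $c_{\dR}$ is surjective by Proposition~\ref{prop:smooth-and-surjective} (this rests on Example~\ref{exa:surjectivity-of-p-curvature}), so the fibre $\catPic^\natural_{X/k}\times_{B'_1,\omega}\Spec k$ is a non-empty algebraic stack locally of finite type over the algebraically closed field $k$ and hence has a $k$-point; this gives $\ell_0\colon\Spec A_0\to\catPic^\natural_{X/k}$ over $\omega$. Inductively, given $\ell_n\colon\Spec A_n\to\catPic^\natural_{X/k}$ over $\Spec A_n\to B'_1$, the closed immersion $\Spec A_n\hookrightarrow\Spec A_{n+1}$ has square-zero (hence nilpotent) ideal $\frakm^{n+1}/\frakm^{n+2}$, so formal smoothness of $c_{\dR}$ — which holds because $c_{\dR}$ is smooth (Proposition~\ref{prop:smooth-and-surjective}) — yields $\ell_{n+1}\colon\Spec A_{n+1}\to\catPic^\natural_{X/k}$ over $\Spec A_{n+1}\to B'_1$ and restricting to $\ell_n$. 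To avoid tracking $2$-morphisms one may run the whole induction with the \emph{scheme} $\Pic^\natural_{X/k,e}$ and the rigidified version of Corollary~\ref{cor:pic-equiv-to-splitting} used in its proof, adding the $\bfB\bbG_{\rmm}$-factor at the end.

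Through Corollary~\ref{cor:pic-equiv-to-splitting} the compatible system $(\ell_n)_n$ is a compatible system of splittings $\scrD_{X/k}|_{X'\times_k\Spec A_n}\simeq\cEnd_{\scrO}(P_n)$ over the infinitesimal neighbourhoods of $Z_\omega$, which is precisely a splitting over the formal neighbourhood; if one wants an honest splitting $\scrD_{X/k}|_{X'\times_k\Spec\hat{\scrO}}\simeq\cEnd_{\scrO}(P)$ over the noetherian scheme $X'\times_k\Spec\hat{\scrO}$, one invokes Grothendieck's existence theorem, using that $X'/k$ is proper and $\hat{\scrO}$ is complete noetherian local. The only genuinely substantive ingredients are Proposition~\ref{prop:smooth-and-surjective} (smoothness via Bezrukavnikov's computation of $\rmd c_{\dR}$, surjectivity via the Cartier operator) and the equivalence of Corollary~\ref{cor:pic-equiv-to-splitting}; granted these, the whole assertion is essentially formal, which is why it is phrased as a corollary, and the only step requiring any care is the inductive lifting, handled by formal smoothness once the closed point has been lifted.
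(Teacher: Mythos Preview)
Your proposal is correct and follows essentially the same route as the paper: translate via Corollary~\ref{cor:pic-equiv-to-splitting} into a lifting problem for $c_{\dR}$, then use formal smoothness from Proposition~\ref{prop:smooth-and-surjective} to lift inductively through the infinitesimal thickenings. The only cosmetic difference is the base case: the paper invokes the explicit splitting over $Z_\omega$ from Example~\ref{exa:1-form-case} (ultimately Theorem~\ref{thm:azumaya-property}.\ref{lem:split-over-section}), whereas you obtain the initial lift from surjectivity of $c_{\dR}$; both are valid and amount to the same thing through the equivalence of Corollary~\ref{cor:pic-equiv-to-splitting}.
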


\begin{proof}
    Recall Example~\ref{exa:1-form-case} that $\scrD_{X/S}$ splits over the
    graph of $1$-forms. Using the formal smoothness of $c_{\dR}$ in
    Proposition~\ref{prop:smooth-and-surjective}, as well as the equivalence in
    Corollary~\ref{cor:pic-equiv-to-splitting}, we can conclude that the
    splitting lifts to the formal neighbourhood.
\end{proof}

\subsection{Higher rank case}
\label{ssec:Higer-rank-case}
Now we deal with the higher rank case. To this aim, we will mainly use the
computations that we have done in Examples~\ref{exa:computation},
\ref{exa:computation-av} and~\ref{exa:computation-univ}. In particular,
recall~\eqref{eq:av-chi}, for any $\chi\in B'_r(k)$, we know that $Z_\chi$ is
the closed subscheme of the cotangent bundle of ${X'_T/T}$, cut out by the
$S(r, d)$ equations $g_1, \ldots, g_d, \ldots$ appeared as coefficients of
$(\pi^\ast \underline{\omega}^{\underline{i}})$
in~\eqref{eq:local-defining-equation}. And we defined a larger closed
subscheme~\eqref{eq:av-chi-tilde} that is cut out by $d$ polynomials
$g_1, \ldots, g_d$, each of which is a polynomial in only one variable, that
has coefficients in $\Gamma(X', \scrO_X') = k$. Let $Z/X'_{B'}$ and
$\widetilde Z/X'_{B'}$ be the universal families as defined
in~Example~\ref{exa:computation-av}.

\begin{corollary} \label{cor:splits-in-general}
    For any $\chi \in B'_r(k)$, $\scrD_{X/S}$ splits over the formal
    neighbourhood of $\widetilde Z_\chi$ (hence over that of $Z_\chi$ if $Z_\chi
    \neq \emptyset$).
\end{corollary}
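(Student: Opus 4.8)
The plan is to reduce everything to Bezrukavnikov's splitting over the formal neighbourhood of a single $1$-form, Corollary~\ref{thm:splits-1-form}, exploiting the fact that on an abelian variety the cotangent bundle is trivial and that $\widetilde{Z}_\chi$ was built precisely so as to have a simple shape. First I would unwind Example~\ref{exa:computation-av}. Since $\Omega_{X'/k}^1 \simeq \bigoplus_{i=1}^d \scrO_{X'}\cdot\omega_i$ is globally trivialised by the invariant differentials, we have $\cotsp{X'/k} = \bfV(\Omega_{X'/k}^1) = X'\times_k\Spec k[\partial_1,\ldots,\partial_d]$, and \eqref{eq:av-chi-tilde}, applied with $T=\Spec k$ so that all coefficients lie in $k$, reads
\[
    \widetilde{Z}_\chi = X'\times_k\Spec\frac{k[\partial_1,\ldots,\partial_d]}{(g_1,\ldots,g_d)} =: X'\times_k V,
\]
where each $g_m=g_m(\partial_m)\in k[\partial_m]$ is a one-variable polynomial of degree $r$. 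Because $k$ is algebraically closed, $g_m$ splits into linear factors, so by the Chinese remainder theorem $k[\partial_m]/(g_m)$ is a finite product of local artinian rings; hence $V=\coprod_{(c_\bullet)} V_{(c_\bullet)}$ is a finite disjoint union of local artinian $k$-schemes, with $V_{(c_\bullet)}$ supported at the closed point $(c_1,\ldots,c_d)\in\bfA^d_k(k)$, the tuples running over those with $g_m(c_m)=0$ for all $m$. Therefore the underlying closed set decomposes as $|\widetilde{Z}_\chi| = \coprod_{(c_\bullet)} X'\times\{(c_1,\ldots,c_d)\}$, and, under the identification of sections of $\cotsp{X'/k}\to X'$ with global $1$-forms via \eqref{eq:section-of-vector-bundles} (see also Example~\ref{exa:1-form-case}), the piece $X'\times\{(c_1,\ldots,c_d)\}$ is exactly the graph of the constant $1$-form $\omega_{(c_\bullet)}:=\sum_{m=1}^d c_m\omega_m\in\Gamma(X',\Omega_{X'/k}^1)$.

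Next I would pass to formal completions. The formal completion of $\cotsp{X'/k}$ along $|\widetilde{Z}_\chi|$ depends only on this closed set, which is a disjoint union of finitely many \emph{clopen} pieces (the factor $X'$ being connected); hence it is the disjoint union of the formal completions of $\cotsp{X'/k}$ along the graphs of the finitely many $1$-forms $\omega_{(c_\bullet)}$. By Corollary~\ref{thm:splits-1-form}, $\scrD_{X/S}$ splits over the formal neighbourhood of each $\omega_{(c_\bullet)}$, and gluing these splittings over the disjoint union gives a splitting of $\scrD_{X/S}$ over the formal neighbourhood of $\widetilde{Z}_\chi$. For the parenthetical claim, note that when $Z_\chi\neq\emptyset$ the inclusion of ideals $(g_1,\ldots,g_d)\subseteq(g_1,\ldots,g_d,\ldots)$ coming from \eqref{eq:av-chi} and \eqref{eq:av-chi-tilde} yields a closed immersion $Z_\chi\into\widetilde{Z}_\chi$; thus $I_{\widetilde{Z}_\chi}^{\,n+1}\subseteq I_{Z_\chi}^{\,n+1}$ for every $n$, so each infinitesimal neighbourhood of $Z_\chi$ sits inside the corresponding one of $\widetilde{Z}_\chi$, and restricting the splitting just obtained along these closed immersions and passing to the limit produces a splitting over the formal neighbourhood of $Z_\chi$.

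Given Corollary~\ref{thm:splits-1-form}, the proof is essentially formal, and the one step that demands care is the first: recognising that $\widetilde{Z}_\chi$ — being by construction a product $X'\times V$ with $V$ a ``constant'' finite subscheme of $\bfA^d_k$ cut out by one-variable polynomials — has underlying set a finite disjoint union of graphs of $1$-forms. This is the very reason $\widetilde{Z}_\chi$ was introduced, and it is where one uses both that $X$ is an abelian variety (so that $\Omega_{X'/k}^1$ is trivial and $\cotsp{X'/k}$ is a product) and that $k$ is algebraically closed (so that the $g_m$ factor into linear terms).
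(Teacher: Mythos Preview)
Your proof is correct and follows essentially the same approach as the paper: factor the one-variable polynomials $g_m$ over the algebraically closed field $k$, recognise $\widetilde{Z}_\chi$ as a finite (disjoint) union of infinitesimal thickenings of graphs of $1$-forms, and invoke Corollary~\ref{thm:splits-1-form} on each piece. Your write-up is in fact more explicit than the paper's about the Chinese remainder decomposition and the passage to formal completions, which only improves clarity.
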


\begin{proof}
    Recall that $\widetilde{Z}_\chi$ is cut out by $d$ polynomials $g_i \in
    k[\partial_i]$, $1 \leq i \leq d$. Since $k$ is \emph{algebraically closed}
    by assumption, each $g_i$ factors as a product $\prod_{m = 1}^r (\partial_i
    - c_{i, m})$, $c_{i, m} \in k$. So we can conclude that $\widetilde{Z}_\chi$
    is a union of (possibly non-reduced) closed subschemes of the $j$-th
    infinitesimal neighbourhood of graphs of some $1$-forms, $j \leq r$. Hence,
    according to Corollary~\ref{thm:splits-1-form}, the Azumaya algebra
    $\scrD_{X/k}$ splits over the formal neighbourhood of $\widetilde{Z}_\chi$,
    \textit{a fortiori}, over that of $Z_\chi$.
\end{proof}

\begin{proposition} \label{prop:stack-of-splittings-is-algebraic}
    Let $\bfS:=\Res_{\widetilde{Z}/B'}(\bfS_{\scrD}|_{\widetilde{Z}}) \to B'$ be
    the stack of splittings $\scrD_{X/k}$ relative to $\widetilde{Z}/B'$. Then
    the stack $\bfS/B'$ is algebraic, and it is smooth and surjective over $B'$.
    Moreover, $\bfS$ is a $\catPic_{\widetilde{Z}/B'}$-torsor.
\end{proposition}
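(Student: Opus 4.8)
The plan is to establish the four assertions — algebraicity, smoothness, surjectivity, and the torsor property — in turn, reducing each to Corollary~\ref{cor:splits-in-general} and to standard facts about Azumaya algebras and Weil restriction. The key structural input is the explicit shape of $\widetilde Z$: by Example~\ref{exa:computation-av}, $\widetilde{Z}$ is cut out of $\cotsp{X'_{B'}/B'}=\cotsp{X'/k}\times_k B'$ by the monic one-variable polynomials $g_1(\partial_1),\dots,g_d(\partial_d)$ of degree $r$, whose coefficients lie in $\scrO(B')$ (they are built from the constant sections $a_i$, cf.~\eqref{eq:local-defining-equation}). Hence $\widetilde{Z}=X'\times_k W$ with $W:=\Spec\!\big(\scrO(B')[\partial_1,\dots,\partial_d]/(g_1,\dots,g_d)\big)\to B'$ \emph{finite locally free} of rank $r^d$, so $\widetilde Z\to B'$ is proper, flat and finitely presented. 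Since $\bfS_{\scrD}|_{\widetilde Z}$ is a $\bbG_{\rmm}$-gerbe — in particular an algebraic stack locally of finite presentation — over $\widetilde Z$, the algebraicity and local finite presentation of $\bfS=\Res_{\widetilde Z/B'}(\bfS_{\scrD}|_{\widetilde Z})$ over $B'$ follow from the algebraicity of restriction-of-scalars stacks along proper flat finitely presented morphisms, exactly as in Corollary~\ref{cor:pic-equiv-to-splitting}; see \cite[Thm.~1.5]{zbMATH05049029}.

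I would then check that $\catPic_{\widetilde Z/B'}\to B'$ is smooth. For any $B'$-scheme $T$ one has $\widetilde Z_T=X'\times_k W_T=X'_{W_T}$, whence $\catPic_{\widetilde Z/B'}\simeq\Res_{W/B'}\big(\catPic_{X'/k}\times_k W\big)$; here $\catPic_{X'/k}\simeq\Pic_{X'/k}\times_k\rmB\bbG_{\rmm}$ is smooth over $k$ because the Picard scheme of an abelian variety is smooth and $\rmB\bbG_{\rmm}$ is smooth. So $\catPic_{X'/k}\times_k W$ is smooth over $W$, and since $W/B'$ is finite locally free (in particular flat) the Weil restriction stays smooth over $B'$: base-changing a square-zero extension of $B'$-schemes along the flat map $W/B'$ yields a square-zero extension of $W$-schemes, so the infinitesimal lifting criterion is inherited. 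In particular, any line bundle on $\widetilde Z_{T_0}$ extends to $\widetilde Z_T$ along a square-zero extension $T_0\hookrightarrow T$ over $B'$.

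The action of $\catPic_{\widetilde Z/B'}$ on $\bfS$ is the relative form of the standard statement that the splittings of an Azumaya algebra form a pseudotorsor under the Picard stack: a splitting $(P,\alpha)$ of $\iota^\ast\scrD_{X/k}$ over $\widetilde Z_T$ and a line bundle $\mathcal{L}$ on $\widetilde Z_T$ produce a new splitting $(P\otimes\mathcal{L},\alpha)$ via the canonical $\cEnd(P)\simeq\cEnd(P\otimes\mathcal{L})$, every splitting arises this way for a unique $\mathcal{L}$, and the automorphisms of a splitting are just the scalars; this exhibits $\bfS_{\scrD}|_{\widetilde Z}$ as a $\catPic_{\widetilde Z}$-pseudotorsor over $\widetilde Z$ (\cite[\S\S12.3.5, 12.3.6]{zbMATH06561920}, \cite[Ch.~V, \S4.2]{zbMATH03358638}), and applying $\Res_{\widetilde Z/B'}$, together with $\Res_{\widetilde Z/B'}\catPic_{\widetilde Z}=\catPic_{\widetilde Z/B'}$, gives the simply transitive action on $\bfS$. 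To see that $\bfS\to B'$ is smooth and surjective I would invoke Corollary~\ref{cor:splits-in-general}: for $\chi\in B'(k)$ it provides a splitting of $\scrD_{X/k}$ over the formal neighbourhood of $\widetilde Z_\chi$ inside the \emph{fixed} space $\cotsp{X'/k}$, which restricts to a splitting over $\widetilde Z_\chi$, hence to a $k$-point of $\bfS_\chi$; thus $\bfS\to B'$ is surjective on closed points. Moreover, because the $g_i$ have constant coefficients, for $T$ Artinian local over $B'$ with residue field $k$ the thickening $\widetilde Z_T$ is supported on the same closed subset of $\cotsp{X'/k}$ as $\widetilde Z_\chi$, so it embeds into that formal neighbourhood; the formal splitting therefore restricts to a splitting over $\widetilde Z_T$, and after correcting it by a line bundle on $\widetilde Z_T$ lifting the comparison with a prescribed splitting over $\widetilde Z_{T_0}$ (possible by the previous paragraph), one gets the required lift. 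This verifies the infinitesimal lifting criterion at closed points, which suffices as $B'$ is of finite type over $k$; so $\bfS\to B'$ is smooth, hence smooth surjective, i.e.\ an fppf cover, and combined with the simply transitive action, $\bfS$ is a $\catPic_{\widetilde Z/B'}$-torsor.

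I expect the main obstacle to be this smoothness/local-triviality step: the obstruction to lifting a splitting along a square-zero extension lies a priori in $H^2(\widetilde Z_{T_0},\scrO_{\widetilde Z_{T_0}}\otimes_{\scrO_{T_0}}I)$ (computed by Morita from $\cEnd_{\iota^\ast\scrD}(P_0)\simeq\scrO_{\widetilde Z_{T_0}}$), which need not vanish for $d\ge 2$; its vanishing is forced precisely by combining the formal splittings of Corollary~\ref{cor:splits-in-general} with the product decomposition $\widetilde Z=X'\times_k W$. Everything else in the argument is either formal or a reference.
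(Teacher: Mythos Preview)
Your proof is correct and shares the paper's skeleton—algebraicity from \cite[Thm.~1.5]{zbMATH05049029} using properness and flatness of $\widetilde{Z}/B'$, smoothness and surjectivity from Corollary~\ref{cor:splits-in-general}, and the torsor structure from the $\bbG_{\rmm}$-gerbe—but you handle the formal-smoothness step more carefully and with an extra ingredient. The paper simply writes that it ``suffices to show that $\scrD_{X/k}$ splits over the formal neighbourhood of $\widetilde Z_\chi$,'' leaving two points implicit: first, why $\widetilde{Z}_T$ for Artinian $T$ over $\chi$ maps into that formal neighbourhood inside $\cotsp{X'/k}$; second, why having \emph{some} splitting over every such $T$ suffices to lift a \emph{prescribed} splitting along $T_0\hookrightarrow T$. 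You make both explicit: the first via the observation that the defining polynomials $g_i$ have coefficients in $\scrO(B')$ (constant along $X'$), so over Artinian $T$ they become nilpotent perturbations of the $g_i^{(\chi)}$; the second by independently proving that $\catPic_{\widetilde{Z}/B'}\to B'$ is smooth, using the product decomposition $\widetilde{Z}=X'\times_k W$ with $W/B'$ finite locally free together with smoothness of $\catPic_{X'/k}$, so that the comparison line bundle always lifts. The paper's argument tacitly relies on this same smoothness of $\catPic_{\widetilde{Z}/B'}$; your route makes the dependence visible and self-contained, at the cost of a slightly longer argument.
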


\begin{proof}
    Recall Remark~\ref{rmk:can-be-empty-non-flat} that $\widetilde{Z}$ is
    \emph{proper} and \emph{flat} over $B'$. So according to
    \cite[Thm.~1.5]{zbMATH05049029}, the Weil restriction
    $\bfS:=\Res_{\widetilde{Z}/B'}(\bfS_{\scrD}|_{\widetilde{Z}})$ is
    \emph{algebraic} and \emph{locally of finite presentation}. Hence, according
    to \cite[\SPtag{0DP0}]{stacks-project}, to show that $\bfS/B'$ is smooth it
    suffices to show that $\bfS/B'$ is formally smooth. Note that $B'$ is
    (locally) noetherian, $\bfS/B'$ is locally of finite type, and $k$ is
    algebraically closed, then according to \cite[\SPtag{02HY}]{stacks-project},
    it suffices to show that the Azumaya algebra $\scrD_{X/k}$ splits over the
    formal neighbourhood $Z_\chi$ for all $\chi \in B'(k)$, which is exactly
    Corollary~\ref{cor:splits-in-general}. The surjectivity also follows.
    Therefore, \'etale locally on $B'$, $\bfS$ admits a section, or more
    precisely, there is an \'etale surjective morphism $U \to B'$, such that
    $\bfS(U)$ is non-empty. Since $\bfS_{\scrD}|_{\widetilde{Z}}$ is a
    $\bbG_{\rmm,\widetilde{Z}}$-gerbe, i.e., a $\rmB \bbG_{\rmm,
    \widetilde{Z}}$-torsor, so
    $\bfS:=\Res_{\widetilde{Z}/B'}(\bfS_{\scrD}|_{\widetilde{Z}})$ is a pseudo
    $\Res_{\widetilde{Z}/B'}(\rmB \bbG_{\rmm, \widetilde{Z}})$-torsor, i.e., a
    pseudo $\catPic_{\widetilde{Z}/B'}$-torsor. The existence of an \'etale
    local section implies that it is actually a torsor. Hence $\bfS$ is an
    $\catPic_{\widetilde{Z}/B'}$-torsor.
\end{proof}

\subsection{Main result}
Recall Proposition~\ref{prop:stack-of-splittings-is-algebraic} we have an
$\catPic_{\widetilde{Z}/B'}$-torsor
\[
    \bfS:=\Res_{\widetilde{Z}/B'} (\bfS_{\scrD}|_{\widetilde{Z}}).
\]
Note moreover that, via the identifications in
Proposition~\ref{prop:Higgs-spectral-cover} and
Proposition~\ref{prop:local-system-spectral-cover}, tensor products define
actions
\begin{align*}
    \catPic_{\widetilde{Z}/B'} \times_{B'} \catHiggs_{X'/k,r}
    \longrightarrow \catHiggs_{X'/k, r},\\
    \quad\text{and}\quad
    \catPic_{\widetilde{Z}/B'} \times_{B'} \catLoc_{X/k,r}
    \longrightarrow \catLoc_{X/k, r}
\end{align*}
of $\catPic_{\widetilde{Z}/B'}$ on $\catHiggs_{X'/k,r}$ and $\catLoc_{X/k,r}$
respectively over $B'$. Verifications of such actions are well defined, in
particular on $\catLoc_{X/k, r}$, are similar to the arguments in
\S\ref{sec:splitting-principle} (cf.~\cite[Proposition~3.5]{zbMATH06526258}).
The formulation of the following theorem is inspired by that of
\cite[Theorem~1.2, Remark~3.13]{zbMATH06526258}.

\begin{theorem} \label{thm:main}
    There is a $\catPic_{\widetilde{Z}/B'}$-equivariant isomorphism of stacks
    \[
        C\inv_{X/k}: \bfS \times^{\catPic_{\widetilde{Z}/B'}} \catHiggs_{X'/k,
        r} \longrightarrow \catLoc_{X/k, r}
    \]
    over $B'$. In particular, there is an \'etale surjective morphism $U\to
    \bfB'$, such that
    \[
        \catHiggs_{X'/k, r}\times_{B'} U  \simeq \catLoc_{X/k, r} \times_{B'} U.
    \]
\end{theorem}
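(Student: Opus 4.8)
The plan is to realise $C^{-1}_{X/k}$ as the descent, along the $\catPic_{\widetilde Z/B'}$-torsor $\bfS\to B'$, of the Morita equivalence of the Splitting Principle (Theorem~\ref{thm:splitting-principle}) performed universally over $\bfS$. Since $\bfS=\Res_{\widetilde Z/B'}(\bfS_{\scrD}|_{\widetilde Z})$, over the total space $\bfS$ there is a \emph{tautological} splitting $P^{\mathrm{univ}}$ of (the pullback of) $\scrD_{X/k}$ over $\widetilde Z\times_{B'}\bfS$; restricting it along the closed immersion $Z\times_{B'}\bfS\hookrightarrow\widetilde Z\times_{B'}\bfS$ gives a splitting of $\scrD_{X/k}$ over $Z\times_{B'}\bfS$. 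This is exactly where the larger cover is needed: $\widetilde Z/B'$ is proper and flat (Example~\ref{exa:computation-av}) and globally split (Corollary~\ref{cor:splits-in-general}), whereas $Z/B'$ is neither --- yet any splitting over $\widetilde Z_\chi$ restricts to one over $Z_\chi$. I would then run the construction in the proof of Theorem~\ref{thm:splitting-principle} relative to $\bfS$: regard a rank-$r$ Higgs bundle over $X'$ base-changed to $\bfS$ as an $\scrO_{Z_\chi}$-module $M$ via Proposition~\ref{prop:Higgs-spectral-cover}, tensor it with $P^{\mathrm{univ}}|_{Z\times_{B'}\bfS}$, and recognise the result, via Proposition~\ref{prop:local-system-spectral-cover} (recall $\scrU(\cDer_{X/k})=\scrD_{X/k}$, and that the characteristic polynomial passes from $\chi$ to $\chi^{p^d}$ as established there), as a rank-$r$ local system over $X$ base-changed to $\bfS$. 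As every ingredient --- Morita theory, the projection formula, the invertibility of the splitting module (Theorem~\ref{thm:azumaya-property}.\ref{lem:is-invertible}, valid for any splitting over $Z_\chi$), the characteristic-polynomial bookkeeping --- is compatible with arbitrary base change, this produces a morphism of stacks over $\bfS$
\[
    \widetilde C^{-1}\colon\catHiggs_{X'/k,r}\times_{B'}\bfS\longrightarrow\catLoc_{X/k,r}\times_{B'}\bfS
\]
which is an equivalence by the fibrewise analysis of Theorem~\ref{thm:splitting-principle}.

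Write $\Phi_P(M):=P|_Z\otimes M$ for the tensoring functor attached to a splitting $P$ over $\widetilde Z_\chi$. Since the $\catPic_{\widetilde Z/B'}$-torsor structure on $\bfS$ is ``translation by line bundles'', $N\cdot P=P\otimes N$, and since the $\catPic_{\widetilde Z/B'}$-actions on $\catHiggs_{X'/k,r}$ and $\catLoc_{X/k,r}$ recalled before the theorem are $M\mapsto N|_{Z_\chi}\otimes M$ on the corresponding $Z_\chi$-modules (well defined because $Z_\chi/X'_T$ is \emph{finite}, so a line bundle on $Z_\chi$ is Zariski-locally on $X'_T$ trivial by Proposition~\ref{prop:affine-morphisms}.\ref{prop:affine-morphisms-3}), one has the canonical identifications
\[
    \Phi_{N\cdot P}(M)\;\cong\;\Phi_P(N\cdot M)\;\cong\;N\cdot\Phi_P(M).
\]
The first says $(P,M)\mapsto\Phi_P(M)$ is invariant for the diagonal action defining the balanced product, hence descends to a morphism $C^{-1}_{X/k}\colon\bfS\times^{\catPic_{\widetilde Z/B'}}\catHiggs_{X'/k,r}\to\catLoc_{X/k,r}$ over $B'$; the second shows $C^{-1}_{X/k}$ is $\catPic_{\widetilde Z/B'}$-equivariant for the residual action. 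It is an isomorphism because its pullback along the smooth surjective cover $\catLoc_{X/k,r}\times_{B'}\bfS\to\catLoc_{X/k,r}$ (a base change of $\bfS\to B'$, smooth and surjective by Proposition~\ref{prop:stack-of-splittings-is-algebraic}) is, after the canonical trivialisation of the associated bundle over the torsor, identified with $\widetilde C^{-1}$.

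Finally, for the last assertion Proposition~\ref{prop:stack-of-splittings-is-algebraic} gives an \'etale surjection $U\to B'$ over which $\bfS$ admits a section, so $\bfS\times_{B'}U\simeq\catPic_{\widetilde Z/B'}\times_{B'}U$ as torsors; this trivialisation gives $(\bfS\times^{\catPic_{\widetilde Z/B'}}\catHiggs_{X'/k,r})\times_{B'}U\simeq\catHiggs_{X'/k,r}\times_{B'}U$, and composing with $C^{-1}_{X/k}$ base-changed to $U$ yields $\catHiggs_{X'/k,r}\times_{B'}U\simeq\catLoc_{X/k,r}\times_{B'}U$. I do not anticipate a single deep obstacle --- the Azumaya property, the smoothness and torsor structure of $\bfS$, and the splitting over $\widetilde Z$ are all in hand --- but the main work is organisational: checking that the pointwise equivalences of Theorem~\ref{thm:splitting-principle} really assemble into a morphism \emph{of stacks} over $\bfS$, and that the displayed identifications hold strictly enough to descend; both come down to the naturality of the Morita construction in the base and the evident compatibility of tensor products on the spectral-cover side.
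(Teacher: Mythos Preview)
Your proposal is correct and follows essentially the same approach as the paper: both construct $C^{-1}_{X/k}$ by restricting a splitting on $\widetilde Z_\chi$ to $Z_\chi$ and then applying the Morita/splitting-principle tensor construction of Theorem~\ref{thm:splitting-principle}, check $\catPic_{\widetilde Z/B'}$-equivariance via the evident tensor identities, and deduce the \'etale-local statement from the section of $\bfS$ provided by Proposition~\ref{prop:stack-of-splittings-is-algebraic}. Your write-up is in fact more explicit than the paper's about the descent/torsor bookkeeping (the paper simply defines the map on $T$-points and asserts equivariance and the isomorphism ``follow directly''), but the underlying argument is the same.
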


\begin{proof}
    The first statement follows from the splitting principle described in
    \S\ref{sec:splitting-principle} and Corollary~\ref{cor:splits-in-general}.
    In fact the map is given as follows. For any $\chi \in B'(T)$, denote by
    \[\begin{tikzcd}
            \iota:Z_{\chi} \ar[r, closed, "\gamma"] &
            \widetilde{Z}_{\chi} \ar[r, closed, "\tilde\iota"] &
            \cotsp{X'_T/T},
    \end{tikzcd}\]
    the inclusions. For any object $(E, \theta)$ in $\catHiggs_{X'/k}(T)$,
    consider via Proposition~\ref{prop:Higgs-spectral-cover} the quasi-coherent
    sheaf $\tilde E$ on $Z_\chi$. Any object
    \[
        (\chi: T \to B', P,
        \alpha: \tilde\iota^\ast\scrD_{X_T/T} \simeq
        \cEnd_{\scrO_{\widetilde{Z}_\chi}}(P))
    \]
    in $\bfS(T)$ defines a splitting module $\gamma^\ast P$ of $\iota^\ast
    \scrD_{X_T/T}$ on $Z_\chi$. Then the $\scrO_{X_T}$-module
    \[
        C_{X/k}\inv(E):= \widetilde{(\pi_T\circ \iota)_\ast(\widetilde{E}
        \otimes_{\scrO_{Z_\chi}} \gamma^\ast P)}.
    \]
    with the notation as in \S\ref{sec:splitting-principle}, is an
    $D_{X_T/T}$-module, i.e., an object in $\catLoc_{X/S}(T)$. Then clearly the
    assignment $((\chi, P, \alpha), (E, \theta)) \mapsto C_{X/k}\inv(E)$ defines
    a $\catPic_{\widetilde{Z}/B'}$-equivariant map $\bfS
    \times^{\catPic_{\widetilde{Z}/B'}} \catHiggs_{X'/k, r} \to \catLoc_{X/k,
    r}$. This is an isomorphism follows directly from the discussion in
    \S\ref{sec:splitting-principle}. The second part follows from Proposition
    \ref{prop:stack-of-splittings-is-algebraic} that there is an \'etale cover
    $U \to B'$, such that $\bfS \times_{B'} U \simeq
    \catPic_{\widetilde{Z}_\chi/B'} \times_{B'} U$.
\end{proof}

\printbibliography[notcategory={sigles}]
\printbibliography[category = {sigles}, title={Abbreviation}, heading =
subbibliography]

\end{document}